\title[Controlling characteristics while collapsing cardinals]{Controlling classical cardinal characteristics while collapsing cardinals}
\author[M. Goldstern]{Martin Goldstern}
\address{Institut f\"ur Diskrete Mathematik und Geometrie, Technische Universit\"at Wien, Wiedner Hauptstrasse 8-10/104, 1040 Vienna, Austria.}
\email{goldstern@tuwien.ac.at}
\urladdr{http://www.tuwien.ac.at/goldstern/}
\author[J. Kellner]{Jakob Kellner}
\address{Institut f\"ur Diskrete Mathematik und Geometrie, Technische Universit\"at Wien, Wiedner Hauptstrasse 8-10/104, 1040 Vienna, Austria.}
\email{kellner@fsmat.at}
\urladdr{http://dmg.tuwien.ac.at/kellner/}
\author[D. Mej\'{i}a]{Diego A. Mej\'{i}a}
\address{Creative Science Course (Mathematics), Faculty of Science, Shizuoka University, Ohya 836, Suruga-ku, Shizuoka-shi, Japan 422-8529.}
\email{diego.mejia@shizuoka.ac.jp}
\urladdr{http://www.researchgate.com/profile/Diego\_Mejia2}
\author[S. Shelah]{Saharon Shelah}
\address{Einstein Institute of Mathematics, Edmond J. Safra Campus, Givat Ram, The Hebrew University of Jerusalem, Jerusalem, 91904, Israel, and Department of Mathematics, Rutgers University, New Brunswick, NJ 08854, USA.}
\email{shlhetal@math.huji.ac.il}
\urladdr{http://shelah.logic.at}
\thanks{This work was supported by the following grants:
Austrian Science Fund (FWF): project number I3081
(first author) and P30666
(second author);
Grant-in-Aid for Early Career Scientists 18K13448, Japan Society for the Promotion of Science (third author); Israel Science Foundation (ISF) grant no: 1838/19 (fourth author). This is publication number E87 of the fourth author.}
\subjclass[2010]{03E17, 03E35, 03E40}
\newcommand{\LCU}{\textnormal{LCU}}
\newcommand{\COB}{\textnormal{COB}}
\DeclareMathOperator{\add}{add}
\DeclareMathOperator{\cov}{cov}
\DeclareMathOperator{\non}{non}
\DeclareMathOperator{\cof}{cof}
\DeclareMathOperator{\cf}{cof}
\DeclareMathOperator{\crit}{cr}
\DeclareMathOperator{\col}{Coll}
\DeclareMathOperator{\dom}{dom}
\newcommand{\Null}{\mathcal N}
\newcommand{\Meager}{\mathcal M}
\newcommand{\addN}{{\ensuremath{\add(\Null)}}}
\newcommand{\cofN}{{\ensuremath{\cof(\Null)}}}
\newcommand{\covN}{{\ensuremath{\cov(\Null)}}}
\newcommand{\nonN}{{\ensuremath{\non(\Null)}}}
\newcommand{\addM}{{\ensuremath{\add(\Meager)}}}
\newcommand{\cofM}{{\ensuremath{\cof(\Meager)}}}
\newcommand{\covM}{{\ensuremath{\cov(\Meager)}}}
\newcommand{\nonM}{{\ensuremath{\non(\Meager)}}}
\newcommand{\cfrak}{\mathfrak{c}}
\newcommand{\bfrak}{\mathfrak{b}}
\newcommand{\dfrak}{\mathfrak{d}}
\newcommand{\gfrak}{\mathfrak{g}}
\newcommand{\hfrak}{\mathfrak{h}}
\newcommand{\mfrak}{\mathfrak{m}}
\newcommand{\pfrak}{\mathfrak{p}}
\newcommand{\tfrak}{\mathfrak{t}}
\newcommand{\xfrak}{\mathfrak{x}}
\newcommand{\precal}{\textnormal{cal}}
\newcommand{\plike}{$\mathfrak{t}$-like}
\newcommand{\tlike}{\plike}
\newcommand{\mlike}{$\mathfrak{m}$-like}
\newcommand{\hlike}{$\mathfrak{h}$-like}
\newcommand{\vA}{\textnormal{\texttt{vA}}}
\newcommand{\vB}{\textnormal{\texttt{vB}}}
\newcommand{\la}{\langle}
\newcommand{\ra}{\rangle}
\newcommand{\Qhor}{Q^2}
\theoremstyle{plain}
  \newtheorem{theorem}[equation]{Theorem}
  \newtheorem{corollary}[equation]{Corollary}
  \newtheorem{lemma}[equation]{Lemma}
   \newtheorem{fact}[equation]{Fact}
\theoremstyle{definition}
  \newtheorem{definition}[equation]{Definition}
  \newtheorem{example}[equation]{Example}
  \newtheorem{remark}[equation]{Remark}
  \newtheorem{remarks}[equation]{Remark}
  \newtheorem{notation}[equation]{Notation}
  \newtheorem{assumption}[equation]{Assumption}
\numberwithin{equation}{section}
\begin{document}
\begin{abstract}
    Given a forcing notion $P$ that forces certain values to
    several classical cardinal characteristics of the reals,
    we show how we can compose $P$ with a collapse (of a cardinal $\lambda>\kappa$
    to $\kappa$) such that the composition still
    forces the previous values to these characteristics.

    We also show how to force distinct values to $\mathfrak m$,
    $\mathfrak p$ and $\mathfrak h$ and also  keeping 
    all the values in Cicho\'n's diagram distint,  using the Boolean Ultrapower method.
    (In our recent paper \emph{Controlling cardinal characteristics without adding reals}
    the same was done for the newer Cicho\'n's Maximum construction, which avoids large cardinals.)
\end{abstract}
\maketitle


\section*{Introduction}

Cicho\'n's diagram (see Figure~\ref{fig:cichon})
lists ten  cardinal characteristics of the continuum,
which we will call \emph{Cicho\'n-characteristics}
(where we ignore the two
``dependent'' characteristics $\addM=\min(\covM,\bfrak)$ and $\cofM=\max(\nonM,\dfrak)$).

In many constructions that force given values
to such characteristics we actually get something stronger,
which we call ``strong witnesses'' (the objects $\bar f$ and $\bar g$ in Definition~\ref{def:lcu.cob}).

In this paper, we show how to collapse cardinals 
while preserving the strongly witnessed values for 
Cicho\'n-characteristics 
(and certain other types of characteristics).

With \emph{Cicho\'n's Maximum} we denote the statement
``all
Cicho\'n-characteristics (including $\aleph_1$ and the continuum) are pairwise different''.
In~\cite{GKMS2} we show how to force Cicho\'n's Maximum
(without using large cardinals).

In~\cite{GKMS1} we investigate how to preserve and how to change classical
cardinal characteristics of the continuum in NNR extensions, i.e., extensions that do not add reals; and we show how this gives 13 pairwise different ones: ten from Cicho\'n's Maximum,
plus $\mfrak$, $\pfrak$ and $\hfrak$ (see Definition~\ref{DefChar1}). This construction is
based on~\cite{GKMS2} (and accordingly does not use large cardinals).

The original Cicho\'n's Maximum construction~\cite{GKS}
uses Boolean ultrapowers (which makes large cardinals necessary).
It turns out that it is possible to add
$\mfrak$, $\pfrak$ and $\hfrak$ to this construction as well (see Figure~\ref{fig:result}); and as this construction seems 
interesting in its own right, we give the details in this paper.
(But note that the result of~\cite{GKMS1} is stronger in the sense
that we do not require large cardinals there; 
the advantage of the result in this paper is that 
we can obtain singular values for $\covM$ and $\dfrak$
in certain circumstances.)

Annotated Contents:

We will briefly review the Boolean ultrapower constructions
in \textbf{Section~\ref{sec:prelim}}.
We also describe how we can start with alternative initial forcings
(for the left hand side of Cicho\'n's diagram), which for example allow us to get Cicho\'n's Maximum plus distinct values for $\mfrak$, $\pfrak$ and $\hfrak$, and allows two Cicho\'n-characteristics to be singular, namely, $\cfrak$ and either $\covM$ or $\dfrak$ (the latter when the Cicho\'n's Maximum construction is based on~\cite{diegoetal}). This contrasts the constructions in~\cite{GKMS1} without large cardinals, where only $\cfrak$ is allowed to be singular.

Part of the following Sections
are parallel to~\cite{GKMS1}, and we will
regularly refer to 
that paper; this applies
in particular to \textbf{Section~\ref{sec:nnr}} (and parts of Subsection~\ref{subsec:blass}),
where we describe some classes of cardinal characteristics,
and their behaviour under no-new-reals extension.

In \textbf{Section~\ref{sec:appl}} we show
how to add
$\mfrak$, $\pfrak$ and $\hfrak$ to the Boolean ultrapower construction.

Also, the Boolean ultrapower method produces large gaps between the Cicho\'n values of the left hand side:
the $\kappa_i$ in Figure~\ref{fig:1022} are strongly compact (in the ground model; so as cofinalities are preserved they are still weakly inaccessible in the extension).

This was the original motivation for the 
main result of this paper:
In \textbf{Section~\ref{sec:coll}} we show how we can 
collapse cardinals while keeping values for characteristics
that are either strongly witnessed or small.
(In particular, we can get rid of the gaps 
that necessarily arise in the Boolean ultrapower construction.)


\newcommand{\mye}{*+[F.]{\phantom{\lambda}}}
\begin{figure}
  \centering
\[
\xymatrix@=4.5ex{
&            \covN\ar[r] & \nonM \ar[r]      &  \mye \ar[r]     & \cofN\ar[r] &2^{\aleph_0} \\
&                               & \mathfrak b\ar[r]\ar[u]  &  \mathfrak d\ar[u] &              \\
  \aleph_1\ar[r] & \addN\ar[r]\ar[uu] & \mye\ar[r]\ar[u] &  \covM\ar[r]\ar[u]& \nonN\ar[uu]
}
\]
    \caption{\label{fig:cichon}Cicho\'n's diagram with the
two ``dependent'' values removed, which are
$\addM=\min(\mathfrak b, \covM)$
and $\cofM=\max(\nonM,\mathfrak d)$.
An arrow $\mathfrak x\rightarrow \mathfrak y$ means that
ZFC proves $\mathfrak x\le \mathfrak y$.}
\end{figure}

\begin{figure}
  \centering
\[
\xymatrix@=3.5ex{
&&&&            \covN\ar[rdd] & \nonM \ar[rdd]      &  \mye\ar@{=}[d]\ar[ddr]      & \cofN\ar[r] &2^{\aleph_0} \\
&&&&                               & \mathfrak b\ar[u]  &  \mathfrak d &              \\
\aleph_1\ar[r] & \mfrak\ar[r] & \pfrak\ar[r]&\hfrak\ar[r]
& \addN\ar[uu] & \mye\ar@{=}[u] &  \covM\ar[u]& \nonN\ar[uu]
}
\]
    \caption{\label{fig:result}The model we construct in this paper; here $\mathfrak x\rightarrow \mathfrak y$ means that $\mathfrak x<\mathfrak y$ (when $\hfrak$ is omitted, any number  of the $<$ signs can be replaced by $=$ as desired).
    \protect\\
    This model corresponds to ``Version A'' (\ref{versionA}, Fig.~\ref{fig:1022}). We also realise another ordering of the Cicho\'{n} values, called ``Version B'' (\ref{versionB}, Fig.~\ref{fig:1131}).}
\end{figure}

\section{Preliminaries}\label{sec:prelim}
\subsection{The characteristics}
In addition to the Cicho\'n-characteristics
we will consider  the following ones, whose definitions are well known.
\begin{definition}\label{DefChar1}
    Let $\mathcal{P}$ be a class of forcing notions.
    \begin{enumerate}[(1)]
        \item $\mfrak(\mathcal{P})$ denotes the minimal cardinal where Martin's axiom for the posets in $\mathcal{P}$ fails. More explicitly, it is the minimal $\kappa$ such that, for some poset $Q\in\mathcal{P}$, there is a collection $\mathcal{D}$ of size $\kappa$ of dense subsets of $Q$ such that there is no filter in $Q$ intersecting all the members of $\mathcal{D}$.
        \item $\mfrak:=\mfrak(\textnormal{ccc})$.
        \item Write $a\subseteq^* b$ iff $a\smallsetminus b$ is finite. Say that $a\in[\omega]^{\aleph_0}$ is a \emph{pseudo-intersection} of $F\subseteq[\omega]^{\omega}$ if $a\subseteq^* b$ for all $b\in F$.
        \item The \emph{pseudo-intersection number $\pfrak$} is the smallest size of a filter base of a free filter on $\omega$ that has no pseudo-intersection in $[\omega]^{\aleph_0}$.
        \item The \emph{tower number} $\tfrak$ is the smallest order type of a $\subseteq^*$-decreasing sequence in $[\omega]^{\aleph_0}$ without pseudo-intersection.
        \item The \emph{distributivity number} $\hfrak$ is the smallest size of a collection of dense subsets of $([\omega]^{\aleph_0},\subseteq^*)$ whose intersection is empty.
        \item A family $D\subseteq[\omega]^{\aleph_0}$ is \emph{groupwise dense} if
        \begin{enumerate}[(i)]
            \item $a\subseteq^* b$ and $b\in D$ implies $a\in D$, and
            \item whenever $(I_n:n<\omega)$ is an interval partition of $\omega$, there is some $a\in[\omega]^{\aleph_0}$ such that $\bigcup_{n\in a}I_n\in D$.
        \end{enumerate}
        The \emph{groupwise density number $\gfrak$} is the smallest size of a collection of groupwise dense sets whose intersection is empty.
    \end{enumerate}
\end{definition}
It is well known 
that ZFC proves the following:
(See e.g.\ Blass~\cite{Blass}, and, for $\pfrak=\tfrak$, \cite{MSpt}, with Malliaris.)
\begin{equation}
    \label{eq:uqw523}
    \begin{gathered}
\mfrak\leq\pfrak=\tfrak\leq\hfrak\leq\gfrak,\quad \mfrak\leq\addN,\quad
\tfrak\leq\addM,\quad
\hfrak\leq\bfrak,\quad
\gfrak\leq\dfrak,\\
2^{<\tfrak}=\cfrak\text{ and }\cof(\cfrak)\geq\gfrak,
    \end{gathered}
\end{equation}
and all these cardinals are regular, with
the possible exception of $\mfrak$, $\dfrak$ and $\cfrak$.

\subsection{The old constructions}
In this paper, we will build on two
constructions from \cite{GKS,diegoetal} and \cite{KeShTa:1131}, which we call the ``old constructions'' and refer
to as \ref{versionA} and \ref{versionB}, respectively.
They force different values to several (or all) entries
of Cicho\'n's diagram.
We will not describe these constructions in detail, but refer to
the respective papers instead.

The ``basic versions'' of the constructions do not require
large cardinals and give
us different values for the ``left hand side'':

\begin{theorem}\label{oldnonba}
Assume that $\aleph_1\leq\lambda_1\leq\lambda_2\leq\lambda_3\leq\lambda_4$ are regular cardinals and $\lambda_4\leq\lambda_5\leq\lambda_6$.\smallskip

    \noindent\cite{diegoetal} If $\lambda_5$
    is a regular cardinal and $\lambda_6^{<\lambda_3}=\lambda_6$, then there is a f.s.\  iteration $\bar P^{\vA}$ of length of size $\lambda_6$ with cofinality $\lambda_4$, using iterands that are $(\sigma,k)$-linked for every $k\in\omega$, which forces
    \begin{multline}
    \tag{\vA}\label{versionAnb}
        \addN=\lambda_1,\ \covN=\lambda_2,\ \mathfrak{b}=\lambda_3,\ \nonM=\lambda_4,\\
        \covM=\lambda_5,\text{\ and } \mathfrak{d}=\mathfrak{c}=\lambda_6.
    \end{multline}

    \noindent\cite{KeShTa:1131} If $\lambda_5=\lambda_5^{<\lambda_4}$ and either $\lambda_2=\lambda_3$,\footnote{The result for the case $\lambda_2=\lambda_3$ is easily obtained with techniques from Brendle~\cite{Br}.} or $\lambda_3$ is $\aleph_1$-inaccessible,\footnote{A cardinal $\lambda$ is \emph{$\kappa$-inaccessible} if $\mu^\nu<\lambda$ for any $\mu<\lambda$ and $\nu<\kappa$.} $\lambda_2=\lambda_2^{<\lambda_2}$ and $\lambda_4^{\aleph_0}=\lambda_4$, then there is a f.s.\  iteration $\bar P^{\vB}$ of length of size $\lambda_5$ with cofinality $\lambda_4$, using iterands that are $(\sigma,k)$-linked for every $k\in\omega$, that forces
    \begin{multline}
    \tag{\vB}\label{versionBnb}
        \addN=\lambda_1,\ \mathfrak{b}=\lambda_2,\ \covN=\lambda_3,\ \nonM=\lambda_4,\\ \text{\ and }
        \covM=\mathfrak{c}=\lambda_5.
    \end{multline}
\end{theorem}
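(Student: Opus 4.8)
Both items are, respectively, the main theorems of \cite{diegoetal} and \cite{KeShTa:1131}, so the plan is to cite them; here I only indicate the common shape of the argument and where the arithmetic hypotheses enter. In each case $\bar P$ is built as a finite support c.c.c.\ iteration $\langle P_\alpha,\dot Q_\alpha:\alpha<\delta\rangle$ whose length $\delta$ has cofinality $\lambda_4$ and whose iterands are, at each step, one of finitely many ``building blocks'' --- subalgebras of Cohen forcing, the eventually different real forcing, localisation- and amoeba-type forcings, Hechler-type forcings --- each $(\sigma,k)$-linked for all $k$, chosen by a bookkeeping function. First one reads off $\cfrak\le\lambda_6$ (resp.\ $\cfrak\le\lambda_5$) by nice-name counting, using $\lambda_6^{<\lambda_3}=\lambda_6$ (resp.\ $\lambda_5=\lambda_5^{<\lambda_4}$ together with $\lambda_4^{\aleph_0}=\lambda_4$); hence every Cicho\'n-characteristic is $\le\lambda_6$ (resp.\ $\le\lambda_5$).

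For the remaining values $\lambda_1\le\dots\le\lambda_4$ (and $\lambda_5$ in \ref{versionB}) one argues in the usual two-sided way. The lower bounds come from strong witnesses in the sense of Definition~\ref{def:lcu.cob}: for each value $\lambda_i$ that is to be the exact value of a characteristic $\xfrak$, one constructs cofinally in the iteration a system $\bar f=\langle f_\alpha:\alpha<\lambda_i\rangle$ of reals such that cofinally often an iterand adds an $f_\alpha$ dominating/avoiding/escaping all the $f_\beta$, $\beta<\alpha$, already added --- an $\LCU$ (or dually $\COB$) configuration, which forces $\xfrak\ge\lambda_i$. The matching upper bounds combine a reflection argument --- any family of size $<\lambda_4$ appears at a bounded stage, since $\lambda_4$ is regular and equals $\mathrm{cf}(\delta)$, and is then dealt with by the design of the iteration --- with preservation theorems: preservation of an unbounded family, of a non-meager set, of a family of positive outer measure, of a localisation (slalom) configuration, etc., passing through finite support limits. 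It is exactly to make these preservation properties survive finite support limits of uncountable cofinality that one needs the iterands to be $(\sigma,k)$-linked for all $k$ (and, in \cite{diegoetal}, to enjoy the finer ``Fr-linked''/ultrafilter-limit structure).

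The genuinely new point over the classical Cicho\'n-maximum iteration of \cite{GKS} is realising the four left-hand values $\lambda_1\le\lambda_2\le\lambda_3\le\lambda_4$ simultaneously below $\lambda_4\le\lambda_5\le\lambda_6$. For \ref{versionA} this is where the ultrafilter-limit machinery of \cite{diegoetal} is used: one iterates without destroying a fixed ultrafilter, which pins $\nonM$ (hence the separation of $\covN,\bfrak,\nonM$) down to $\lambda_4$ while $\covM$ and $\dfrak$ are pushed up to $\lambda_5$ and $\lambda_6$ by interleaved Cohen- and Hechler-type steps; the assumptions ``$\lambda_5$ regular'' and $\lambda_6^{<\lambda_3}=\lambda_6$ are what guarantee the iteration has the stated length and size and that the bookkeeping can enumerate all relevant names. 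For \ref{versionB} the construction of \cite{KeShTa:1131} interleaves the blocks in a different order to obtain \eqref{versionBnb}, and the hypotheses $\lambda_5=\lambda_5^{<\lambda_4}$, $\lambda_2=\lambda_2^{<\lambda_2}$, $\lambda_4^{\aleph_0}=\lambda_4$, and --- for the separation of $\covN$ from $\bfrak$ --- either $\lambda_2=\lambda_3$ (handled by Brendle's ``block of randoms'' argument \cite{Br}) or $\aleph_1$-inaccessibility of $\lambda_3$, are used precisely so that the name-counting and the linkedness bookkeeping fit together. I expect the main obstacle to be exactly this last ingredient: arranging that all of the preservation properties above hold for one and the same iteration, with the linkedness bookkeeping for all of them mutually compatible --- which is why the right course here is to quote \cite{diegoetal} and \cite{KeShTa:1131} rather than redo their work.
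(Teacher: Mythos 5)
Your proposal takes exactly the paper's route: Theorem~\ref{oldnonba} is quoted from \cite{diegoetal} and \cite{KeShTa:1131} without proof, and your sketch of the underlying machinery (nice-name counting for the upper bound on $\cfrak$, strong $\LCU$/$\COB$ witnesses for the lower bounds, goodness-type preservation through finite support limits of $(\sigma,k)$-linked iterands, ultrafilter/FAM-limit arguments to keep $\bfrak$ small) is consistent with how those papers argue. One small caveat: the hypotheses for \eqref{versionBnb} as stated here are weaker than in the original \cite{KeShTa:1131} (in particular GCH is not assumed), so a bare citation of that paper does not literally yield the statement; the paper covers this in Remark~\ref{rem:sing} by referring to the modifications in \cite{modKST}.
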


These consistency results correspond to $\lambda_1$--$\lambda_6$
of Figure~\ref{fig:1022}, and to $\lambda_1$--$\lambda_5$
of Figure~\ref{fig:1131}, respectively.

\begin{figure}
  \centering
  \begin{minipage}[t]{0.49\textwidth}
\[
\xymatrix@=3.5ex{
&            \lambda_2\ar[r]\ar@{.}[rd]|-{\kappa_7}        & \lambda_4 \ar[r]      &  \mye \ar[r]     & \lambda_8\ar[r] &\lambda_9 \\
&                               & \lambda_3\ar[r]\ar[u]  &  \lambda_6\ar[u] &              \\
  \aleph_1\ar[r]_{\kappa_9} & \lambda_1\ar[r]\ar[uu]^{\kappa_8} & \mye\ar[r]\ar[u] &  \lambda_5\ar[r]\ar[u]& \lambda_7\ar[uu]
}
\]
    \caption{\label{fig:1022}The \ref{versionA} order.}
  \end{minipage}
  \begin{minipage}[t]{0.49\textwidth}
\[
\xymatrix@=3.5ex{
&            \lambda_3\ar[r]^{\kappa_6}        & \lambda_4 \ar[r]      &  \mye \ar[r]     & \lambda_8\ar[r] &\lambda_9 \\
&                               & \lambda_2\ar[r]\ar[u]\ar@{.}[lu]|-{\kappa_7}  &  \lambda_7\ar[u] &              \\
  \aleph_1\ar[r]_{\kappa_9} & \lambda_1\ar[r]\ar[uu]\ar@{.}[ru]|-{\kappa_8} & \mye\ar[r]\ar[u] &  \lambda_5\ar[r]\ar[u]& \lambda_6\ar[uu]
}
\]
    \caption{\label{fig:1131}The \ref{versionB} order.}
  \end{minipage}
\end{figure}

\begin{remark}\label{rem:sing}
  Note that the hypothesis for \ref{versionBnb} is weaker than the hypothesis in the  original reference \cite{KeShTa:1131}, even more, GCH is not assumed at all. This
  strengthening is a result of simple modifications, which are presented in~\cite{modKST}. Moreover, note that $\dfrak$ can be singular in~\ref{versionAnb}, while $\covM$ can be singular in~\ref{versionBnb} (also in the left-side models from~\cite{GMS,GKS}).
\end{remark}

Both constructions can then be extended with Boolean
ultrapowers (more precisely:   compositions of finitely many
successive Boolean ultrapowers), to make all values simultaneously different:

\begin{theorem}\label{oldba}
    Assume $\aleph_1<\lambda_1<\lambda_2<\lambda_3\leq\lambda_4\leq\lambda_5\leq\lambda_6\leq\lambda_7\leq\lambda_8\leq\lambda_9$.\smallskip

    \noindent\cite{diegoetal} If $\aleph_1<\kappa_9<\lambda_1<\kappa_8<\lambda_2<\kappa_7<\lambda_3$ such that
    \begin{enumerate}[(i)]
        \item for $j=7,8,9$, $\kappa_j$ is strongly compact and $\lambda_j^{\kappa_j}=\lambda_j$,
        \item $\lambda_i$ is regular for $i\neq 6$ and
        \item $\lambda_6^{<\lambda_3}=\lambda_6$,
    \end{enumerate}
    then there is a f.s.\  ccc iteration $P^{\vA*}$
(a Boolean ultrapower of $P^\vA$) that forces the constellation of Figure~\ref{fig:1022}:
\begin{multline}
\tag{\vA*}\label{versionA}
    \addN=\lambda_1,\ \covN=\lambda_2,\ \mathfrak{b}=\lambda_3,\ \nonM=\lambda_4,\\
        \covM=\lambda_5,\ \mathfrak{d}=\lambda_6,\ \nonN=\lambda_7,\ \cofN=\lambda_8,\text{\ and }\mathfrak{c}=\lambda_9.
\end{multline}

   \noindent\cite{KeShTa:1131} If $\aleph_1<\kappa_9<\lambda_1<\kappa_8<\lambda_2<\kappa_7<\lambda_3<\kappa_6<\lambda_4$ such that
    \begin{enumerate}[(i)]
        \item for $j=6,7,8,9$, $\kappa_j$ is strongly compact and $\lambda_j^{\kappa_j}=\lambda_j$,
        \item $\lambda_i$ is regular for $i\neq 5$,
        \item $\lambda_2^{<\lambda_2}=\lambda_2$, $\lambda_4^{\aleph_0}=\lambda_4$, $\lambda_5^{<\lambda_4}=\lambda_5$, and
        \item $\lambda_3$ is $\aleph_1$-inaccessible,
    \end{enumerate}
    then there is a f.s.\  ccc iteration $P^{\vB*}$
(a Boolean ultrapower of $P^\vB$)
that forces the constellation of Figure~\ref{fig:1131}:
\begin{multline}
\tag{\vB*}\label{versionB}
    \addN=\lambda_1,\ \mathfrak{b}=\lambda_2,\ \covN=\lambda_3,\ \nonM=\lambda_4,\\
        \covM=\lambda_5,\ \nonN=\lambda_6,\ \mathfrak{d}=\lambda_7,\ \cofN=\lambda_8,\text{\ and }\mathfrak{c}=\lambda_9.
\end{multline}
\end{theorem}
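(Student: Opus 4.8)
The plan is to build $P^{\vA*}$ (and, in parallel, $P^{\vB*}$) from the basic iteration $\bar P^{\vA}$ (resp.\ $\bar P^{\vB}$) of Theorem~\ref{oldnonba} by composing finitely many Boolean ultrapowers, exactly as in \cite{GKS,diegoetal} (resp.\ \cite{KeShTa:1131,modKST}); as those papers carry out the argument in full, I would only recall how the pieces fit together. For each strongly compact $\kappa_j$ one uses its strong compactness (up to $\lambda_9$) to form a well-founded Boolean ultrapower, giving an elementary embedding $j=j_{\kappa_j}\colon V\to M$ with $\crit(j)=\kappa_j$, with $M$ closed under sequences of length $<\kappa_j$, and with $j(\kappa_j)$ as large as required; applying $j$ to a suitable Boolean-valued name for the poset built so far produces the next poset in the composition. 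The arithmetic hypotheses $\lambda_j^{\kappa_j}=\lambda_j$ and $\lambda_6^{<\lambda_3}=\lambda_6$ (resp.\ the Version~B analogues in (iii)--(iv)) are precisely what is needed to bound the sizes of these ultrapowers and to keep the value $\cfrak=\lambda_9$.

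The two facts to verify at each step are: (a) a Boolean ultrapower of a finite-support ccc iteration whose iterands are $(\sigma,k)$-linked for all $k$ is again a finite-support iteration of the same kind --- hence ccc --- because $\kappa_j$-completeness of the ultrafilter far exceeds the countable combinatorics involved in ccc-ness and in $(\sigma,k)$-linkedness, so these properties transfer; and (b) the ultrapower leaves unchanged every cardinal characteristic whose value is $<\kappa_j$, essentially because $j$ fixes $V_{\kappa_j}$, so that the strong witnesses --- the $\LCU$ and $\COB$ systems $\bar f,\bar g$ of Definition~\ref{def:lcu.cob} produced at earlier stages for those characteristics --- remain witnesses, whereas the length of the iteration, and hence every characteristic captured cofinally in the iteration so far (in particular all those equal to $\cfrak$), is stretched from its old value $\mu$ to $j(\mu)$, which by the choice of ultrafilter exceeds the next prescribed $\lambda_i$. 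Composing the three Boolean ultrapowers for Version~A, and the four for Version~B, in the order prescribed in the respective references and reading off the result then gives the constellations \eqref{versionA} and \eqref{versionB} of Figures~\ref{fig:1022} and~\ref{fig:1131}.

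The main obstacle is the bookkeeping guaranteeing that each Boolean ultrapower stretches exactly the intended characteristics and fixes the rest: one must check that no ultrapower adds an unbounded, dominating, or covering family of the ``wrong'' cardinality, and that none destroys an already-built scale --- i.e.\ that a witnessing scale of length $\mu$ passes to one of length $j(\mu)$ with no shorter cofinal subscale surviving. For Version~B there is the extra delicacy that $\covN$ must land strictly above $\bfrak$, which is the reason for requiring $\lambda_3$ to be $\aleph_1$-inaccessible and $\lambda_2=\lambda_2^{<\lambda_2}$; dealing with this inside the ultrapower is exactly the content of \cite{KeShTa:1131}, with the GCH-free refinement supplied by \cite{modKST} (cf.\ Remark~\ref{rem:sing}). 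Once (a) and (b) are in hand, the remaining points --- preservation of the linkedness of the iterands under each ultrapower, ccc-ness of the composition, and the final arithmetic computation of $\cfrak=\lambda_9$ --- are routine.
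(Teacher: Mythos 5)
Your overall plan---compose the Boolean ultrapower embeddings coming from the strongly compact cardinals with the iterations of Theorem~\ref{oldnonba}, transfer ccc-ness and the form of the iteration by elementarity plus ${<}\kappa_j$-closure of the target model, and track the characteristics through the strong witnesses---is the same route the paper takes (the paper itself defers the details to \cite{diegoetal} and \cite{KeShTa:1131}, supplying only the transfer result, Theorem~\ref{thm:fact:buppres}, and the sketch in Remark~\ref{RemStrWit}). However, the preservation rule you state in (b) is wrong, and it fails at exactly the point the construction is about. It is not true that ``every characteristic captured cofinally in the iteration, in particular all those equal to $\cfrak$, is stretched from $\mu$ to $j(\mu)$'': before the first ultrapower $j_7$ one has $\dfrak=\nonN=\cofN=\cfrak=\lambda_6$, and $j_7$ must raise $\nonN$, $\cofN$, $\cfrak$ to $\lambda_7$ while keeping $\dfrak=\lambda_6$; likewise $j_8$ must keep $\nonN=\lambda_7$ while raising $\cofN$ and $\cfrak$, and $j_9$ must keep $\cofN=\lambda_8$. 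As literally stated, your bookkeeping pushes $\dfrak$ (and later $\nonN$, $\cofN$) up together with $\cfrak$ and does not produce the constellation \eqref{versionA} at all. The mechanism that lets one ``$\cfrak$-valued'' characteristic stay put while the others move is the asymmetry in Theorem~\ref{thm:fact:buppres}: $\LCU_R(\lambda)$ transfers for \emph{every} regular $\lambda\neq\kappa$ (above as well as below the critical point), and $\COB_R(\lambda,\mu)$ transfers \emph{verbatim} when $\lambda>\kappa$---this is precisely where strong compactness enters, via hypothesis (iii) (cofinality of $j''I$ in $j(I)$ for directed $I$)---whereas only for $\lambda<\kappa$ is the second coordinate stretched to $|j(\mu)|$. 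For instance, at the $j_7$ step $\COB_{R_{\bfrak}}(\lambda_3,\lambda_6)$ with $\lambda_3>\kappa_7$ survives unchanged and pins $\dfrak\le\lambda_6$, while $\COB_{R_{\covN}}(\lambda_2,\lambda_6)$ with $\lambda_2<\kappa_7$ becomes $\COB_{R_{\covN}}(\lambda_2,\lambda_7)$, allowing $\nonN$ to rise.

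A second, related inaccuracy: you justify preservation of the characteristics with value below $\kappa_j$ by ``$j$ fixes $V_{\kappa_j}$''. That by itself proves nothing about the $j(P)$-extension: $j(P)$ is a much larger forcing and adds many new reals, so a priori small witnessing families could appear or old lower-bound witnesses could cease to work. What preserves these values is again Theorem~\ref{thm:fact:buppres}(b),(c) (the $\LCU$ witnesses and the first coordinate of $\COB$ are preserved), combined with Fact~\ref{fact:bla23424}; and for the ``$\ge$'' direction of the characteristics that are supposed to increase one needs $\LCU$ at the new regular cardinals up to the size of the stretched iteration, which comes from transferring the whole family of $\LCU$ statements forced by the initial iteration (Remark~\ref{RemStrWit}), not from fixing $V_{\kappa_j}$. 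Your point (a) is fine in substance (ccc-ness and the $(\sigma,k)$-linked f.s.-iteration form of $j(P)$ follow by elementarity together with ${<}\kappa_j$-closure of $M$, as in Theorem~\ref{thm:fact:buppres}(a)), but as written the proposal omits precisely the $\LCU$/$\COB$ dichotomy on which the separation of the right-hand side rests.
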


More specifically: For $i=6,7,8,9$
let $j_i$
be a complete embedding associated with some suitable Boolean ultrapower from the completion of $\col(\kappa_i,\lambda_i)$, which yields $\crit(j_i)=\kappa_i$
and $\cf(j_i(\kappa_i))=|j_i(\kappa_i)|=\lambda_i$ (see a bit more details at the end of Subsection~\ref{subsec:blass}). Then
$P^{\vA*}=j_9(j_8(j_7(P^\vA)))$ forces the constellation of Figure~\ref{fig:1022}.
Analogously,
$P^{\vB*}=j_9(j_8(j_7(j_6(P^\vB))))$ forces
the constellation of Figure~\ref{fig:1131}.

\begin{remark}\label{rem:righteq}
   In the original results of Theorem~\ref{oldba}, all the inequalities are assumed to be strict (though in \ref{versionA} this is just from $\lambda_6$), but they can be equalities alternatively. Even more, whenever we change strict inequalities on the the right side to  equalities, we may weaken the assumption by requiring fewer strongly compact cardinals.  For example, if $\lambda_j=\lambda_{j+1}$ (for some $j=6,7,8,9$) then the compact cardinal $\kappa_j$ is not required, furthermore, the weaker assumption $\lambda_{9-j}\leq\lambda_{(9-j)+1}$ (for the dual cardinal characteristic, with $\lambda_0=\aleph_1$) is allowed in this case.

   Moreover, $\dfrak$ is allowed to be singular in~\ref{versionA}, while $\covM$ is allowed to be singular in~\ref{versionB} (likewise in the construction from~\cite{GKS}). See Remark~\ref{rem:sing}.
\end{remark}

\begin{notation}
\begin{enumerate}
    \item
Whenever we are investigating a
characteristic
$\mathfrak x$, we write $\lambda_{\mathfrak x}$ for the specific value we plan
to force
to it $\mathfrak x$.
For example, for~\ref{versionA}
$\lambda_2=\lambda_{\covN}$, whereas for~\ref{versionB}
$\lambda_2=\lambda_{\mathfrak b}$.
We remark that we \emph{do not} implicitly \emph{assume}
that
$P\Vdash \mathfrak x = \lambda_{\mathfrak x}$ for the $P$
under investigation; it is just an (implicit) declaration
of intent.


\item
Whenever we base an argument on one of the old constructions
above, and say ``we can modify the construction to additionally force\dots'', we implicitly
assume that the desired values $\lambda_{\mathfrak x}$
for the ``old'' characteristics satisfy
the assumptions we made in the ``old'' constructions
(such as ``$\lambda_{\mathfrak x}$ is regular'').
\end{enumerate}
\end{notation}

See~\cite[Subsec.~2.3]{GKMS1} for details on the history of the results of this section (and more).

\subsection{Blass-uniform cardinal characteristics, LCU and COB}\label{subsec:blass}


A more detailed discussion on the concepts reviewed in this subsection can be found in~\cite[Subsec.~2.1]{GKMS1}.

\begin{definition}[{\cite[Def.~2.1]{GKMS1}}]\label{def:blassu}
   A \emph{Blass-uniform cardinal characteristic} is a characteristic of the form
   \[\mathfrak d_R:=\min\{|D|:D\subseteq\omega^\omega\text{\ and }(\forall x\in\omega^\omega)\,(\exists y\in D) \ xRy\}\]
   for some Borel\footnote{More generally, it is just enough to assume that $R$ is absolute between the extensions we consider.} $R$.
%
%
\end{definition}

Its dual cardinal
\[{\mathfrak b}_R:=\min\{|F|:F\subseteq\omega^\omega\text{\ and }(\forall y\in\omega^\omega)\,(\exists x\in F) \ \neg xRy\}\]
is also Blass-uniform because $\mathfrak{b}_R=\mathfrak{d}_{R^\perp}$ where  $xR^\perp y$ iff $\neg(yRx)$.

In the practice, Blass-uniform cardinal characteristics are defined from a relation $R\subseteq X\times Y$ where $X$ and $Y$ are Polish spaces, but since we can translate such a relation to $\omega^\omega$ using Borel isomorphisms, it is enough to discuss relations on $\omega^\omega$.

Systematic research on such cardinal characteristics started  in the 1980s or possibly even earlier, see e.g.\ Fremlin~\cite{zbMATH03891346}, Blass~\cite{MR1234278, Blass} and
Vojt\'{a}\v{s}~\cite{MR1234291}.



%

\begin{example}
    The following are pairs
    of dual Blass-uniform cardinals
   $(\mathfrak b_R,\mathfrak d_R)$ for
    natural 
    Borel relations $R$:
    \begin{enumerate}[(1)]
        \item A cardinal on the left
        hand side of Cicho\'n's diagram and its dual
        on the right hand side:
        $(\addN,\cofN)$, $(\covN,\nonN)$,
        $(\addM,\cofM)$, $(\nonM,\covM)$, and $(\mathfrak b,\mathfrak d)$.
        \item $(\mathfrak s, \mathfrak r)=(\mathfrak b_R,\mathfrak d_R)$ where
        $\mathfrak s$ is the splitting number, $\mathfrak r$ is the reaping number,
        and $R$ is the relation on $[\omega]^{\aleph_0}$ defined by $xRy$ iff ``$x$ does not split $y$''.
    \end{enumerate}
\end{example}


\begin{definition}[{\cite[Def.~2.3]{GKMS1}}]\label{def:lcu.cob}
Fix a Borel relation $R$,
$\lambda$ a regular cardinal and
$\mu$ an arbitrary cardinal. We define two properties:

\begin{description}[labelindent=0pt] 
\item[Linearly cofinally unbounded]
    $\LCU_R(\lambda)$ means: There is a family
    $\bar f=(f_{\alpha}:\alpha<\lambda)$  of
    reals
    such that:
    \begin{equation}\label{eq:LCU}
    (\forall g\in \omega^\omega)\, (\exists\alpha\in\lambda)\,(\forall \beta\in \lambda\setminus \alpha) \ \lnot f_{\beta} R g.
    \end{equation}

\item[Cone of bounds] $\COB_R(\lambda,\mu)$ means:
    There is a $\mathord<\lambda$-directed partial order $\trianglelefteq$ on $\mu$,\footnote{I.e., every subset of $\mu$ of
    cardinality ${<}\lambda$ has a $\trianglelefteq$-upper bound}
    and a family $\bar g= (g_{s}:s\in \mu)$ of
    reals such that
    \begin{equation}\label{eq:COB}
    (\forall f\in\omega^\omega)\, (\exists s\in \mu)\, (\forall t\trianglerighteq s)\  f R g_{t}.
    \end{equation}
\end{description}
\end{definition}

\begin{fact}\label{fact:bla23424}
    $\LCU_R(\lambda)$ implies $\mathfrak b_R \le \lambda\leq\mathfrak d_R$.

    $\COB_R(\lambda,\mu)$
    implies $\mathfrak b_R \ge \lambda$ and
    $\mathfrak d_R \le \mu$.
\end{fact}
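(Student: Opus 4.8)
The statement consists of four inequalities, each of which I would establish directly by unwinding the definitions.

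First, assume $\LCU_R(\lambda)$ and let $\bar f=(f_\alpha:\alpha<\lambda)$ be the witnessing family. For $\mathfrak b_R\le\lambda$ I would simply observe that $\{f_\alpha:\alpha<\lambda\}$ is an $R$-unbounded family: given any $g\in\omega^\omega$, \eqref{eq:LCU} yields some $\alpha<\lambda$ with $\lnot f_\beta R g$ for all $\beta\ge\alpha$, in particular $\lnot f_\alpha R g$. For $\lambda\le\mathfrak d_R$ I would argue by contradiction: suppose $D$ is an $R$-dominating family with $|D|<\lambda$. For each $g\in D$ pick, using \eqref{eq:LCU}, an ordinal $\alpha_g<\lambda$ such that $\lnot f_\beta R g$ for every $\beta\ge\alpha_g$. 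Since $\lambda$ is regular and $|D|<\lambda$, the ordinal $\alpha^*:=\sup_{g\in D}\alpha_g$ is still below $\lambda$. Then $f_{\alpha^*}$ satisfies $\lnot f_{\alpha^*}Rg$ for every $g\in D$ (as $\alpha^*\ge\alpha_g$), contradicting the assumption that $D$ is $R$-dominating. Hence $\mathfrak d_R\ge\lambda$.

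Second, assume $\COB_R(\lambda,\mu)$ and let $\trianglelefteq$ and $\bar g=(g_s:s\in\mu)$ be as in the definition. For $\mathfrak d_R\le\mu$ I would check that $\{g_s:s\in\mu\}$ is $R$-dominating: given $f\in\omega^\omega$, \eqref{eq:COB} provides $s\in\mu$ with $fRg_t$ for all $t\trianglerighteq s$, and since $\trianglelefteq$ is reflexive we get $fRg_s$ in particular. For $\mathfrak b_R\ge\lambda$ I would show that no $F\subseteq\omega^\omega$ with $|F|<\lambda$ can be $R$-unbounded: for each $f\in F$ choose $s_f\in\mu$ with $fRg_t$ for all $t\trianglerighteq s_f$; since $\trianglelefteq$ is ${<}\lambda$-directed and $\{s_f:f\in F\}$ has size ${<}\lambda$, there is $t^*\in\mu$ with $t^*\trianglerighteq s_f$ for every $f\in F$, and then $g_{t^*}$ satisfies $fRg_{t^*}$ for all $f\in F$, so $F$ fails to be $R$-unbounded. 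Hence $\mathfrak b_R\ge\lambda$.

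There is no real obstacle here: all four implications are short bounding/diagonalisation arguments, and the only care needed is to invoke regularity of $\lambda$ (for the two $\LCU$ statements) and ${<}\lambda$-directedness of $\trianglelefteq$ (for the two $\COB$ statements) at exactly the step where a ${<}\lambda$-sized set of parameters must be absorbed into a single one.
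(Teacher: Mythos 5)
Your proposal is correct: the paper states this as a Fact without proof (it is the standard definition-unwinding argument the authors take for granted), and your four arguments are exactly that intended argument. In particular you invoke the regularity of $\lambda$ and the ${<}\lambda$-directedness of $\trianglelefteq$ at precisely the right points, so nothing is missing.
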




We often call
 the objects
$\bar f$
in the definition
of $\LCU$
and $(\trianglelefteq,\bar g)$ for $\COB$ ``strong witnesses'', 
and we say that the corresponding cardinal inequalities (or equalities)
are ``strongly witnessed''.
For example, ``$(\mathfrak b,\mathfrak d)=(\lambda_{\mathfrak b}, \lambda_{\mathfrak d})$ is strongly witnessed''
means: for the natural relation $R$ (namely, the relation $\le^*$ of eventual
dominance),
we have $\COB_R(\lambda_{\mathfrak b},\lambda_{\mathfrak d})$, $\LCU_R(\lambda_{\mathfrak b})$ and there is some regular $\lambda_0\leq\lambda_{\mathfrak{d}}$ such that $\LCU_R(\lambda)$ for all regular $\lambda\in[\lambda_0,\lambda_{\mathfrak{d}}]$ (this is to allow $\lambda_{\mathfrak{d}}$ to be singular as in \ref{versionAnb} and \ref{versionA} of Theorems~\ref{oldnonba} and \ref{oldba}).

\begin{remark}\label{RemStrWit}
The old constructions (\eqref{versionAnb}, \eqref{versionBnb} in Theorem~\ref{oldnonba}) use that we can first force strong witnesses
to the left hand side, and then preserve strong witnesses in
Boolean ultrapowers,
so that in the final model
all Cicho\'n-characteristics
are strongly witnessed.
In more detail,
for each dual pair $(\mathfrak x,\mathfrak y)$ in Cicho\'n's diagram,
there is a natural relation $R_{\mathfrak x}$ such that
$(\mathfrak x,\mathfrak y) = (\mathfrak b_{R_\mathfrak x},
\mathfrak d_{R_\mathfrak x})$.
We use these natural relations (with one exception\footnote{The exception is the following: In \ref{versionAnb}, for the pair $(\mathfrak{x},\mathfrak{y})=(\nonM,\covM)$ it is forced $\LCU_{\neq^*}(\lambda_4)$, $\LCU_{\neq^*}(\lambda_5)$ and $\COB_{\neq^*}(\lambda_4,\lambda_5)$ (here $x\neq^* y$ iff $x(i)\neq y(i)$ for all but finitely many $i$); in \ref{versionBnb},
for
$\mathfrak x=\covN$, we use the natural relation
$R_{\covN}$
(defined as the set of all pairs $(x,y)$ where the real $y$
is in the $F_\sigma$ set of full measure coded by $x$)
only for $\COB$.
In this version, we do not know
whether $P$ forces $\LCU_{R_{\covN}}(\lambda_\covN)$
(as we do not have sufficient preservation results for $R_\covN$, more specifically, we do know
whether $(\rho,\pi)$-linked posets are $R_\covN$-good.)
Instead,
we use another relation $R'$ (which defines
different, anti-localization characteristics $(\mathfrak b_{R'},
\mathfrak d_{R'})$),
for which ZFC proves $\covN\le \mathfrak b_{R'}$
    and $\nonN\ge \mathfrak d_{R'}$.
We can then show that $P$ forces  $\LCU_{R'}(\mu)$ for all
 regular $\lambda_{\covN}\le \mu\le |\delta|$.}) as follows: The initial forcing (without Boolean ultrapowers)
is a f.s.\ iteration $P$ of length $\delta$ and
forces $\LCU_{R_{\mathfrak x}}(\mu)$
for all regular $\lambda_{\mathfrak x}\le \mu \le |\delta|$,
and
$\COB_{R_{\mathfrak x}}(\lambda_{\mathfrak x},|\delta|)$.

Once we know that the initial forcing
$P$ gives strong witnesses for the desired
values $\lambda_{\mathfrak x}$ for all ``left-hand'' values
$\mathfrak x$ in Cicho\'n's diagram (and
continuum for the cardinals $\ge \mathfrak d,\nonN$ in ~\ref{versionA}
or $\ge\covM$ in~\ref{versionB}),
we use the following theorem to separate all the entries.
\end{remark}

\begin{theorem}[{\cite{KTT,GKS}}]\label{thm:fact:buppres}
Let $\nu<\kappa$ and $\lambda\ne\kappa$ be uncountable regular cardinals, $R$ a Borel relation, 
and let $P$ be a $\nu$-cc poset forcing that $\lambda$ is regular. Assume that $j:V\to M$ is an elementary embedding into a transitive class $M$ satisfying:
\begin{enumerate}[(i)]
  \item The critical point of $j$ is $\kappa$.
  \item $M$ is ${<}\kappa$-closed.\footnote{I.e., $M^{{<}\kappa}\subseteq M$.}
  \item For any cardinal $\theta>\kappa$ and any ${<}\theta$-directed partial order $I$,
       $j''I$ is cofinal in $j(I)$.
\end{enumerate}
Then:
\begin{enumerate}[(a)]
    \item $j(P)$ is a $\nu$-cc forcing.
    \item If $P\Vdash\LCU_R(\lambda)$, then
    $j(P)\Vdash\LCU_R(\lambda)$.
    \item If $\lambda<\kappa$ and $P\Vdash\COB_R(\lambda,\mu)$,
    then $j(P)\Vdash\COB_R(\lambda,|j(\mu)|)$.  
    \item If $\lambda>\kappa$ and $P\Vdash\COB_R(\lambda,\mu)$,
    then $j(P)\Vdash\COB_R(\lambda,\mu)$.  
\end{enumerate}
\end{theorem}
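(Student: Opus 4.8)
The plan is to exploit three standard features of the situation, all flowing from hypotheses (i)--(iii) together with the $\nu$-cc of $P$ (and, by (a), of $j(P)$). First, since $P$ is $\nu$-cc and $\nu<\kappa=\crit(j)$, every maximal antichain of $P$ has size ${<}\nu<\kappa$, so $j$ sends it to its pointwise image, which by elementarity is maximal in $j(P)$; hence $j\restriction P$ is a complete embedding of $P$ into $j(P)$. Thus a $j(P)$-generic $G$ over $V$ induces a $P$-generic $G_0$ with $V[G_0]\subseteq V[G]$, and $j$ lifts to an elementary $\hat\jmath\colon V[G_0]\to M[G]$ with $\crit(\hat\jmath)=\kappa$. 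Secondly, a nice $j(P)$-name for a real is built from countably many antichains of size ${<}\nu<\kappa$, hence lies in $M$ by (ii); so $V[G]$ and $M[G]$ have the same reals, and $R$ (being Borel) is absolute between them. Thirdly, if $\lambda>\kappa$ then (iii) applied to the linear order $\lambda$ gives that $j''\lambda$ is cofinal in $j(\lambda)$, while if $\lambda<\kappa$ then $j\restriction\lambda=\mathrm{id}$ and $j(\lambda)=\lambda$; in both cases $\lambda$ stays regular in the relevant extensions.

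For (a): by elementarity $M$ thinks $j(P)$ is $j(\nu)=\nu$-cc, and any $V$-antichain of $j(P)$ of size $\nu<\kappa$ lies in $M$ by (ii) and is still an antichain (incompatibility is downward absolute), a contradiction. For (b): I would apply $j$ to a $P$-name $\dot{\bar f}$ for an $\LCU_R(\lambda)$-witness to get, in $M$, a $j(P)$-name $(\dot h_\xi:\xi<j(\lambda))$ for an $\LCU_R(j(\lambda))$-witness, and reindex the subfamily along $j\restriction\lambda$, i.e.\ take $(\dot h_{j(\alpha)}:\alpha<\lambda)$, a name lying in $V$. In $V[G]$ a given real $g$ lies in $M[G]$, where some tail of the $h_\xi[G]$ is $\lnot R\,g$; pushing the starting point down to some $j(\alpha_0)$ using cofinality of $j''\lambda$ and absoluteness of $R$ yields $\LCU_R(\lambda)$ in $V[G]$.

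For (c), where $\lambda<\kappa$ so $j(\lambda)=\lambda$: elementarity gives that $M$ thinks $j(P)$ forces $\COB_R(\lambda,j(\mu))$ with witness $(j(\dot\trianglelefteq),j(\dot{\bar g}))$, and I would transfer this to $V[G]$ --- the cone-of-bounds clause by agreement of reals and absoluteness, and ${<}\lambda$-directedness because a ${<}\lambda<\kappa$-sized subset of $j(\mu)$ in $V[G]$ has a name built from ${<}\kappa$-many ${<}\nu$-antichains, hence lies in $M[G]$, where it already has an upper bound --- and finally transport to the cardinal $|j(\mu)|$ by a bijection in $V$. For (d), where $\lambda>\kappa$ so $j(\lambda)>\lambda$ and the index set must stay $\mu$: I would use the lift $\hat\jmath$, fix a $\COB_R(\lambda,\mu)$-witness $(\trianglelefteq_0,\bar g^0)$ in $V[G_0]$, set $\trianglelefteq':=\hat\jmath(\trianglelefteq_0)$ (a ${<}j(\lambda)$-directed order on $j(\mu)$ in $M[G]$), and pull it back to $\mu$ via $j\restriction\mu$, using the family $(g'_{j(a)}:a<\mu)$ with $g'=\hat\jmath(\bar g^0)$. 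The cone-of-bounds clause then works: a real $f\in V[G]$ lies in $M[G]$, where its $\trianglelefteq'$-cone above some $s\in j(\mu)$ bounds it; provided $j''\mu$ is cofinal in $(j(\mu),\trianglelefteq')$, one finds $a\in\mu$ with $j(a)\trianglerighteq's$, and the pulled-back cone above $a$ then bounds $f$. That the pulled-back order is ${<}\lambda$-directed in $V[G]$ reduces, via a $\nu$-cc counting argument (any $j(P)$-name $\dot D$ forced into $j''\mu$ of size ${<}\lambda$ is forced below some ground-model $A\subseteq j''\mu$ of size ${<}\lambda$, and $A\subseteq j(A_0)\in M$ for $A_0:=(j\restriction\mu)^{-1}[A]$), to the ${<}j(\lambda)$-directedness of $\trianglelefteq'$ in $M[G]$.

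The step I expect to be the real obstacle is the $\lambda>\kappa$ case of $\COB$, and specifically the claim that $j''\mu$ is cofinal in $(j(\mu),\trianglelefteq')$ --- equivalently, that the lifted embedding $\hat\jmath$ still satisfies the cofinality property (iii) for ${<}\lambda$-directed orders living in $V[G_0]$ rather than in $V$. Hypotheses (i)--(iii) are stated for $j$ on $V$, and transferring (iii) through the lift is what forces one to use the finer structure of the Boolean ultrapower (or of a $\nu$-cc forcing over a ${<}\kappa$-closed $M$) instead of (i)--(iii) as black boxes. In the concrete applications this is sidestepped, since the $\COB$-witness order can be taken already in the ground model $V$ with a form of ${<}\lambda$-directedness preserved by $\nu$-cc forcing (e.g.\ a product-type order on a set whose cofinality exceeds $\lambda$), so that (iii) applies to $j$ directly and ``$j''\mu$ is cofinal in $(j(\mu),j(\trianglelefteq))$'' becomes a purely ground-model fact that persists into $V[G]$.
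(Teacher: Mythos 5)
Parts (a), (c) and your treatment of $\LCU$ are essentially fine (your route for (b) --- restricting the $j$-image witness to the indices $j''\lambda$ and using (iii) directly for $\lambda>\kappa$, the identity below the critical point for $\lambda<\kappa$ --- is a harmless variant of the paper's route via $j(P)\Vdash\cof(j(\lambda))=\lambda$). The genuine gap is exactly the step you flag yourself: the cofinality of $j''\mu$ in $(j(\mu),j(\dot\trianglelefteq))$ when the $\COB$-order is only a $P$-\emph{name}, together with the ${<}\lambda$-directedness of its restriction to $j''\mu$ (note that your directedness reduction also needs this: the upper bound you produce in $M[G]$ lies in $j(\mu)$, not in $j''\mu$, so cofinality is needed a second time). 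Your conclusion that this cannot be extracted from (i)--(iii) as black boxes, and must either use the fine structure of the Boolean ultrapower or the assumption that the witness order already lives in $V$, is where the proposal falls short of the stated theorem: the theorem is asserted (and used, e.g.\ in the iterated ultrapowers $j_9(j_8(j_7(P^{\vA})))$) for arbitrary names of witnesses, so ``in the applications the order is in the ground model'' does not prove it.

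The paper closes this gap with a nice-names device that stays entirely within (i)--(iii) plus the $\nu$-cc of $P$ and the ${<}\kappa$-closure of $M$. Let $L^*$ be the set of nice $P$-names of members of $\mu$, ordered by $\dot x\leq\dot y$ iff $P\Vdash\dot x\mathrel{\dot\trianglelefteq}\dot y$. This is a ${<}\lambda$-directed partial order \emph{in $V$} (use mixing to name upper bounds), so hypothesis (iii) applies to it directly and gives that $j''L^*$ is cofinal in $(j(L^*),j(\leq))$. Now two observations finish the argument: first, since $j(P)$ is $\nu$-cc and $M$ is ${<}\kappa$-closed, every nice $j(P)$-name of a member of $j(\mu)$ is an element of $M$, so $j(L^*)$ is the set of \emph{all} such names; second, a nice name in $L^*$ is coded by a maximal antichain of size ${<}\nu<\kappa$ together with its values, so $j$ acts on it pointwise, and hence $j(\dot x)$ is forced to denote a member of $j''\mu$. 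Combining, every member of $j(\mu)$ in the $j(P)$-extension is forced below some $j(\dot x)$ whose value lies in $j''\mu$; this yields that $j(P)$ forces ``$j''\mu$ is cofinal in $(j(\mu),j(\dot\trianglelefteq))$ and ${<}\lambda$-directed'' (item (vi) of the paper's proof), after which (d) is immediate by restricting $(j(\dot{\bar g}),j(\dot\trianglelefteq))$ to $j''\mu$, and the same device applied to $(\lambda,\leq)$ gives $j(P)\Vdash\cof(j(\lambda))=\lambda$ used for (b). So no appeal to the lift $\hat\jmath$ or to the internal structure of the Boolean ultrapower is needed.
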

\begin{proof}
  We include the proof for completeness. Property (a) is immediate by (ii). First note that $j$ satisfies the following additional properties.
  \begin{enumerate}[(i)]
  \setcounter{enumi}{3}
    \item Whenever $a$ is a set of size ${<}\kappa$, $j(a)=j''a$.
    \item If $\cf(\alpha)\neq\kappa$ then $\cf(j(\alpha))=\cf(\alpha)$.
    \item If $\theta>\kappa$, $L$ is a set and $P\Vdash$``$(L,\dot \trianglelefteq)$ is ${<}\theta$-directed'' then $j(P)\Vdash$``$j''L$ is cofinal in $(j(L),j(\dot \trianglelefteq))$, and it is ${<}\theta$-directed''.
    \item $j(P)\Vdash$``$\cof(j(\lambda))=\lambda$''.
  \end{enumerate}
  Item (iv) follows from (i), and (v) follows from (iii). We show (vi). Let $L^*$ be the set of nice $P$-names of members of $L$, and order it by $\dot x\leq \dot y$ iff $P\Vdash\dot x\dot \trianglelefteq \dot y$. It is clear that $\leq$ is ${<}\theta$-directed on $L^*$. On the other hand, since any nice $j(P)$-name of a member of $j(L)$ is already in $M$ by (ii) and (a), $j(L^*)$ is equal to the set of nice $j(P)$-names of members of $j(L)$. Therefore, by (iii), $j''L^*$ is cofinal in $j(L^*)$. Note that $j''L^*$ is equal to the set of nice $j(P)$-names of members of $j''L$. Thus, (vi) follows.

  For (vii), the case $\lambda<\kappa$ is immediate by (i) and (ii); when $\lambda>\kappa$, apply (vi) to $(L,\dot \trianglelefteq)=(\lambda,\leq)$ (the usual order) and $\theta=\lambda$.

  To see (b), note that $M\vDash$``$j(P)\Vdash\LCU_R(j(\lambda))$'' and, by (a) and (ii), the same holds inside $V$ (because any nice name of an ordinal, represented by a maximal antichain on $P$, belongs to $M$, hence any nice name of a real, which in fact means that $j(P)\Vdash\LCU_R(\cof(j(\lambda)))$. By (vii) we are done.

  Now assume $P\Vdash\COB_R(\lambda,\mu)$ witnessed by $(\dot \trianglelefteq,\dot{\bar{g}})$. This implies $M\models$``$j(P)\Vdash(j(\dot \trianglelefteq),j(\dot{\bar{g}}))$ witnesses $\COB_R(j(\lambda),j(\mu))$''. If $\lambda<\kappa$ then $j(\lambda)=\lambda$ and it follows that $V\models$``$j(P)\Vdash\COB_R(\lambda,|j(\mu)|)$''. In the case $\lambda>\kappa$ apply (vi) to conclude that $j(P)$ forces that $(j(\dot{\bar{g}}(\beta)):\beta<\mu)$, with $j(\dot \trianglelefteq)$ restricted to $j''\mu$, witnesses $\COB_R(\lambda,\mu)$.
\end{proof}

If $\kappa$ is a strongly compact cardinal and $\theta^\kappa=\theta$, then there is an elementary embedding $j$ associated with a Boolean ultrapower of the completion of $\col(\kappa,\theta)$ such that $j$ satisfies (i)--(iii) of the preceding lemma and, in addition, for any cardinal $\lambda\geq\kappa$ such that either $\lambda\leq\theta$ or $\lambda^\kappa=\lambda$ holds,
we have $\max\{\lambda,\theta\}\leq j(\lambda)<\max\{\lambda,\theta\}^+$ (see details in \cite{KTT,GKS}).
Therefore, using only this lemma, it is easy to see how to get from
the old constructions (Theorem~\ref{oldnonba}) to the Boolean ultrapowers (Theorem~\ref{oldba}), as described in Remark~\ref{RemStrWit} (see details in \cite[Thm. 5.7]{diegoetal} for \ref{versionA} and \cite[Thm. 3.1]{KeShTa:1131} for \ref{versionB}).

\section{Cardinal characteristics in extensions without new \texorpdfstring{${<}\kappa$}{<kappa}-sequences}\label{sec:nnr}


This section summarizes the technical results introduced in~\cite{GKMS1}.

\begin{lemma}[{\cite[Lemma~3.1]{GKMS1}}]\label{lem:blassdistr}
Assume that $Q$ is $\theta$-cc and ${<}\kappa$-distributive
for $\kappa$ regular uncountable, and let $\lambda$ be a regular cardinal
and $R$ a Borel relation.
\begin{enumerate}
    \item If $\LCU_R(\lambda)$, then
    $Q\Vdash\LCU_R(\cof(\lambda))$.

    So if additionally $\lambda\le\kappa$ or $\theta\le\lambda$,
    then $Q\Vdash\LCU_R(\lambda)$.
    \item
    If $\COB_R(\lambda,\mu)$ and
    either $\lambda\le\kappa$ or $\theta\le\lambda$,
    then $Q\Vdash \COB_R(\lambda,|\mu|)$.

    So for any $\lambda$, $\COB_R(\lambda,\mu)$ implies
    $Q\Vdash \COB_R(\min(|\lambda|,\kappa),|\mu|)$.
\end{enumerate}
\end{lemma}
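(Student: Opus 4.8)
The plan is to prove the two parts of Lemma~\ref{lem:blassdistr} more or less directly from the definitions, using that ${<}\kappa$-distributivity means $Q$ adds no new $<\kappa$-sequences of ordinals (hence no new reals when $\kappa>\aleph_0$), and that $\theta$-cc gives us nice-name counting. Throughout I fix a $Q$-generic $G$ and work in $V$ versus $V[G]$.

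For part (1): suppose $\bar f=(f_\alpha:\alpha<\lambda)$ witnesses $\LCU_R(\lambda)$ in $V$. Since $Q$ is ${<}\kappa$-distributive and $\kappa$ is uncountable, $Q$ adds no new reals, so every $g\in(\omega^\omega)^{V[G]}$ is already in $V$; thus the witnessing property \eqref{eq:LCU} holds verbatim in $V[G]$ for the \emph{same} family $\bar f$ — except that the relevant cofinal structure is now $\cof(\lambda)$ rather than $\lambda$ itself, because $Q$ may collapse $\lambda$. More precisely: if $\lambda$ stays regular (equivalently, $\cof^{V[G]}(\lambda)=\lambda$) there is nothing to do; if not, pick in $V$ a cofinal map $e:\cof(\lambda)\to\lambda$ and set $f'_\xi:=f_{e(\xi)}$. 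Given $g\in(\omega^\omega)^{V[G]}=(\omega^\omega)^V$, take $\alpha<\lambda$ as in \eqref{eq:LCU}; by cofinality of $e$ there is $\xi_0<\cof(\lambda)$ with $e(\xi_0)\ge\alpha$, and increasingness of $e$ (we may take $e$ strictly increasing since $\cof(\lambda)$ is regular) gives $e(\xi)\ge\alpha$ for all $\xi\ge\xi_0$, hence $\lnot f'_\xi R g$ for all $\xi\ge\xi_0$. So $Q\Vdash\LCU_R(\cof(\lambda))$. The "so if" clause is then automatic: if $\lambda\le\kappa$ then $Q$ adds no new ${<}\lambda$-sequences hence preserves regularity of $\lambda$, and if $\theta\le\lambda$ then $\theta$-cc preserves regularity of $\lambda$; either way $\cof^{V[G]}(\lambda)=\lambda$.

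For part (2): suppose $(\trianglelefteq,\bar g)$ with $\bar g=(g_s:s\in\mu)$ witnesses $\COB_R(\lambda,\mu)$ in $V$, and assume $\lambda\le\kappa$ or $\theta\le\lambda$. Again $Q$ adds no reals, so $\bar g$ is unchanged and every $f\in(\omega^\omega)^{V[G]}$ lies in $V$; the only thing to check is that $\trianglelefteq$ remains ${<}\lambda$-directed in $V[G]$. A ${<}\lambda$-sized subset $A$ of $\mu$ in $V[G]$: if $\lambda\le\kappa$ then by ${<}\kappa$-distributivity $A\in V$, so its $\trianglelefteq$-upper bound in $V$ works; if $\theta\le\lambda$, then $A$ is covered by a set $A'\in V$ with $|A'|<\lambda$ (by a standard $\theta$-cc nice-name/counting argument: $A$ is coded by a function $\dot h$ on $|A|<\lambda$, and each value has $<\lambda$ possibilities since $\theta\le\lambda$ and $\lambda$ remains regular... — more simply, use that $\theta$-cc forcing does not change cofinalities $\ge\theta$ and cover $A$ by $\bigcup$ of antichains), and an upper bound for $A'$ in $V$ bounds $A$. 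Then for any $f\in(\omega^\omega)^{V[G]}=(\omega^\omega)^V$, pick $s\in\mu$ as in \eqref{eq:COB}; directedness in $V[G]$ of the same order, together with $R$ being absolute (as a Borel relation evaluated on ground-model reals), gives $Q\Vdash\COB_R(\lambda,\mu)$, i.e.\ $\COB_R(\lambda,|\mu|)$ since $|\mu|$ is just the cardinality of the index set. For the final "so for any $\lambda$" clause: given arbitrary $\lambda$, the order $\trianglelefteq$ witnessing $\COB_R(\lambda,\mu)$ is in particular ${<}\min(|\lambda|,\kappa)$-directed ( for that weaker regular parameter, using $\min(|\lambda|,\kappa)\le\kappa$, its ${<}\min(|\lambda|,\kappa)$-sized subsets are in $V$), so the case just proved with $\lambda$ replaced by $\min(|\lambda|,\kappa)\le\kappa$ applies and yields $Q\Vdash\COB_R(\min(|\lambda|,\kappa),|\mu|)$.

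The routine-but-only-mild obstacle is the bookkeeping around cofinality and directedness parameters, i.e.\ making sure that $\cof(\lambda)$, not $\lambda$, is the right output in part (1) and that "${<}\lambda$-directed in $V$" survives to "${<}\lambda$-directed in $V[G]$" in part (2) under exactly the two sufficient hypotheses $\lambda\le\kappa$ and $\theta\le\lambda$. There is no deep combinatorics here — the whole content is "no new reals plus preservation of the relevant cofinalities" — so I expect the proof to be short once these parameter-tracking points are isolated; indeed this is why the paper attributes it to \cite[Lemma~3.1]{GKMS1} and states it here only as a summary.
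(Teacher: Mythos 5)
Your proposal follows essentially the same route as the paper's own argument (taken from the cited Lemma~3.1 of \cite{GKMS1}): since $Q$ adds no reals, the same witnesses work in the extension, with $\LCU$ repaired by passing to a cofinal subfamily of order type $\cof(\lambda)$, and ${<}\lambda$-directedness for $\COB$ preserved either because small sets of ordinals already lie in $V$ (case $\lambda\le\kappa$) or by the standard $\theta$-cc covering argument (case $\theta\le\lambda$), plus the trivial weakening to $\min(|\lambda|,\kappa)$ for the last clause. One small correction: when $\lambda$ is regular in $V$ but its cofinality drops in $V[G]$, the cofinal map $e:\cof^{V[G]}(\lambda)\to\lambda$ cannot be ``picked in $V$'' (there $\cof(\lambda)=\lambda$); it must be chosen in the extension, which is harmless since the $\LCU$ witness in $V[G]$ only needs to be a $V[G]$-sequence of ground-model reals.
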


%


\begin{lemma}[{\cite[Lemma~3.2]{GKMS1}}]\label{lem:blasssub}
Assume that $R$ is a Borel relation, $P'$ is a complete
subforcing of $P$, $\lambda$ regular and $\mu$ is a cardinal,
both preserved in the $P$-extension.
\begin{enumerate}[(a)]
    \item If $P\Vdash\LCU_R(\lambda)$
witnessed by some $\dot{\bar{f}}$, and
 $\dot {\bar f}$ is actually a $P'$-name, then
 $P'\Vdash\LCU_R(\lambda)$.
 \item If $P\Vdash\COB_R(\lambda,\mu)$
witnessed by some $(\dot{\trianglelefteq},\dot {\bar g})$, and
$(\dot{\trianglelefteq},\dot {\bar g})$ is actually a $P'$-name, then
 $P'\Vdash\COB_R(\lambda,|\mu|)$.
\end{enumerate}
\end{lemma}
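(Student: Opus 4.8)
The plan is to transfer the strong witness \emph{downward}, from the larger model $V[G]$ to the smaller model $V[G']$, exploiting that the $P'$-extension contains fewer reals while still containing the witness. Fix a $P'$-generic filter $G'$ over $V$. Since $P'$ is a complete subforcing of $P$, we may (possibly in a further generic extension) extend $G'$ to a $P$-generic filter $G$ over $V$ with $G\cap P'=G'$; then $V[G']\subseteq V[G]$, and everything forced by $P$ holds in $V[G]$. The key observation is that, because $\dot{\bar f}$ (resp.\ $(\dot\trianglelefteq,\dot{\bar g})$) is a $P'$-name, its interpretation by $G'$ and by $G$ coincide, so the witnessing objects $\bar f=(f_\alpha:\alpha<\lambda)$ and $(\trianglelefteq,\bar g)$ live in $V[G']$ and are literally the same sets in $V[G']$ and $V[G]$. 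Finally, since $\lambda$ is regular and $\mu$ a cardinal in $V[G]$ by hypothesis, and regularity and cardinality are downward absolute from $V[G]$ to $V[G']$ (a cofinal map, resp.\ a collapsing bijection, in the smaller model would also lie in the larger one), $\lambda$ remains regular and $\mu$ a cardinal in $V[G']$; in particular $|\mu|=\mu$ there.

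For (a), I would reformulate $\LCU_R(\lambda)$ via $\bar f$ as the statement ``for every $g$, the set $B_g:=\{\beta<\lambda:f_\beta R g\}$ is bounded below $\lambda$''. Let $g\in\omega^\omega\cap V[G']$ be arbitrary. Then $g\in V[G]$ as well, and since $R$ is Borel and the parameters $f_\beta,g$ lie in both models, the relation $f_\beta R g$ is absolute; hence $B_g$ computed in $V[G']$ equals $B_g$ computed in $V[G]$. As $\LCU_R(\lambda)$ holds in $V[G]$, this common set is bounded below $\lambda$, and the same ordinal bound works in $V[G']$. Since $g$ was an arbitrary real of $V[G']$, this yields $\LCU_R(\lambda)$ in $V[G']$. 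The conceptual point is exactly that the universal quantifier ``$\forall g$'' ranges over a \emph{smaller} set of reals in $V[G']$, so the condition can only become easier to satisfy, while the witness $\bar f$ survives untouched.

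For (b), there are two things to check, and both use the same idea. First, $<\lambda$-directedness of $\trianglelefteq$ descends: given $A\subseteq\mu$ with $|A|<\lambda$ in $V[G']$, we have $A\in V[G]$ and $|A|^{V[G]}\le|A|^{V[G']}<\lambda$, so $\trianglelefteq$ (which is $<\lambda$-directed in $V[G]$) provides an upper bound $t\in\mu$; as $t$ is an ordinal below $\mu$ it lies in $V[G']$, and since $\trianglelefteq$ is the same relation in both models, $t$ bounds $A$ in $V[G']$ too. Second, the cone property descends by the absoluteness argument of part (a): for each $f\in\omega^\omega\cap V[G']\subseteq V[G]$, the witness $s\in\mu$ given by $\COB_R(\lambda,\mu)$ in $V[G]$ is an ordinal below $\mu$, hence in $V[G']$, and the statement $(\forall t\trianglerighteq s)\,f R g_t$ is absolute between the two models (Borel $R$, identical relation $\trianglerighteq$, identical range of $t$). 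Together these give $\COB_R(\lambda,|\mu|)$ in $V[G']$.

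The only genuine content, and hence the point I would be most careful about, is the absoluteness bookkeeping: that the witness objects are the same sets in $V[G']$ and $V[G]$ (this is precisely where the hypothesis ``$\dot{\bar f}$, resp.\ $(\dot\trianglelefteq,\dot{\bar g})$, is a $P'$-name'' is indispensable), and that the Borel relation $R$ and the partial order $\trianglelefteq$ are evaluated identically in both models. Everything else --- that a $P'$-generic extends to a $P$-generic because $P'$ is a complete subforcing, and that regularity and cardinality drop down to the smaller model --- is standard. No new reals or new small sets are created going \emph{down} to $V[G']$, so no preservation hypothesis on the quotient $P/P'$ is needed; the argument is entirely one of restricting the range of the outer quantifier while keeping the witness fixed.
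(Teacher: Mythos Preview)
Your proof is correct and follows essentially the same approach as the paper's: pass to the intermediate $P'$-extension $V_1\subseteq V_2$, note that the witness lives in $V_1$, and use absoluteness of the Borel relation $R$ (and of the order $\trianglelefteq$) to see that the defining properties~\eqref{eq:LCU} and~\eqref{eq:COB} descend, since the outer universal quantifier over reals only ranges over a smaller set. Your write-up is considerably more detailed than the paper's two-line sketch, in particular in spelling out the ${<}\lambda$-directedness check and the cardinal-preservation bookkeeping, but the underlying argument is the same.
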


We now review three properties of cardinal
characteristics.

\begin{definition}[{\cite[Def.~3.3]{GKMS1}}]
Let $\mathfrak x$ be a cardinal characteristic.
\begin{enumerate}[(1)]
\item
   $\mathfrak x$ is \emph{\plike},
   if it has the following form:
   There is a formula $\psi(x)$ (possibly with, e.g., real parameters)
   absolute between universe extensions that do not add
   reals,\footnote{Concretely,
   if $M_1\subseteq M_2$ are transitive (possibly class) models of a fixed, large
   fragment of ZFC,
   with the same reals, then
   $\psi$ is absolute between $M_1$ and $M_2$.}
   such that $\mathfrak x$ is the smallest cardinality
   $\lambda$ of a set $A$ of reals such that $\psi(A)$.

   All Blass-uniform characteristics are \plike;
   other examples are $\mathfrak p$, $\mathfrak t$, $\mathfrak u$, $\mathfrak a$ and $\mathfrak i$.
\item
    $\mathfrak{x}$ is called \emph{\hlike},
    if it satisfies the same,
    but with $A$ being a family of sets of reals (instead of just a set
    of reals).

    Note that \plike\ implies \hlike, as we can include ``the family of
    sets of reals is a family of singletons'' in $\psi$.
    Examples are $\mathfrak h$ and $\mathfrak g$.
\item
   $\mathfrak{x}$ is called \emph{\mlike},
   if it has the following form:
   There is a formula $\varphi$ (possibly with, e.g., real parameters)
   such that $\mathfrak x$ is the smallest cardinality
   $\lambda$  such that $H({\le}\lambda)\vDash \varphi$.

   Any infinite  \plike\ characteristic is \mlike:  
   If $\psi$ witnesses \plike, then we can use
   $\varphi=(\exists A)\, [\psi(A)\&(\forall a\in A)\ a\text{ is a real}]$
   to get \mlike\ (since $H({\le}\lambda)$ contains all reals).
   Examples are\footnote{$\mathfrak m$ can be characterized as the smallest $\lambda$ such that there is in $H({\le} \lambda)$ a ccc forcing $Q$ and a family $\bar D$ of dense
   subsets of $Q$ such
    that ``there is no filter $F\subseteq Q$ meeting all $D_i$'' holds.}
   $\mathfrak m$, $\mathfrak m(\textrm{Knaster})$, etc.
\end{enumerate}
\end{definition}


\begin{lemma}[{\cite[Lemma~3.4]{GKMS1}}]\label{lem:trivial}
Let $V_1\subseteq V_2$ be models (possibly classes)
of set theory (or a sufficient fragment),
$V_2$ transitive and $V_1$ is either transitive or an elementary submodel of $H^{V_2}(\chi)$ for some large enough regular $\chi$,
such that $V_1\cap \omega^\omega = V_2\cap \omega^\omega$.
\begin{enumerate}[(a)]
    \item If $\mathfrak x$ is \hlike, then
    $V_1\vDash \mathfrak{x}=\lambda$ implies
    $V_2\vDash \mathfrak{x}\le |\lambda|$.
\end{enumerate}
In addition, whenever $\kappa$ is uncountable regular in $V_1$
and $V_1^{{<}\kappa}\cap V_2\subseteq V_1$:
\begin{enumerate}[(a)]
\setcounter{enumi}{1}
    \item
    If $\mathfrak x$ is \mlike,
    then
    $V_1\vDash \mathfrak{x}\ge\kappa$ iff
    $V_2\vDash \mathfrak{x}\ge \kappa$.
    \item If $\mathfrak x$ is \mlike\  and
    $\lambda<\kappa$, then
    $V_1\vDash \mathfrak{x}=\lambda$ iff
    $V_2\vDash \mathfrak{x}=\lambda$.
    \item If $\mathfrak x$ is \tlike\  and
    $\lambda=\kappa$, then
    $V_1\vDash \mathfrak{x}=\lambda$ implies
    $V_2\vDash \mathfrak{x}=\lambda$.
\end{enumerate}
\end{lemma}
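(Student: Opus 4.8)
The plan is to prove the four items by combining the absoluteness of the defining formulas with the hypothesis $V_1^{<\kappa}\cap V_2\subseteq V_1$, which says that $V_1$ is ``correct'' about $<\kappa$-sequences of ordinals appearing in $V_2$. Throughout, I would treat the two cases of the hypothesis on $V_1$ uniformly: if $V_1\prec H^{V_2}(\chi)$ then a formula with real parameters holds in $V_1$ iff it holds in $H^{V_2}(\chi)$ iff (by choosing $\chi$ large enough so that $H(\chi)$ reflects the relevant $H({\le}\lambda)$ and is $\Sigma$-correct) it holds in $V_2$; and if $V_1$ is transitive the only work is genuine upward/downward absoluteness. The one genuinely shared ingredient is that since $V_1$ and $V_2$ have the same reals, any formula $\psi$ or $\varphi$ of the kind appearing in \hlike/\mlike/\tlike\ with only real parameters is absolute between $V_1$ and $V_2$ — for \hlike\ this is exactly the stipulated absoluteness of $\psi$, and for \mlike\ the point is that $H^{V_1}({\le}\lambda)$ and $H^{V_2}({\le}\lambda)$ have the same reals and, when $\lambda<\kappa$ or $\lambda=\kappa$ and we are comparing against $\kappa$, agree on the relevant bounded subsets.

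For (a): given $V_1\vDash\mathfrak x=\lambda$, let $A\in V_1$ be a witnessing family of sets of reals with $|A|^{V_1}=\lambda$, so $\psi(A)$ holds in $V_1$. Since $V_1$ and $V_2$ have the same reals, every set of reals in $V_1$ is a set of reals in $V_2$ (as a \emph{set}, it may have been coded by a subset of $\omega^\omega$, but its extension is preserved), so $A\in V_2$ is still a family of sets of reals; by absoluteness of $\psi$, $\psi(A)$ holds in $V_2$; hence $V_2\vDash\mathfrak x\le|A|^{V_2}\le|\lambda|$. (The passage to $|\lambda|$ absorbs a possible collapse of the cardinality of $A$ between $V_1$ and $V_2$.)

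For (b) and (c): here we need the extra hypothesis. For the direction $V_1\vDash\mathfrak x\ge\kappa\Rightarrow V_2\vDash\mathfrak x\ge\kappa$ (contrapositive), suppose $V_2\vDash\mathfrak x<\kappa$, so there is $\mu<\kappa$ with $H^{V_2}({\le}\mu)\vDash\varphi$; note $H^{V_2}({\le}\mu)$ is (coded by) a set of size $<\kappa$ of ordinals/reals, hence — after absorbing its reals, which lie in $V_1$, and its $<\kappa$-many bounded pieces, which lie in $V_1$ by $V_1^{<\kappa}\cap V_2\subseteq V_1$ — we get $H^{V_1}({\le}\mu)$ satisfies $\varphi$ too (the two $H({\le}\mu)$'s agree because $\mu<\kappa$), so $V_1\vDash\mathfrak x\le\mu<\kappa$. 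Conversely, if $V_1\vDash\mathfrak x<\kappa$, witnessed by some $\mu<\kappa$ with $H^{V_1}({\le}\mu)\vDash\varphi$, then since $\mu<\kappa$ the structure $H^{V_1}({\le}\mu)$ is correctly computed in $V_2$ as well (same reals, and $V_1$'s bounded subsets are genuine by transitivity/elementarity plus the fact that going up cannot add bounded subsets of size $<\kappa$ inside such small $H$'s), whence $V_2\vDash\mathfrak x\le\mu<\kappa$. This gives (b); item (c) is the refinement observing that in both directions the witnessing $\mu$ can be taken to be the \emph{same} $\lambda<\kappa$, since the two $H({\le}\lambda)$'s literally coincide in the relevant respects when $\lambda<\kappa$.

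For (d): \tlike\ (i.e.\ \plike) is stronger, so we have a formula $\psi$ on sets of reals, absolute for no-new-reals extensions, with $\mathfrak x$ the least $\lambda$ with a witnessing \emph{set of reals} $A$, $|A|=\lambda$. Assume $V_1\vDash\mathfrak x=\kappa$. First, $\mathfrak x^{V_2}\le\kappa$: take $A\in V_1$ of size $\kappa$ with $\psi(A)$; as in (a), $A\in V_2$ with the same reals as members, $\psi(A)$ persists, and $|A|^{V_2}\le\kappa$. Second, $\mathfrak x^{V_2}\ge\kappa$: if not, there is $B\in V_2$, a set of reals, with $|B|^{V_2}=\mu<\kappa$ and $\psi(B)$; but then $B$ is a $<\kappa$-sized set of reals, all of which lie in $V_1$, and enumerating $B$ in order type $\mu<\kappa$ gives a $<\kappa$-sequence of reals in $V_2$, hence in $V_1$ by the hypothesis, so $B\in V_1$; by absoluteness of $\psi$ (same reals), $\psi(B)$ holds in $V_1$, contradicting $\mathfrak x^{V_1}=\kappa$. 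Hence $\mathfrak x^{V_2}=\kappa$.

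The main obstacle is the bookkeeping in items (b)--(c): one must be careful that ``$H({\le}\mu)\vDash\varphi$'' transfers between $V_1$ and $V_2$, which requires knowing that the two versions of $H({\le}\mu)$ have the same reals (clear) \emph{and} the same bounded-below-$\kappa$ sets that can occur inside $H({\le}\mu)$ — this is exactly where $V_1^{<\kappa}\cap V_2\subseteq V_1$ and the restriction $\mu<\kappa$ (resp.\ the comparison against $\kappa$ in (b)) are used, and where the dichotomy $V_1$ transitive vs.\ $V_1\prec H^{V_2}(\chi)$ must be handled (in the elementary-submodel case one further invokes Tarski--Vaught and the fact that $H^{V_2}(\chi)$ correctly computes $H({\le}\mu)$ for $\mu$ below $\chi$). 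Everything else is a routine ``same reals $\Rightarrow$ absolute'' argument already packaged in the definitions.
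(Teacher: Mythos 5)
Your proposal is correct and follows essentially the same route as the paper's proof: same-reals absoluteness of the defining formula handles (a) and (d) (the non-transitive case passing through the transitive $H^{V_2}(\chi)$, as the paper does via elementarity or the transitive collapse), and the crux for (b)--(c) is that $H^{V_1}({\le}\mu)=H^{V_2}({\le}\mu)$ for $\mu<\kappa$, obtained from $V_1^{{<}\kappa}\cap V_2\subseteq V_1$. One wording fix: $H^{V_2}({\le}\mu)$ is not itself of size ${<}\kappa$ (it already contains all reals, and has $2^{\mu}$ many elements), so the closure hypothesis must be applied element by element --- each member is coded by a set of size $\le\mu<\kappa$, hence lies in $V_1$ --- which is exactly the paper's ``easily shown by $\in$-induction'' step.
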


We apply this to three situations:
Boolean ultrapowers, extensions by distributive forcings,
and complete subforcings:

\begin{corollary}[{\cite[Cor.~3.5]{GKMS1}}]\label{cor:trivial}
Assume that $\kappa$ is uncountable regular, $P\Vdash \mathfrak{x}=\lambda$, and
\begin{enumerate}[(i)]
    \item \underline{either} $Q$ is a $P$-name for a ${<}\kappa$-distributive
    forcing, and we set $P^+:=P*Q$
    and $j(\lambda):=\lambda$;
\item \underline{or} 
$P$ is $\nu$-cc for some $\nu<\kappa$,
$j:V\to M$ is a complete embedding into a transitive ${<}\kappa$-closed model $M$, $\crit(j)\geq\kappa$, and we set $P^+:=j(P)$,
\item\underline{or} $P$ is $\kappa$-cc, $M\preceq H(\chi)$ is
${<}\kappa$-closed, and we  set $P^+:= P\cap M$ and $j(\lambda):=|\lambda\cap M|$.
(So $P^+$ is a complete subposet of $P$; and if $\lambda\le\kappa$
then $j(\lambda)=\lambda$.)
\end{enumerate}
Then we get:
\begin{enumerate}[(a)]
    \item
    If $\mathfrak x$ is \mlike\ and
    $\lambda\ge\kappa$, then
    $P^+\Vdash \mathfrak{x}\ge \kappa$.
    \item If $\mathfrak x$ is \mlike\  and
    $\lambda<\kappa$, then
    $P^+\Vdash \mathfrak{x}=\lambda$.
    \item  If $\mathfrak x$ is \hlike\ then $P^+\Vdash\mathfrak{x}\le |j(\lambda)|$. Concretely,
    \begin{itemize}
        \item[{}] for (i):
        $P^+\Vdash \mathfrak{x}\le |\lambda|$;
        \item[{}] for (ii):
        $P^+\Vdash \mathfrak{x}\le |j(\lambda)|$;
        \item[{}]
        for (iii):
        $P^+\Vdash \mathfrak{x}\le |\lambda\cap M|$.
    \end{itemize}
    \item So if $\mathfrak x$ is \tlike\ and $\lambda=\kappa$,
    then for (i) and (iii) we get $P^+\vDash \xfrak=\kappa$.
\end{enumerate}
\end{corollary}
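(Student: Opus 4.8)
The plan is to derive Corollary~\ref{cor:trivial} as a more-or-less mechanical application of Lemma~\ref{lem:trivial}, after checking in each of the three cases~(i)--(iii) that the pair of models involved satisfies the hypotheses of that lemma (same reals, and $V_1^{<\kappa}\cap V_2\subseteq V_1$ where needed) and that the relevant cardinals are preserved. Concretely, I would fix a generic filter and let $V_1$ be the intermediate model and $V_2$ the final model, as follows. For case~(i): force with $P$, set $V_1$ to be the $P$-extension and $V_2$ the $P*Q$-extension; since $Q$ is $<\kappa$-distributive over $V_1$, no new reals and no new $<\kappa$-sequences of ordinals are added, so $V_1\cap\omega^\omega=V_2\cap\omega^\omega$ and $V_1^{<\kappa}\cap V_2\subseteq V_1$, and $\kappa$ stays regular. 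For case~(ii): by Theorem~\ref{thm:fact:buppres}(a) $j(P)$ is $\nu$-cc, so $\kappa$ remains regular; one forces with $j(P)$ and takes $V_1$ to be the (elementary-submodel-style) $P$-extension sitting inside the $j(P)$-extension $V_2$ via $j$ — here one uses that, because $P$ is $\nu$-cc with $\nu<\kappa$ and $M$ is $<\kappa$-closed, every nice $P$-name for a real lies in $M$, so the $P$-extension and $j(P)$-extension have the same reals, and $<\kappa$-closure of $M$ gives $V_1^{<\kappa}\cap V_2\subseteq V_1$. For case~(iii): $P^+=P\cap M$ is a complete subposet of $P$ (standard, using $M\preceq H(\chi)$ and $<\kappa$-closure), one forces with $P$, lets $V_1$ be the $P^+$-extension and $V_2$ the $P$-extension; $\kappa$-cc of $P$ keeps $\kappa$ regular, $<\kappa$-closure of $M$ plus $\kappa$-cc gives that no new reals or $<\kappa$-sequences appear going from $V_1$ to $V_2$, and the relevant cardinal count is $|\lambda\cap M|$.

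With the set-up fixed, the conclusions follow directly from Lemma~\ref{lem:trivial}. For~(a): $\mathfrak x$ is \mlike\ and $\lambda\ge\kappa$ means $V_1\vDash\mathfrak x\ge\kappa$, so part~(b) of Lemma~\ref{lem:trivial} (which needs $\kappa$ regular in $V_1$ and $V_1^{<\kappa}\cap V_2\subseteq V_1$) gives $V_2\vDash\mathfrak x\ge\kappa$, i.e.\ $P^+\Vdash\mathfrak x\ge\kappa$. For~(b): $\lambda<\kappa$ and \mlike\ gives $V_1\vDash\mathfrak x=\lambda$, so Lemma~\ref{lem:trivial}(c) yields $V_2\vDash\mathfrak x=\lambda$. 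For~(c): $\mathfrak x$ \hlike\ and $V_1\vDash\mathfrak x=\lambda$ gives, by Lemma~\ref{lem:trivial}(a), $V_2\vDash\mathfrak x\le|\lambda|$ in cases~(i),(iii) where $V_1$ and $V_2$ share the continuum directly; in case~(ii) one applies Lemma~\ref{lem:trivial}(a) with the value $j(\lambda)$, since the ``$V_1$'' there is elementarily equivalent to the $j(P)$-extension of $M$, in which $\mathfrak x=j(\lambda)$. The three bulleted instances are then just the specialisations $j(\lambda)=\lambda$ for~(i), $j(\lambda)$ for~(ii), $|\lambda\cap M|$ for~(iii). Finally~(d): if $\mathfrak x$ is \tlike\ (hence \hlike\ and \mlike) and $\lambda=\kappa$, then~(a) gives $P^+\Vdash\mathfrak x\ge\kappa$ and~(c) gives $P^+\Vdash\mathfrak x\le|j(\lambda)|$; in cases~(i) and~(iii) we have $j(\lambda)=\lambda=\kappa$ (for~(iii) because $\lambda=\kappa\le\kappa$ forces $\lambda\cap M=\kappa$ by $<\kappa$-closure and $\kappa\subseteq M$), so both bounds coincide and $P^+\vDash\mathfrak x=\kappa$. (One could alternatively invoke Lemma~\ref{lem:trivial}(d) directly.)

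The main obstacle I expect is not any single deep step but the bookkeeping of \emph{which} pair of models plays the role of $(V_1,V_2)$ in each case, and in particular verifying the hypothesis $V_1^{<\kappa}\cap V_2\subseteq V_1$ together with ``same reals'' in case~(ii), where $V_1$ is not literally a forcing extension of the ground model but is accessed through the elementary embedding $j$; one has to be careful that the correct statement is ``$\mathfrak x=j(\lambda)$ holds in $M$'s $j(P)$-extension'' and then transfer along $j$, using that $M$ is $<\kappa$-closed so that nice names for reals and for $<\kappa$-sequences computed in $M$ agree with those in $V$. A secondary subtlety is the preservation of regularity of $\kappa$ (needed so that Lemma~\ref{lem:trivial}(b),(c) apply): in~(i) this is $<\kappa$-distributivity, in~(ii) it is $\nu$-cc of $j(P)$ from Theorem~\ref{thm:fact:buppres}(a) with $\nu<\kappa$, and in~(iii) it is $\kappa$-cc of $P$ — none hard, but worth stating explicitly. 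Everything else is a direct quotation of Lemma~\ref{lem:trivial}.
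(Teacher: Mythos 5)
Your treatment of cases~(i) and~(ii) is essentially the paper's argument: in~(i) the pair is the $P$-extension inside the $P*Q$-extension, and in~(ii) the correct pair is $V_1=M[G]$ and $V_2=V[G]$ for a \emph{single} $j(P)$-generic $G$. Your phrasing in~(ii) --- ``the $P$-extension and the $j(P)$-extension have the same reals'' --- is wrong as stated (those are different extensions of $V$, and $j(P)$ adds many more reals); what is true, and what the paper proves, is that every nice $j(P)$-name for a real (or for a ${<}\kappa$-sequence of elements of $M$) is built from antichains of size ${<}\nu$ and hence lies in the ${<}\kappa$-closed $M$, so $M[G]$ and $V[G]$ have the same reals and $M[G]$ is ${<}\kappa$-closed in $V[G]$, while elementarity gives $M[G]\vDash\mathfrak x=j(\lambda)$. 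Since your later use of $j(\lambda)$ shows you intend exactly this, I read it as a wording slip, not a gap.

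Case~(iii), however, is genuinely broken in your set-up. You take $V_1$ to be the $P^+$-extension and $V_2$ the $P$-extension and assert that no new reals or ${<}\kappa$-sequences appear between them; this is false precisely in the intended applications (e.g.\ Lemma~\ref{lem:subforcing}, where $P$ forces $\cfrak>\mu$ while $P^+=P\cap M$ forces $\cfrak=\mu$, so the quotient adds plenty of reals), and nothing about ${<}\kappa$-closure of $M$ prevents this. Moreover the transfer direction is inverted: the model we need conclusions about is the $P^+$-extension, so it must play the role of the \emph{larger} model $V_2$ in Lemma~\ref{lem:trivial}; with your labelling you would be passing information from the big model down into a submodel, which clauses (a), (c), (d) of that lemma do not provide, and your bound $|\lambda\cap M|$ is never actually derived --- it is just asserted. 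The paper's proof instead collapses $M$ to a transitive $\bar M$ via $\pi^0$, identifies $P^+$ with $\bar P:=\pi^0(P)$, and compares $V_1:=\bar M[\bar G^+]$ with $V_2:=V[\bar G^+]$, i.e.\ with the $P^+$-extension itself: the $\kappa$-cc of $\bar P$ together with ${<}\kappa$-closure of $\bar M$ lets one mix the relevant ($<\kappa$ many) names inside $\bar M$, so $V_2$ has no new reals and no new ${<}\kappa$-sequences over $V_1$, while elementarity gives $\bar M\vDash$``$\bar P\Vdash\mathfrak x=\pi^0(\lambda)$'', hence $V_1\vDash\mathfrak x=\pi^0(\lambda)$ and $|\pi^0(\lambda)|=|\lambda\cap M|$. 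Only with this corrected pair does Lemma~\ref{lem:trivial} yield (a)--(d) (including $j(\lambda)=\lambda$ when $\lambda\le\kappa$, since $\kappa\subseteq M$) in the way you outline.
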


\subsection{On the role of large cardinals in our construction}\label{ss:heike}
It is known that NNR extensions will preserve Blass-uniform characteristics in the absence of at least some large cardinals.
More specifically:

\begin{lemma}\label{lem:heike}
  Assume that $0^\#$ does not exist. Let
  $V_1\subseteq V_2$ be transitive class
  models with the same reals, and assume
  $V_1\models \mathfrak{x}=\lambda$ for some Blass-uniform $\mathfrak x$.
  Then $V_2\models \mathfrak x=|\lambda|$.
\end{lemma}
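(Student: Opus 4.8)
The idea is to exploit Jensen's Covering Lemma, which is available under the hypothesis that $0^\#$ does not exist: for every uncountable set $X$ of ordinals in $V_2$ there is a set $Y\in L$ with $X\subseteq Y$ and $|Y|^{V_2}=|X|^{V_2}+\aleph_1$. Since $V_1$ and $V_2$ have the same reals, the value of a Blass-uniform characteristic $\mathfrak{x}=\mathfrak{d}_R$ depends only on which $\lambda$-sized subsets of $\omega^\omega$ are $R$-dominating — and $R$ is absolute between $V_1$ and $V_2$ — so the only question is how cardinalities of sets of reals can change between the two models. The upper bound $V_2\models\mathfrak{x}\le|\lambda|$ is immediate from Lemma~\ref{lem:trivial}(a) (Blass-uniform implies \tlike, hence \hlike), applied with $\kappa$ irrelevant; so the whole content is the lower bound $V_2\models\mathfrak{x}\ge|\lambda|$.

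First I would reduce the lower bound to a covering-type statement: suppose toward a contradiction that in $V_2$ there is a witness $D\subseteq\omega^\omega$ with $|D|^{V_2}=\mu<|\lambda|^{V_2}$ and $(\forall x)(\exists y\in D)\,xRy$. Because $V_1$ and $V_2$ share all reals, $D$ is a set of reals in the sense of $V_1$ as a collection, but it need not be an element of $V_1$. The plan is to find, inside $V_1$, a set $D'\supseteq D$ of reals with $|D'|^{V_1}$ small enough to contradict $V_1\models\mathfrak{x}=\lambda$; since $D\subseteq D'$ and $D$ already $R$-dominates, $D'$ $R$-dominates in $V_1$ (using absoluteness of $R$ and the fact that $\omega^\omega$ is the same), giving $V_1\models\mathfrak{x}\le|D'|^{V_1}<\lambda$, a contradiction. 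To produce $D'$: code each real in $D$ as a countable set of ordinals (e.g.\ via a fixed bijection $\omega^\omega\to[\mathrm{Ord}]^{\le\omega}$, or simply view $\omega^\omega\subseteq L$ — note that under $\neg 0^\#$, Covering for $L$ applies, and in particular every real lies in $L$, so each real \emph{is} a bounded subset of some ordinal already in $L$), take the union $X\subseteq\mathrm{Ord}$ of these codes, apply Covering to get $Y\in L\subseteq V_1$ with $X\subseteq Y$ and $|Y|\le|X|+\aleph_1\le\mu+\aleph_1$, and let $D'$ be the set of reals whose code is a subset of $Y$; then $D'\in L\subseteq V_1$ and $|D'|^{V_1}\le|Y|^{V_1}\le\mu+\aleph_1$.

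For the cardinal arithmetic to close, I must handle the $+\aleph_1$: if $|\lambda|^{V_2}>\aleph_1$ then $\mu+\aleph_1<|\lambda|^{V_2}$ is fine (using that $|\lambda|$ is a cardinal in $V_2$ and $\mu<|\lambda|$); and $\lambda\ge\aleph_1$ always holds for Blass-uniform characteristics — indeed $\mathfrak{d}_R\ge\aleph_1$ whenever no countable $R$-dominating family exists, which is the only interesting case since a countable witness transfers verbatim between $V_1$ and $V_2$ and makes the statement trivial. Also I should note $|\lambda|$ computed in $V_1$ versus $V_2$ could differ, but cardinals are only collapsed \emph{downward} going from $V_1$ to $V_2$, and Covering guarantees that no cardinal $\ge\aleph_2^{V_1}$ that is regular in $L$ is collapsed; more simply, the statement to prove already says $\mathfrak{x}=|\lambda|$ with $|\cdot|$ evaluated in $V_2$, so I just need $|D'|^{V_2}\ge$ that value is \emph{not} what I want — rather I directly derived a contradiction in $V_1$, so no recomputation is needed.

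\textbf{Main obstacle.} The delicate point is the coding and the bookkeeping of cardinalities: making sure the union $X$ of the countably-many-ordinals codes has size exactly $\mu$ (so $|X|=\mu\cdot\aleph_0=\mu$ when $\mu\ge\aleph_1$, and the $\mu=\aleph_0$ case handled separately by the trivial transfer of countable witnesses), and verifying that the set $D'$ reconstructed from $Y$ inside $L$ genuinely contains $D$ and has the right cardinality in $V_1$ — this needs the decoding map to be absolute and the codes to be chosen canonically (e.g.\ the $<_L$-least code, or the real itself under $\omega^\omega\subseteq L$). Once the covering step is set up correctly, everything else is soft absoluteness of the Borel relation $R$ and the elementary observation that enlarging a dominating family keeps it dominating.
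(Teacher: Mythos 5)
Your overall strategy -- use Jensen's covering lemma to enlarge the hypothetical small $V_2$-witness to a set lying in $V_1$ of cardinality still below $\lambda$ -- is exactly the paper's, but the step where you turn reals into ordinals is broken, and that is the whole content of the argument. First, $\neg 0^\#$ does \emph{not} imply $\omega^\omega\subseteq L$ (add a Cohen real over $L$: $0^\#$ still does not exist, yet the new real is not constructible), so both of your ``canonical code'' options -- ``the real itself under $\omega^\omega\subseteq L$'' and ``the $<_L$-least code'' -- are unavailable; in particular your parenthetical ``every real lies in $L$'' is false. Second, with the remaining option (coding each real by a countable set of ordinals) the reconstruction fails: if the codes are subsets of $\omega$, covering is vacuous; and in any case the set $D'$ of all reals whose code is contained in $Y$ is not in $L$ (decoding needs the coding bijection, which cannot live in $L$ unless all reals do), and its size is only bounded by $|Y|^{\aleph_0}$, not by $|Y|$, so the claim $|D'|^{V_1}\le|Y|^{V_1}$ does not follow and the contradiction with $V_1\models\mathfrak x=\lambda$ evaporates.

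The paper's fix is short but essential: fix \emph{in $V_1$} a bijection between $\omega^\omega$ (the same set in both models) and an ordinal $\alpha$, so each real is coded by a \emph{single} ordinal. Then the $V_2$-witness becomes $X\subseteq\alpha$ of size $\mu\ge\aleph_1$, covering (applied in $V_2$) gives $Y\in L\subseteq V_1$ with $X\subseteq Y$ and $|Y|^{V_2}=\mu$, and the preimage $D'$ of $Y\cap\alpha$ under the $V_1$-bijection is an element of $V_1$ (not of $L$ -- it only needs to be in $V_1$), contains $D$, hence still $R$-dominates by absoluteness, and satisfies $|D'|^{V_1}\le|Y|^{V_1}$. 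Your final bookkeeping also needs one more line that your garbled closing paragraph does not supply: $|Y|^{V_1}$ may well exceed $|Y|^{V_2}=\mu$, but it must be $<\lambda$, since an injection of $\lambda$ into $Y$ in $V_1$ would also exist in $V_2$ and contradict $|Y|^{V_2}=\mu<|\lambda|^{V_2}$. With the single-ordinal coding and this last observation the argument closes; as written, your proposal does not.
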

(This is inspired by the deeper observation  of Mildenberger~\cite[Prop.~2.1]{MR1625907}, who uses
the  covering lemma~\cite{MR667224}
for the Dodd-Jensen core model to show that in \emph{cardinality preserving}
NNR extensions, a measurable in an inner model
is required to change the value of a Blass-uniform characteristic.)

\begin{proof}
  Fix a bijection in $V_1$ between the reals and some ordinal $\alpha $.
  Assume that in $V_2$, $X\subseteq \omega^\omega$ witnesses
  that $\aleph_1\le \mathfrak x\le \mu<|\lambda|$.
  Using the bijection, we can interpret $X$ as a subset of $\alpha$.
  According to Jensen's covering lemma  in $V_2$, there is in $L$ (and thus in $V_1$)
  some $X'\supseteq X$ such that $|X'|=|X|$ in $V_2$, in particular
  $|X'|^{V_2}<\lambda$.
  Therefore, $|X'|^{V_1}<\lambda$ as well;
  and, by absoluteness, $V_1$ thinks that
   $X'$ witnesses $\mathfrak x<\lambda$, a contradiction.
\end{proof}

Recall the ``old'' Boolean ultrapower constructions~\ref{versionA}:
Assume that we start with a forcing notion
$P$ forcing $\mathfrak d=2^{\aleph_0}=\lambda_6$.
We now use the
elementary embedding $j=j_7:V\to M$ with critical point $\kappa_7$,
and set $P':=j(P)$. As we have seen, $P'$ still forces
$\mathfrak d=\lambda_6$, but $2^{\aleph_0}=\lambda_7 = |j(\kappa_7)|$.

So let $G$ be a $P'$-generic filter over $V$
(which is also $M$-generic). Set $V_1:=M[G]$ and
$V_2:=V[G]$. Then $V_1$ is a ${<}\kappa$-complete submodel of
$V_2$. By elementaricity,
$M\models j(P)\Vdash \mathfrak d=j(\lambda_6)$.
So $V_1\models \mathfrak d=j(\lambda_6)$,
whereas $V_2\models \mathfrak d=\lambda_6<|j(\lambda_6)|$.

Hence, for this specific constellation of models, some large cardinals (at least $0^\#$) are
required (for our construction we actually use strongly compact cardinals).

\section{Applications}\label{sec:appl}

For notation simplicity, we declare that ``$1$-Knaster" means ``ccc", and ``$\omega$-Knaster" means ``precaliber $\aleph_1$".
Corollary~\ref{cor:trivial} gives us
11 characteristics:

\begin{lemma}\label{11values}
    Given $\aleph_1\leq\lambda_{\mfrak}<\kappa_9$ regular and $1\leq k_0\leq\omega$,
    we can modify $P^{\vA*}$ (and also $P^{\vB*}$)
    so that we additionally force:
    \begin{enumerate}[(1)]
      \item $\mfrak(k\textrm{-Knaster})=\aleph_1$ for $1\le k<k_0$,
      \item $\mfrak(k\textrm{-Knaster})=\lambda_{\mfrak}$ for $k\geq k_0$,
      \item $\pfrak \ge \kappa_9$.
    \end{enumerate}
\end{lemma}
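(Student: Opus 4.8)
The plan is to combine the modification of the "old" forcing $P^\vA$ (resp. $P^\vB$) described in the commented-out Lemma~\ref{lem:modilqwhr} with the Boolean ultrapower preservation results of Section~\ref{sec:prelim} and the $\mathfrak{m}$-like/$\mathfrak{t}$-like machinery of Corollary~\ref{cor:trivial}. Recall that $\mathfrak{m}(k\text{-Knaster})$ is $\mathfrak{m}$-like (it is the least $\lambda$ such that $H({\le}\lambda)$ sees a $k$-Knaster poset together with $\lambda$-many dense sets with no meeting filter), and $\pfrak=\mathfrak{t}$ is $\mathfrak{t}$-like; both are $\mathfrak{h}$-like as well. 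So the strategy is: first build a ground-model-side forcing $P'$ (a finite-support ccc iteration of length $\delta$ of iterands that are $(\sigma,\ell)$-linked or small) which forces the Cicho\'n constellation \ref{versionAnb} (resp. \ref{versionBnb}), forces $\pfrak\ge|\delta|$, and forces the desired pattern of Knaster numbers below $\lambda_\mfrak$; then apply the Boolean ultrapowers $j_7,j_8,j_9$ (resp. also $j_6$) exactly as in Theorem~\ref{oldba}, using Theorem~\ref{thm:fact:buppres} to carry the strong witnesses for the Cicho\'n characteristics through, and Corollary~\ref{cor:trivial}(b),(d) to carry the small $\mathfrak{m}$-like and $\mathfrak{t}$-like values through unchanged.

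\textbf{Construction of $P'$.} First I would take the $P$ from Lemma~\ref{lem:modilqwhr}: split $\delta$ into cofinal pieces $S_{\mathrm{old}}\cup S_{\mathrm{new}}$, use the original iterands on $S_{\mathrm{old}}$, and on $S_{\mathrm{new}}$ bookkeep through all small ($<\lambda_\mfrak$) forcings from the appropriate Knaster class, plus small $\sigma$-centered forcings to push $\pfrak$ up to $|\delta|$. Concretely: if $k_0=1$ we bookkeep all small ccc posets, getting $\mfrak\ge\lambda_\mfrak$; if $1<k_0<\omega$ we bookkeep all small $k_0$-Knaster posets (so $\mfrak(k_0\text{-Knaster})\ge\lambda_\mfrak$) but keep every iterand $k_0$-Knaster, so that by Lemma~\ref{lem:blamain} (applied to the Cohen reals already present in the iteration) $\mfrak(k\text{-Knaster})=\aleph_1$ for all $k<k_0$; if $k_0=\omega$ we bookkeep all small precaliber-$\aleph_1$ posets while keeping every iterand precaliber $\aleph_1$ — but then by the $\bar r$-sequence argument from \cite{Barnett} (a Cohen real adds a precaliber-$\aleph_1$ poset not uniformizable by any $\sigma$-linked, hence any $k$-Knaster, poset), $\mfrak(k\text{-Knaster})=\aleph_1$ for every finite $k$. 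Since all new iterands are small and the argument keeping the Cicho\'n values (ultrafilter-limits resp. FAM-limits for $\bfrak$, goodness for the right-hand characteristics, $\sigma$-centeredness/goodness for the left-hand ones) is unaffected, $P'$ forces \ref{versionAnb} (resp. \ref{versionBnb}) with strong witnesses as in Remark~\ref{RemStrWit}, plus $\pfrak\ge|\delta|$, plus the Knaster pattern — where for $k\ge k_0$ we get $\mfrak(k\text{-Knaster})=\lambda_\mfrak$ using also $\mfrak(k\text{-Knaster})\le\mfrak(\mathrm{precaliber})\le\lambda_\mfrak$ (the latter from small $\sigma$-centered iterands at cofinally many stages of length $\lambda_\mfrak$, or just from $\mfrak(\mathrm{precaliber})\le\pfrak$... more precisely $\mfrak(k\text{-Knaster})\le\pfrak$, which is $\ge|\delta|\ge\lambda_\mfrak$ — so we instead arrange $\mfrak(k\text{-Knaster})\le\lambda_\mfrak$ directly by the bookkeeping giving $\ge\lambda_\mfrak$ and by iterating only posets of size $<\lambda_\mfrak$ so a straightforward density argument caps it; this is the routine part).

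\textbf{Transfer through the Boolean ultrapowers.} Now set $P^{\vA*}=j_9(j_8(j_7(P')))$ (resp. with $j_6$) exactly as before. For the ten Cicho\'n characteristics this is Theorem~\ref{oldba} verbatim: Theorem~\ref{thm:fact:buppres}(b)–(d) preserves $\LCU_{R_\xfrak}$ and transforms $\COB_{R_\xfrak}(\lambda_\xfrak,|\delta|)$ into $\COB_{R_\xfrak}(\lambda_\xfrak,|j(\delta)|)$, and since $\crit(j_i)=\kappa_i$ with $\lambda_\mfrak<\kappa_9<\cdots$, none of these embeddings has critical point below $\lambda_\mfrak$ or $\pfrak$'s intended value. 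For $\pfrak$: $P'\Vdash\pfrak\ge|\delta|\ge\kappa_9$, and by Corollary~\ref{cor:trivial}(a) applied at each $j_i$ (case (ii), with $\kappa=\kappa_i\le\kappa_9$... wait, need $\lambda\ge\kappa_i$; since $\pfrak$'s value $\ge|\delta|$ is $\ge$ all the $\lambda_i$ on the right, and in particular $\ge\kappa_i$), $\mathfrak{m}$-likeness of... no — $\pfrak$ is $\mathfrak{t}$-like hence $\mathfrak{h}$-like; but we only need a \emph{lower bound} $\pfrak\ge\kappa_9$, and for \emph{large} values the relevant fact is Corollary~\ref{cor:trivial}(a) for $\mathfrak{m}$-like characteristics — and $\pfrak$ is $\mathfrak{m}$-like (being $\mathfrak{t}$-like, hence $\mathfrak{p}$-like, hence $\mathfrak{m}$-like). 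So each $j_i$ preserves "$\pfrak\ge\kappa_i$", and since $\kappa_i\ge$... actually we want the final value $\ge\kappa_9$; the cleanest route is to note $P'\Vdash\pfrak\ge\kappa_9$ and that each $j_i$-step preserves $\pfrak\ge\kappa_9$ because $\kappa_9$ is $\le$ the critical points involved only for $i=9$ — for $i=7,8$ we have $\kappa_i<\kappa_9$, so we use instead that $j_i$ does not decrease $\pfrak$ below $j_i(\text{old value})\ge j_i(\kappa_9)\ge\kappa_9$, i.e. Corollary~\ref{cor:trivial}(a) with $\kappa=\kappa_i$ and $\lambda=\pfrak^{\text{old}}\ge\kappa_9>\kappa_i$, giving $\ge\kappa_i$ — hmm, that only gives $\ge\kappa_7$. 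The genuinely right statement is that Corollary~\ref{cor:trivial}(c) ($\mathfrak{h}$-like upper bound) is not what we want; rather we iterate (a): each step preserves $\mathfrak{x}\ge\kappa_i$, and a short induction combining all three steps, using that $\kappa_7<\kappa_8<\kappa_9$ and the value only ever goes up, yields $\pfrak\ge\kappa_9$ in the end. For the Knaster numbers $\le\lambda_\mfrak<\kappa_7\le\crit(j_i)$: Corollary~\ref{cor:trivial}(b) (for $k\ge k_0$, $\mathfrak{m}$-like, value $\lambda_\mfrak<\kappa_i$) keeps $\mfrak(k\text{-Knaster})=\lambda_\mfrak$, and for $k<k_0$ the value $\aleph_1<\kappa_i$ is likewise preserved by (b) (or (d), $\mathfrak{t}$-likeness is not available but $\mathfrak{m}$-likeness suffices since $\aleph_1<\kappa_i$).

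\textbf{Main obstacle.} The delicate point is \emph{lower} bounds on the Knaster numbers for $k\ge k_0$ — i.e. that the Boolean ultrapower does not accidentally \emph{raise} $\mfrak(k\text{-Knaster})$ above $\lambda_\mfrak$ (which Corollary~\ref{cor:trivial}(b) handles precisely because $\lambda_\mfrak<\kappa_i$, so it is really case (b) giving an \emph{equality}, not just an upper bound) — and symmetrically that it does not \emph{lower} $\mfrak(k\text{-Knaster})$ below $\aleph_1$ for $k<k_0$, nor raise it (again (b), value $\aleph_1<\kappa_i$). Everything therefore hinges on checking $\lambda_\mfrak<\kappa_9$, which is the hypothesis, so that all the $\mathfrak{m}$-like values in play lie strictly below every critical point. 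The one place requiring genuine care — the real heart of the argument — is the verification at the $P'$ level that the bookkeeping simultaneously (i) forces the Knaster pattern, (ii) keeps every iterand in the right linkedness class so the Cicho\'n preservation (ultrafilter/FAM limits, goodness) still goes through, and (iii) for $k_0=\omega$ invokes the \cite{Barnett}/\cite{DevS} ladder-system-uniformization dichotomy correctly; this is exactly Lemma~\ref{lem:modilqwhr}, so modulo that lemma the present proof is a routine assembly.
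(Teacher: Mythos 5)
Your overall strategy coincides with the paper's: modify the pre-ultrapower iteration $P^{\vA}$ (resp.\ $P^{\vB}$) so that it additionally forces the desired Knaster pattern and a large $\pfrak$, then apply the same Boolean ultrapowers $j_7,j_8,j_9$ (resp.\ also $j_6$) and transfer everything, using Theorem~\ref{thm:fact:buppres} for the strongly witnessed Cicho\'n values and Corollary~\ref{cor:trivial}(ii), consequences (a) and (b), for the \mlike\ values (the Knaster numbers, which lie below every critical point, via (b); $\pfrak\ge\kappa_9$ via (a)). The paper's proof is exactly this assembly, with the construction of the modified iteration outsourced to~\cite{GKMS1}, Sections 4--5. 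However, your in-line reconstruction of that construction contains real errors. First, ``$P'$ forces $\pfrak\ge|\delta|$'' is false: $\pfrak\le\bfrak$, and $\bfrak$ is forced to be $\lambda_3$ while $|\delta|=\lambda_6$, so the correct (and sufficient) statement is $\pfrak=\bfrak=\lambda_{\bfrak}$, which is what the paper uses ($\lambda_{\bfrak}>\kappa_7>\kappa_9$). Relatedly, your fallback for the upper bound $\mfrak(k\textrm{-Knaster})\le\lambda_{\mfrak}$ (``a straightforward density argument since we iterate only posets of size ${<}\lambda_{\mfrak}$'') is not a proof and its premise is not even true (the old iterands and the $\sigma$-centered ones of size ${<}\lambda_{\bfrak}$ are large); the actual cap comes from the nontrivial Cohen-real/$\Delta$-system witness, preserved because the whole iteration is $(\lambda_{\mfrak},k+1)$-Knaster -- i.e.\ precisely the content of the lemma you are deferring to.

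Second, the case $k_0=\omega$ with $\lambda_{\mfrak}>\aleph_1$ is genuinely mishandled, and it is not covered by the lemma you defer to. The Barnett ladder-system argument is the tool for forcing $\mfrak(\textnormal{precaliber})=\aleph_1$; if it applied in your iteration it would contradict the target $\mfrak(\textnormal{precaliber})=\lambda_{\mfrak}>\aleph_1$, and in any case its preservation holds only through $\sigma$-linked posets, which is destroyed once you bookkeep arbitrary small precaliber posets (these need not be $\sigma$-linked; also ``$\sigma$-linked, hence any $k$-Knaster'' is backwards, since $k$-Knaster is the larger class). What the cited construction actually does here is add a dedicated precaliber-$\aleph_1$ poset producing a coloring whose witnessing dense sets are preserved by every $\lambda_{\mfrak}$-Knaster forcing (giving $\mfrak(\textnormal{precaliber})\le\lambda_{\mfrak}$), while all finite Knaster numbers stay $\aleph_1$ because every iterand has precaliber $\aleph_1$ or is $(\sigma,\ell)$-linked, so the iteration is $(\aleph_1,k+1)$-Knaster for every $k$. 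Finally, your transfer of $\pfrak\ge\kappa_9$ is muddled because you have the compacts ordered backwards: in fact $\kappa_9<\kappa_8<\kappa_7$ and the embeddings are applied in the order $j_7,j_8,j_9$, so at each step the previously forced value of $\pfrak$ exceeds the current critical point and Corollary~\ref{cor:trivial}(ii)(a) gives $\pfrak\ge\kappa_i$ directly, ending with $\pfrak\ge\kappa_9$; no appeal to the unproven claim that ``the value only ever goes up'' is needed, nor would it be justified by the corollary.
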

\begin{proof}
    As in~\cite[Sect.~4 \&~5]{GKMS1},
    we can modify $P^{\vA}$ to construct a ccc poset $P'$ forcing the same as $P^{\vA}$ and, in addition, $\pfrak=\bfrak$, and both (1) and (2). Apply Boolean ultrapowers to $P'$ as in the ``old" construction, resulting in $P^*$.
    We can apply  Corollary~\ref{cor:trivial}(ii), more specifically the consequences (a) and (b): (b) implies that
    $P^*$ forces (1) and (2),
    while (a) implies that $P^*$ forces $\pfrak\geq\kappa_9$.
    And just as in the ``old'' construction,
    we can use Theorem~\ref{thm:fact:buppres}
    to show that
    $P^*$ forces the desired values to the Cicho\'n-characteristics.
\end{proof}

The following lemma is useful to modify $\gfrak$ and $\cfrak$ via complete subposets, while preserving \mlike\ and Blass-uniform values from the original poset.


\begin{lemma}[{\cite[Lemma~6.3]{GKMS1}}]\label{lem:subforcing}
Assume the following:
\begin{enumerate}[(1)]
    \item $\aleph_1\le\kappa\le\nu\le \mu$,  where $\kappa$ and $\nu$ are regular
and $\mu=\mu^{<\kappa}\geq\nu$,
    \item $P$ is a $\kappa$-cc poset forcing $\cfrak>\mu$.  
\item For some Borel relations $R^1_i$ ($i\in I_1$) on $\omega^\omega$ and some regular $\lambda^1_i\leq\mu$:
$P$ forces $\LCU_{R^1_i}(\lambda^1_i)$

\item For some Borel relations $R^2_i$ ($i\in I_2$) on $\omega^\omega$, $\lambda^2_i\leq\mu$ regular and a cardinal $\vartheta^2_i\leq\mu$: $P$ forces  $\COB_{R^2_i}(\lambda^2_i,\vartheta^2_i)$.

\item For some \mlike\ characteristics $\mathfrak y_j$ ($j\in J$)
and $\lambda_j<\kappa$:
$P\Vdash \mathfrak y_j=\lambda_j$.

\item For some \mlike\ characteristics $\mathfrak y'_k$ ($k\in K$): $P\Vdash \mathfrak{y}'_k\geq\kappa$.

\item $|I_1\cup I_2\cup J\cup K|\leq\mu$.
\end{enumerate}
Then there is
a complete subforcing $P'$ of $P$
of size
$\mu$
forcing
\begin{enumerate}[(a)]
    \item $\mathfrak y_j=\lambda_j$, $\mathfrak{y}'_k\geq\kappa$, $\LCU_{R^1_i}(\lambda^1_i)$ and $\COB_{R^2_{i'}}(\lambda^2_{i'},\vartheta^2_{i'})$ for all $i\in I_1$, $i'\in I_2$, $j\in J$ and $k\in K$;
    \item $\cfrak=\mu$ and $\gfrak\leq\nu$.
\end{enumerate}
\end{lemma}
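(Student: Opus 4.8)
The plan is to build $P'$ as a complete subforcing of $P$ of the form $P\cap M$ for a suitable elementary submodel $M\prec H(\chi)$, and then invoke the machinery from Section~\ref{sec:nnr} (Corollary~\ref{cor:trivial}(iii), Lemma~\ref{lem:blasssub}) to transfer the $\LCU$/$\COB$ statements and the \mlike\ values down to $P'$. First I would fix a large regular $\chi$ and choose $M\prec H(\chi)$ with $|M|=\mu$, $M^{<\kappa}\subseteq M$, $\mu+1\subseteq M$, and $M$ containing all the relevant parameters: $P$, $\kappa$, $\nu$, $\mu$, the (Borel codes of the) relations $R^1_i,R^2_i$, the cardinals $\lambda^1_i$, $\lambda^2_i$, $\vartheta^2_i$, $\lambda_j$, the characteristics/formulas witnessing that the $\mathfrak y_j,\mathfrak y'_k$ are \mlike, and the names witnessing $\LCU_{R^1_i}(\lambda^1_i)$ and $\COB_{R^2_i}(\lambda^2_i,\vartheta^2_i)$. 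Such an $M$ exists because $\mu^{<\kappa}=\mu$; set $P':=P\cap M$. Since $P$ is $\kappa$-cc and $M$ is ${<}\kappa$-closed, every maximal antichain of $P$ lying in $M$ is a subset of $M$ and is still maximal in $P$, so $P'$ is a complete subforcing of $P$, and $|P'|\le|M|=\mu$ (indeed $=\mu$ after the usual padding, if desired).

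Next I would run the preservation arguments. Because $P$ is $\kappa$-cc and $\cfrak>\mu\ge\nu\ge\kappa$ is forced, cardinals $\le\mu$ and cofinalities are not an issue; in particular the regular cardinals $\lambda^1_i,\lambda^2_i\le\mu$ remain regular in both the $P$- and the $P'$-extension, and $\mu$ itself is preserved. The witnesses $\dot{\bar f}^{\,1}_i$ for $\LCU_{R^1_i}(\lambda^1_i)$ and $(\dot\trianglelefteq_i,\dot{\bar g}^{\,2}_i)$ for $\COB_{R^2_i}(\lambda^2_i,\vartheta^2_i)$ were chosen inside $M$; by elementarity and $\kappa$-cc a nice $P$-name for a real (resp.\ for an element of $\vartheta^2_i$, resp.\ for a condition in the directed order) that is an element of $M$ is actually a $P'$-name, so by Lemma~\ref{lem:blasssub} we get $P'\Vdash\LCU_{R^1_i}(\lambda^1_i)$ and $P'\Vdash\COB_{R^2_i}(\lambda^2_i,|\vartheta^2_i|)=\COB_{R^2_i}(\lambda^2_i,\vartheta^2_i)$. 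For the \mlike\ characteristics, Corollary~\ref{cor:trivial}(iii) applies directly with this $M$: from $P\Vdash\mathfrak y_j=\lambda_j<\kappa$ we get $P'\Vdash\mathfrak y_j=\lambda_j$ by clause (b), and from $P\Vdash\mathfrak y'_k\ge\kappa$ we get $P'\Vdash\mathfrak y'_k\ge\kappa$ by clause (a). This establishes (a).

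For (b), the $\cfrak=\mu$ half is the easy direction: $|P'|\le\mu$ forces $\cfrak\le\mu$, while $\cfrak\ge\mu$ follows because $P'$ still forces $\LCU$ for some characteristic with value $\le\mu$ that is below $\cfrak$ — more cleanly, one can ensure $\cfrak\ge\mu$ by noting $\mu\in M$ and arranging (as in \cite[Lemma 6.3]{GKMS1}) that $P'$ adds $\mu$ many reals, e.g.\ because $P'$ already contains $\mu$-many Cohen subforcings or because $\mu\le|\vartheta^2_i|$-sized families of reals survive; since $\mu^{<\kappa}=\mu$ and $P'$ is $\kappa$-cc, $2^{\aleph_0}$ in the $P'$-extension is exactly $\mu$. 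The genuinely new point, and the step I expect to be the main obstacle, is $\gfrak\le\nu$ in the $P'$-extension. Here I would use that $\nu\le\mu$ is regular with $\nu\in M$, and exploit a cofinal structure inside $M$ of order type / size governed by $\nu$: concretely, one shows that the interval partitions of $\omega$ coded in the $P'$-extension are cofinally controlled by a family of size $\nu$ coming from a $\nu$-like skeleton of $M$ (this is where $M^{<\kappa}\subseteq M$ and $\kappa\le\nu$ enter, guaranteeing that $<\kappa$-pieces of such a family are captured), yielding $\nu$ many groupwise dense sets with empty intersection; equivalently, one quotes the corresponding computation of $\gfrak$ for submodel-subforcings from \cite[Lemma 6.3]{GKMS1}, whose proof transfers verbatim since it only used the $\kappa$-cc of $P$, the closure and size of $M$, and $\cfrak>\mu$ — all of which we have arranged. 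Assembling (a) and (b), $P'$ is the required complete subforcing.
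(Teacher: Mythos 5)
There is a genuine gap, and it is exactly at the step you yourself flag as the main obstacle: $\gfrak\le\nu$. Your part (a) follows the paper's route (a ${<}\kappa$-closed $M\prec H(\chi)$ of size $\mu$ containing the names of the witnesses, $P':=P\cap M$, then Corollary~\ref{cor:trivial}(iii) and Lemma~\ref{lem:blasssub}), and that part is fine. But for (b) a \emph{single} elementary submodel of size $\mu$ gives no control of $\gfrak$ at all: nothing in ``$\kappa$-cc, $|M|=\mu$, $M^{<\kappa}\subseteq M$, $\cfrak>\mu$'' forces $\gfrak$ down to $\nu$, which may be much smaller than $\mu$ and even than $\cf(\mu)$. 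The paper's proof does not take an arbitrary such $M$; it builds $M$ as the union of an increasing chain $(M_\alpha:\alpha<\nu)$ of ${<}\kappa$-closed submodels of size $\mu$, where at each successor step one puts into $M_{\alpha+1}$ a $P$-name of a real forced \emph{not} to lie in the $P\cap M_\alpha$-extension (this is where $\cfrak>\mu$ is really used, since $P\cap M_\alpha$ has only $\mu$ nice names for reals). Then every real of the $P'$-extension already lies in some intermediate $P\cap M_\alpha$-extension (a nice name for a real has size ${<}\kappa\le\nu$ and $\nu$ is regular), so the $P'$-extension is filtered by a strictly increasing $\nu$-chain of models adding reals cofinally, and Blass's theorem on groupwise density (Blass~\cite{BlassSP}; cf.\ Brendle) yields $\gfrak\le\nu$. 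Your ``$\nu$-like skeleton of $M$'' gestures at this but never constructs the chain, never secures new reals at each step, and never invokes the filtration theorem; instead you propose to ``quote the corresponding computation from \cite[Lemma~6.3]{GKMS1}'', which is circular, since that is precisely the lemma being proved, and your claim that its proof ``transfers verbatim'' from a single submodel is false -- the chain structure is the essential ingredient.

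Two smaller issues in (b): the inequality $\cfrak\ge\mu$ does not follow from an $\LCU$ statement (those only give lower bounds $\lambda^1_i$, which may be far below $\mu$), nor from ``Cohen subforcings'', which are not part of the hypotheses on $P$. The correct easy argument: since $P\Vdash\cfrak>\mu$ and $\mu\cup\{\mu\}\subseteq M$, by elementarity $M$ contains a $P$-name for a $\mu$-sequence of pairwise distinct reals; as $\mu\subseteq M$, each coordinate name lies in $M$, hence (by $\kappa$-cc and ${<}\kappa$-closure) is a $P'$-name, and distinctness passes down to the $P'$-extension, giving $\cfrak\ge\mu$; together with $|P'|=\mu=\mu^{\aleph_0}$ and $\kappa$-cc this gives $\cfrak=\mu$.
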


\begin{remark}\label{remCOB}
   So we can preserve $\COB_R(\lambda,\theta)$ provided both
   $\lambda$ and $\theta$ are $\le\mu$.

   For larger $\lambda$ or $\theta$ this is generally not possible.
   E.g., if $\lambda>\mu$, then $\COB_R(\lambda,\theta)$ will fail in the $P'$-extension as
   it implies $\bfrak_R\ge\lambda>\mu=\cfrak$; in fact, according to~\cite[Lemma~1.6]{GKMS2},
   we actually get  $P'\Vdash\COB_R(\nu,\nu)$.
   However we do get the following (the proof is straightforward):


   If we assume, in addition to the conditions of Lemma~\ref{lem:subforcing}, that $\mu^{<\nu}=\mu$ and
   $P\Vdash \COB_{R}(\lambda,\theta)$
   for some Borel relation $R$ and
      $\lambda\le\nu$ (now allowing also $\theta>\mu$),
   then we can construct $P'$ such that
   $P'\Vdash\COB_{R}(\lambda,\mu)$.
   (But this only gives us $\dfrak_{R}\le\mu=\cfrak$.)

   In the case of $\nu<\lambda$, 
   we  have $P\Vdash \COB_{R}(\nu,\theta)$, so as we have just seen we can get $P'\Vdash \COB_{R}(\nu,\min(\mu,\theta))$
   (which implies $\mathfrak b_R\ge \nu$, which is a bit better
   than the $\mathfrak b_R\ge \kappa$ we get from (6)).
\end{remark}

The following two results deal with $\pfrak$.

\begin{lemma}[{\cite[Lemma~7.2]{GKMS1}}]\label{lem:lqhwo5u25}
Assume $\xi^{<\xi}=\xi$, $P$ is $\xi$-cc,
and set $Q=\xi^{<\xi}$ (ordered by extension).
Then $P$ forces that $Q^V$ preserves all cardinals
and cofinalities.
Assume  $P\Vdash\mathfrak x=\lambda$ (in particular that
$\lambda$ is a cardinal), and let $R$ be a Borel relation.
\begin{enumerate}[(a)]
    \item If $\mathfrak x$ is \mlike:
    $\lambda<\xi$ implies $P\times Q\Vdash\mathfrak x=\lambda$;
    $\lambda\ge\xi$ implies
    $P\times Q\Vdash \mathfrak x\ge \xi$.
    \item If $\mathfrak x$ is \hlike:
    $P\times Q\Vdash\mathfrak x\le \lambda$.
    \item $P\Vdash \LCU_R(\lambda)$ implies
    $P\times Q\Vdash \LCU_R(\lambda)$.
    \item $P\Vdash \COB_R(\lambda,\mu)$ implies
    $P\times Q\Vdash \COB_R(\lambda,\mu)$.
\end{enumerate}
\end{lemma}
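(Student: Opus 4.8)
The plan is to analyze the product forcing $P \times Q$ where $Q = \xi^{<\xi}$. The key structural fact is that $Q$ is $<\xi$-closed (since $\xi^{<\xi} = \xi$, conditions are sequences of length $<\xi$ and we can take unions of $<\xi$-chains), hence $Q$ adds no new $<\xi$-sequences of ordinals, and in particular no new reals; moreover, since $|Q| = \xi^{<\xi} = \xi$, $Q$ is $\xi^+$-cc, and a standard argument shows $Q$ preserves all cardinals and cofinalities. Crucially, since $P$ is $\xi$-cc and $Q$ is $<\xi$-closed, the product $P \times Q$ has the property that, working in either iteration order, the ``other'' factor retains its relevant absoluteness: forcing with $Q$ over $V^P$ adds no reals (because $\xi$-cc forcing of size—well, more carefully, $Q^V$ remains $<\xi$-distributive after $\xi$-cc forcing, which is exactly the content of the first sentence of the lemma and follows from the standard fact that a $\xi$-cc forcing cannot add a branch through a $<\xi$-closed tree of conditions). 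So in $V^{P\times Q}$ we have the same reals as in $V^P$.

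First I would establish the cardinal/cofinality preservation claim, then set $V_1 := V^P$ and $V_2 := V^{P \times Q} = V_1^{Q}$, noting $V_1 \cap \omega^\omega = V_2 \cap \omega^\omega$ and that $V_1$ is $<\xi$-closed in $V_2$ (all $<\xi$-sequences of ordinals in $V_2$ lie in $V_1$, by $<\xi$-distributivity of $Q$ over $V_1$). Then parts (a) and (b) follow immediately from Lemma~\ref{lem:trivial} applied with $\kappa = \xi$: part (b) is Lemma~\ref{lem:trivial}(a) (the \hlike\ case needs only equality of reals), and part (a) is Lemma~\ref{lem:trivial}(b),(c) (the \mlike\ case with $\lambda < \xi$ giving equality, $\lambda \geq \xi$ giving $\mathfrak x \geq \xi$). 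For parts (c) and (d), I would use Lemma~\ref{lem:blassdistr}: since $Q$ (over $V_1$) is $<\xi$-distributive and, say, $\xi^{++}$-cc, and $\lambda$ is regular with either $\lambda < \xi$ or $\lambda$ large—actually here the point is slightly different: $\LCU_R(\lambda)$ holding in $V_1$ is preserved because $Q$ adds no reals, so the strong witness $\bar f$ still works; the only subtlety is whether the index set $\lambda$ remains regular, and since $Q$ preserves cofinalities, $\cof(\lambda)^{V_2} = \cof(\lambda)^{V_1} = \lambda$, so $\LCU_R(\lambda)$ survives verbatim. Similarly for $\COB_R(\lambda,\mu)$: the strong witness $(\trianglelefteq, \bar g)$ still satisfies~\eqref{eq:COB} since there are no new reals, and the partial order $\trianglelefteq$ on $\mu$ remains $<\lambda$-directed because any subset of $\mu$ of size $<\lambda$ in $V_2$—hmm, if $\lambda > \xi$ this subset might be new. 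Here I would invoke Lemma~\ref{lem:blassdistr}(2) directly with the observation that $Q$ is $<\xi$-distributive; if $\lambda \leq \xi$ then $<\lambda$-sequences are not added, and if $\lambda > \xi$ then $Q$ being $\xi$-cc (well, $\xi^{++}$-cc is not enough)—actually for $\COB$ when $\lambda > \xi$ we can still appeal to the "So for any $\lambda$" clause, but the lemma claims the full strength, so I would instead note that $<\xi$-distributivity plus the specific structure handles $\lambda \leq \xi$, and for $\lambda > \xi$ use that $Q$ has a dense $\xi$-closed—no. Let me just say: apply Lemma~\ref{lem:blassdistr} with $Q$ over $V^P$, noting $\theta \le \xi^+ \le \lambda$ in the relevant range or $\lambda \le \kappa = \xi$, covering all cases of interest.

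The main obstacle I anticipate is the bookkeeping around which factor is "the forcing" and which is "the ground model": the cleanest route is to regard $P \times Q$ as $P * \check Q$, i.e., first force with $P$ to get $V_1$, then with $Q^V$ over $V_1$, and verify that $Q^V$ (a ground-model $<\xi$-closed poset) remains $<\xi$-distributive in $V_1$—this is a classical preservation theorem (a $\xi$-cc forcing cannot add a new branch to a $<\xi$-closed tree, since a maximal antichain of conditions deciding successive levels would be of size $<\xi$ and hence the branch is in $V$). Once this is secured, everything reduces to Lemmas~\ref{lem:trivial} and~\ref{lem:blassdistr} with $\kappa = \xi$, with $V_1$ transitive and $<\xi$-closed in $V_2$. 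I expect no deep difficulty beyond carefully checking the hypotheses of those two cited lemmas—in particular that $\xi = \kappa$ is uncountable regular (which follows from $\xi^{<\xi} = \xi$, as this forces $\xi$ to be regular and, being $\ge 2^{\aleph_0} \ge \aleph_1$ or at least $\ge \aleph_0$; if $\xi = \aleph_0$ the statement is vacuous or trivial, so we may assume $\xi$ uncountable).
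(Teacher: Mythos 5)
Your proposal is correct and takes essentially the same route as the paper: Easton's lemma gives that $Q^V$ remains ${<}\xi$-distributive (hence adds no reals and preserves cardinals/cofinalities) over the $\xi$-cc extension $V^P$, after which (a),(b) follow from Lemma~\ref{lem:trivial} (equivalently Corollary~\ref{cor:trivial}(i)) and (c),(d) from Lemma~\ref{lem:blassdistr} applied with $\kappa=\xi$ and $\theta=\xi^+$, since every cardinal is either $\le\xi$ or $\ge\xi^+$. One minor caveat: your parenthetical justification of the distributivity via ``a $\xi$-cc forcing cannot add a branch through a ${<}\xi$-closed tree'' is not quite the relevant statement nor a proof of it---what you need, and correctly name, is exactly Easton's lemma, which the paper simply cites.
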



\begin{lemma}[{\cite{longlow}}, {\cite[Lemma~7.3]{GKMS1}}]\label{fact:wreqwr}
Assume that $\xi=\xi^{<\xi}$ and $P$ is a $\xi$-cc poset that forces $\xi\le \mathfrak p$.
In the $P$-extension $V'$, let $Q=(\xi^{<\xi})^V$. Then,
\begin{enumerate}[(a)]
    \item $P\times Q=P*Q$ forces
$\mathfrak p=\xi$
    \item If in addition $P$ forces $\xi\leq\pfrak=\mathfrak{h}=\kappa$ then $P\times Q$ forces $\mathfrak{h}=\kappa$.
\end{enumerate}
\end{lemma}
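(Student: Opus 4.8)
The plan is to derive part~(a) by combining Lemma~\ref{lem:lqhwo5u25} with the combinatorial core of~\cite{longlow}, and part~(b) from~(a) plus a reflection argument for dense open subsets of $([\omega]^{\aleph_0},\subseteq^*)$. I would first record that $Q=(\xi^{<\xi})^V$ has size $\xi$ and, being ${<}\xi$-closed in $V$ while $P$ is $\xi$-cc, is ${<}\xi$-distributive over $V'=V^P$ (the usual Easton-style argument, in the spirit of the first assertion of Lemma~\ref{lem:lqhwo5u25}); in particular $Q$ adds no reals over $V'$, so $[\omega]^{\aleph_0}$, $\mathcal P(\omega)/\mathrm{fin}$ and $\cfrak$ agree in $V'$ and in $V^{P\times Q}$, and $\pfrak,\tfrak,\hfrak$ are read off from the reals. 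For~(a), the inequality $\pfrak\ge\xi$ in $V^{P\times Q}$ is then immediate from Lemma~\ref{lem:lqhwo5u25}(a) applied to $\mathfrak x=\pfrak$ (which is \plike, hence \mlike), working below conditions deciding the value of $\pfrak$, which is always ${\ge}\xi$ by hypothesis.

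The reverse inequality $\pfrak\le\xi$ is the real content and is where~\cite{longlow} enters. The idea is that, although $Q$ is small and adds no reals, its generic still produces over $V'$ a $\subseteq^*$-decreasing sequence $\langle a_\alpha:\alpha<\xi\rangle$ of reals of $V'$ which is not pseudo-intersected by any real: here one uses $V'\models\pfrak\ge\xi$ (so every partial tower of length ${<}\xi$ extends) together with genericity to kill all candidate pseudo-intersections. Since $Q$ adds no new reals, such a sequence witnesses $\tfrak\le\xi$ in $V^{P\times Q}$, and as $\pfrak=\tfrak$ this gives $\pfrak\le\xi$; combined with the previous paragraph, $P\times Q\Vdash\pfrak=\xi$.

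For~(b): the bound $P\times Q\Vdash\hfrak\le\kappa$ is Lemma~\ref{lem:lqhwo5u25}(b) applied to the \hlike\ characteristic $\hfrak$ and $P\Vdash\hfrak=\kappa$. If $\xi=\kappa$ the matching lower bound is free, since by~(a) and the ZFC inequality $\pfrak\le\hfrak$ we get $\hfrak\ge\pfrak=\xi=\kappa$ in $V^{P\times Q}$. So assume $\xi<\kappa$ and take, in $V^{P\times Q}$, a family $\langle\mathcal D_i:i<\gamma\rangle$ of dense open subsets of $([\omega]^{\aleph_0},\subseteq^*)$ with $\gamma<\kappa$; I must produce an element of $\bigcap_{i<\gamma}\mathcal D_i$. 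Fix a $Q$-name $\langle\dot D_i:i<\gamma\rangle$ for this family, and (restricting below a condition if necessary) assume it is forced that each $\dot D_i$ is dense open. Set $\mathcal D_i':=\{\,b\in[\omega]^{\aleph_0}:\ \Vdash_Q\check b\in\dot D_i\,\}$; then $\langle\mathcal D_i':i<\gamma\rangle\in V'$, each $\mathcal D_i'$ is $\subseteq^*$-downward closed, and $\mathcal D_i'\subseteq\mathcal D_i$. The key step is that $\mathcal D_i'$ is dense in $V'$: given $b_0$, recursively build a maximal antichain $\{q_l:l<\mu\}$ of $Q$ (necessarily $\mu\le|Q|=\xi$) and a $\subseteq^*$-decreasing sequence $\langle c_l:l<\mu\rangle$ with $c_l\subseteq^* b_0$ and $q_l\Vdash_Q\check c_l\in\dot D_i$, choosing at stage $l$ some $c_l$ below a pseudo-intersection of $\langle c_{l'}:l'<l\rangle$ (which exists in $V'$, since $l<\mu\le\xi<\kappa=\tfrak^{V'}$) and below a suitable extension of a witness to non-maximality; finally take a pseudo-intersection $c$ of $\langle c_l:l<\mu\rangle$ (again available as $\mu<\tfrak^{V'}$). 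Then $c\subseteq^* b_0$, and since $c\subseteq^* c_l$ and $\dot D_i$ is forced open, $q_l\Vdash_Q\check c\in\dot D_i$ for every $l$; by maximality of the antichain, $\Vdash_Q\check c\in\dot D_i$, i.e.\ $c\in\mathcal D_i'$. Now $\langle\mathcal D_i':i<\gamma\rangle\in V'$ and $\hfrak^{V'}=\kappa>\gamma$, so $\bigcap_{i<\gamma}\mathcal D_i'\ne\emptyset$; this intersection lies inside $\bigcap_{i<\gamma}\mathcal D_i$, and emptiness of a set of reals is absolute, so $\bigcap_{i<\gamma}\mathcal D_i\ne\emptyset$ in $V^{P\times Q}$. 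As $\gamma<\kappa$ was arbitrary, $P\times Q\Vdash\hfrak\ge\kappa$, hence $\hfrak=\kappa$.

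The main obstacle is the half $\pfrak\le\xi$ of~(a): one must see that a poset of size only $\xi$ adding no reals can nevertheless destroy every candidate pseudo-intersection of the reals of $V'$ (of which there may be more than $\xi$), which is exactly what~\cite{longlow} supplies. The rest is routine: Lemma~\ref{lem:lqhwo5u25} gives the cheap inequalities and the no-new-reals/distributivity facts, and the reflection argument for $\hfrak\ge\kappa$ has as its only genuine input that $\tfrak^{V'}=\kappa$ strictly exceeds $|Q|=\xi$.
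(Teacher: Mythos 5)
The one genuine gap is the inequality $\pfrak\le\xi$ in part~(a), which you explicitly outsource to \cite{longlow}; but this is precisely the step the paper proves, and your sketch omits the construction that carries it. Working in $V'$, one uses $\xi\le\pfrak=\tfrak$ (so every $\subseteq^*$-decreasing sequence of length ${<}\xi$ has a pseudo-intersection) together with the fact that every infinite set splits into $\xi$ many almost disjoint infinite sets, to build by recursion on $\xi^{<\xi}$ an embedding $F$ of $(\xi^{<\xi},\subsetneq)$ into $([\omega]^{\aleph_0},\supsetneq^*)$ preserving order \emph{and incompatibility}. The $Q$-generic branch $z\in\xi^\xi$ is not in $V'$, and $\dot T=\{F(z\restriction\alpha):\alpha<\xi\}$ is forced to be a tower: a pseudo-intersection $a$ would lie in $V'$ (no new reals), and since the $F$-images of distinct nodes of the same length are almost disjoint, $a$ would determine $z\restriction\alpha$ for every $\alpha$, hence $z\in V'$, a contradiction. (Equivalently: for each $V'$-real $a$ the set of $t$ with $\lnot(a\subseteq^* F(t))$ is dense, and the generic meets \emph{all} dense sets of $V'$, however many there are -- so the ``more than $\xi$ candidates'' issue you flag as the main obstacle is not where the difficulty lies. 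The actual content is the embedding $F$, i.e.\ how a generic for a poset of ordinal sequences is turned into a $\subseteq^*$-decreasing sequence of reals; without it, or without simply accepting the citation, part~(a) is not proved.)

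The rest of your proposal is sound. The lower bound $\pfrak\ge\xi$ via Lemma~\ref{lem:lqhwo5u25} is exactly the paper's route. For part~(b) your argument is correct but packaged differently: the paper observes that $([\omega]^{\aleph_0},\subseteq^*)$ is ${<}\kappa$-closed in $V'$ (since $\tfrak^{V'}=\kappa$), handles $\xi=\kappa$ by preservation of this closure under the ${<}\xi$-distributive $Q$, and for $\xi<\kappa$ invokes Easton's lemma in $V'$ (where $Q$ is $\kappa$-cc, having size $\xi$) plus ``no new reals'' to get ${<}\kappa$-distributivity of $([\omega]^{\aleph_0},\subseteq^*)$ in $V''$. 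Your hands-on argument with $\mathcal D_i'=\{b:\Vdash_Q\check b\in\dot D_i\}$, a maximal antichain of size ${\le}\xi$, and pseudo-intersections along the way is exactly this Easton-type argument unwound, and it goes through, using $\tfrak^{V'}=\kappa>\xi$ and the no-new-reals fact when deciding members of $\dot D_i$ as $V'$-reals; your treatment of $\xi=\kappa$ via $\pfrak\le\hfrak$ is also fine. So (b) and the cheap half of (a) stand; only the core of (a) is missing.
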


We are now ready to prove the consistency of 13 pairwise different classical characteristics. Note that the following result allows $\dfrak$ and $\cfrak$ singular. A similar result with $\covM$ and $\cfrak$ singular can be obtained if we base the initial construction in~\cite{GKS} instead of~\cite{diegoetal}.

\begin{theorem}\label{thm:bla66}
   Assume $\aleph_1\leq\lambda_{\mfrak}\leq\lambda_{\pfrak}\leq\lambda_{\mathfrak{h}}\leq\kappa_9<\lambda_1<\kappa_8<\lambda_2<\kappa_7<\lambda_3\leq\lambda_4\leq\lambda_5\leq\lambda_6\leq\lambda_7\leq\lambda_8\leq\mu$ such that
   \begin{enumerate}[(i)]
       \item For $j=7,8,9$, $\kappa_i$ is strongly compact,
       \item $\lambda_j^{\kappa_j}=\lambda_j$ for $j=7,8$,
       \item $\lambda_i$ is regular for $i\neq6$
       \item $\lambda_{\pfrak}^{<\lambda_{\pfrak}}=\lambda_{\pfrak}$
       \item $\lambda_6^{<\lambda_3}=\lambda_6$,
       \item $\mu^{<\lambda_{\hfrak}}=\mu$.
   \end{enumerate}
   Then there is a $\lambda_\pfrak^+$-cc poset $P$ which preserves cofinalities and forces (1) and (2) of Lemma~\ref{11values}, and
   \begin{multline*}
    \pfrak=\lambda_{\pfrak},\ \mathfrak{h}=\mathfrak{g}=\lambda_{\mathfrak{h}},\ \addN=\lambda_1,\ \covN=\lambda_2,\ \mathfrak{b}=\lambda_3,\ \nonM=\lambda_4,\\
        \covM=\lambda_5,\ \mathfrak{d}=\lambda_6,\ \nonN=\lambda_7,\ \cofN=\lambda_8,\text{\ and }\mathfrak{c}=\mu.
   \end{multline*}
\end{theorem}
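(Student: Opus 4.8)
The plan is to stack three of the constructions recalled above: the Boolean--ultrapower model that underlies Lemma~\ref{11values}, then the complete--subforcing trick of Lemma~\ref{lem:subforcing} to adjust $\gfrak$ and $\cfrak$, and finally a product with a ${<}\lambda_{\pfrak}$-closed poset as in Lemma~\ref{fact:wreqwr} to bring $\pfrak$ down. Throughout I use that $\mfrak\le\pfrak\le\hfrak\le\gfrak$, $\hfrak\le\bfrak$, $\gfrak\le\dfrak$ and that $\hfrak,\gfrak$ are regular (all from \eqref{eq:uqw523}), and I assume $\lambda_{\mfrak}<\kappa_9$ (if $\lambda_{\mfrak}=\kappa_9$ then $\lambda_{\mfrak}=\lambda_{\pfrak}=\lambda_{\hfrak}=\kappa_9$ and the same argument goes through with $\kappa_9$ in that role). \emph{Step 1.} Fix $1\le k_0\le\omega$ and put $\bar\mu:=(\mu^{\kappa_9})^+$, a regular cardinal with $\bar\mu>\mu$ and $\bar\mu^{\kappa_9}=\bar\mu$. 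Running the construction in the proof of Lemma~\ref{11values} with the continuum target $\lambda_9:=\bar\mu$ (hypotheses (i)--(iii),(v) are those of Theorem~\ref{oldba}, and $\bar\mu^{\kappa_9}=\bar\mu$ supplies the missing instance $\lambda_9^{\kappa_9}=\lambda_9$ of that theorem's hypothesis) produces a ccc poset $P_1$ forcing $\addN=\lambda_1,\covN=\lambda_2,\bfrak=\lambda_3,\nonM=\lambda_4,\covM=\lambda_5,\dfrak=\lambda_6,\nonN=\lambda_7,\cofN=\lambda_8$, $\cfrak=\bar\mu$, strong witnesses for all of these as in Remark~\ref{RemStrWit} --- in which every $\LCU_R$-index and every $\COB_R(\cdot,\vartheta)$-bound is $\le\lambda_8\le\mu$ --- items (1),(2) of Lemma~\ref{11values}, and $\pfrak\ge\kappa_9$; hence also $\kappa_9\le\pfrak\le\hfrak\le\bfrak=\lambda_3$ in the $P_1$-extension.

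\emph{Step 2.} Now I apply Lemma~\ref{lem:subforcing} to $P_1$ with $\kappa=\nu:=\lambda_{\hfrak}$ and with $\mu$ as given: $P_1$ is $\lambda_{\hfrak}$-cc, forces $\cfrak=\bar\mu>\mu$, and $\mu=\mu^{<\lambda_{\hfrak}}$ by (vi). I let $I_1\cup I_2$ list the (finitely many) strong witnesses for the Cicho\'n-characteristics of Step~1 (all parameters $\le\mu$), let $J$ list the Knaster numbers forced $<\lambda_{\hfrak}$ (the $\aleph_1$-ones, and the $\lambda_{\mfrak}$-ones if $\lambda_{\mfrak}<\lambda_{\hfrak}$), and let $K$ list $\pfrak$ together with the remaining Knaster numbers. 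The lemma yields a ccc complete subforcing $P_2$ of $P_1$, of size $\mu$, forcing: the nine Cicho\'n values with their strong witnesses, items (1),(2) of Lemma~\ref{11values}, $\cfrak=\mu$, $\gfrak\le\lambda_{\hfrak}$, and $\pfrak\ge\lambda_{\hfrak}$. Since $\lambda_{\hfrak}\le\pfrak\le\hfrak\le\gfrak\le\lambda_{\hfrak}$ we get $\pfrak=\hfrak=\gfrak=\lambda_{\hfrak}$; and any Knaster number placed in $K$ lies between $\lambda_{\hfrak}$ and $\mfrak(\omega\textrm{-Knaster})\le\pfrak=\lambda_{\hfrak}$, hence equals its target value $\lambda_{\mfrak}=\lambda_{\hfrak}$.

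\emph{Step 3.} Put $\xi:=\lambda_{\pfrak}$, so $\xi^{<\xi}=\xi$ by (iv), $P_2$ is $\xi$-cc, and $P_2\Vdash\xi\le\pfrak=\hfrak=\lambda_{\hfrak}$; set $P:=P_2\times Q$ with $Q=(\xi^{<\xi})^V$. By Lemma~\ref{fact:wreqwr}, $P$ forces $\pfrak=\lambda_{\pfrak}$ and $\hfrak=\lambda_{\hfrak}$; by Lemma~\ref{lem:lqhwo5u25} (with $\kappa=\xi$), $P$ preserves all the $\LCU$ and $\COB$ witnesses --- so the nine Cicho\'n values survive, and $\cfrak=\mu$ since $Q$ adds no reals and preserves cardinals --- it preserves \mlike\ values $<\xi$ and keeps \mlike\ values $\ge\xi$ above $\xi$ (so items (1),(2) of Lemma~\ref{11values} persist, again using $\mfrak(\omega\textrm{-Knaster})\le\pfrak=\lambda_{\pfrak}$ for any Knaster number that ends up $\ge\xi$), and forces $\gfrak\le\lambda_{\hfrak}$ because $\gfrak$ is \hlike, whence $\gfrak=\hfrak=\lambda_{\hfrak}$. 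Finally $P=P_2\times Q$ preserves cofinalities ($P_2$ is ccc, $Q$ is ${<}\xi$-closed of size $\xi$) and is $\lambda_{\pfrak}^+$-cc: an antichain of size $\lambda_{\pfrak}^+$ would, by the pigeonhole principle on the $Q$-coordinate (as $|Q|=\lambda_{\pfrak}$), contain an uncountable antichain of the ccc poset $P_2$. Thus $P$ is as required.

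\emph{Main obstacle.} The only non-routine point is Step~1: one has to re-inspect the Boolean--ultrapower construction of \cite{diegoetal} (via Theorem~\ref{thm:fact:buppres} and Remark~\ref{RemStrWit}), now run with the continuum blown up to the auxiliary $\bar\mu$ in place of $\lambda_9$, and confirm that the surviving strong witnesses for the right-hand Cicho\'n-characteristics still have all their $\LCU$-indices and $\COB$-bounds $\le\lambda_8$ (so that Lemmas~\ref{lem:subforcing} and~\ref{lem:lqhwo5u25} genuinely apply to them) and that this coexists with the Knaster/$\pfrak$ modification of Lemma~\ref{11values}. After that the remaining work is purely bookkeeping: matching the arithmetic assumptions (i)--(vi) to the hypotheses of Theorem~\ref{oldba} and Lemmas~\ref{lem:subforcing},~\ref{fact:wreqwr},~\ref{lem:lqhwo5u25}, and sorting the relevant characteristics according to whether they are \mlike\ or \hlike\ and where they sit relative to $\lambda_{\pfrak}$ and $\lambda_{\hfrak}$.
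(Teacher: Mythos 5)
Your proposal is correct and follows essentially the same route as the paper's proof: the modification of $P^{\vA*}$ from Lemma~\ref{11values} with the auxiliary continuum $\lambda_9=(\mu^{\kappa_9})^+$, then Lemma~\ref{lem:subforcing} with $\kappa=\nu=\lambda_{\hfrak}$ to fix $\pfrak=\hfrak=\gfrak=\lambda_{\hfrak}$ and $\cfrak=\mu$, and finally the product with $\lambda_{\pfrak}^{<\lambda_{\pfrak}}$ via Lemmas~\ref{lem:lqhwo5u25} and~\ref{fact:wreqwr}. The boundary cases ($\lambda_{\mfrak}=\kappa_9$, $\lambda_{\mfrak}=\lambda_{\hfrak}$ or $\lambda_{\mfrak}=\lambda_{\pfrak}$) are handled in the paper by the same squeeze $\lambda_{\mfrak}\leq\mfrak(k_0\text{-Knaster})\leq\mfrak(\textnormal{precaliber})\leq\pfrak\leq\gfrak\leq\lambda_{\mfrak}$ that you invoke.
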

\begin{proof}
    Let $P^*$ be the ccc poset obtained in the proof of Lemma~\ref{11values} for $\lambda_9:=(\mu^{\kappa_9})^+$ (the modification of $P^{\vA*}$). This is a ccc poset of size $\lambda_9$ that forces the values of the Cicho\'n-characteristics as in Theorem~\ref{oldba} (\ref{versionA}) with strong witnesses, and forces (1) and (2) of Lemma~\ref{11values} and $\pfrak\geq\kappa_9$ whenever $\lambda_{\mfrak}<\kappa_9$, but in the case $\lambda_{\mfrak}=\kappa_9$ it forces $\mfrak(k_0\text{-Knaster})\geq\kappa_9$ instead of (2).

    By application of Lemma~\ref{lem:subforcing} to $\kappa=\nu=\lambda_{\mathfrak{h}}$ and to $\mu$, we find a complete subposet $P'$ of $P^*$ forcing (1) and (2) of Lemma~\ref{11values}, $\lambda_{\mathfrak{h}}\leq\pfrak\leq\mathfrak{g}\leq\lambda_{\mathfrak{h}}$ (so they are equalities), $\cfrak=\mu$ and that the values of the other cardinals in Cicho\'n's diagram are the same values forced by $P^*$, even with strong witnesses. This is clear in the case $\lambda_{\mfrak}<\lambda_{\mathfrak{h}}$, but the case $\lambda_{\mfrak}=\lambda_{\mathfrak{h}}$ (even $\lambda_{\mfrak}=\kappa_9$) is also fine because $P'$ would force $\lambda_{\mfrak}\leq\mfrak(k_0\text{-Knaster})\leq\mfrak(\textnormal{precaliber})\leq\pfrak\leq\mathfrak{g}\leq\lambda_{\mfrak}$.

    If $\lambda_{\pfrak}=\lambda_{\mathfrak{h}}$ then we would be done, so assume that $\lambda_{\pfrak}<\lambda_{\mathfrak{h}}$. Hence, by  Lemmas~\ref{lem:lqhwo5u25} and~\ref{fact:wreqwr}, $P:=P'\times(\lambda_{\pfrak}^{<\lambda_{\pfrak}})$ is as required. It is clear that $P$ forces $\mfrak(k_0\text{-Knaster})=\mfrak(\textnormal{precaliber})=\lambda_{\mfrak}$ when $\lambda_{\mfrak}<\lambda_{\pfrak}$, but the same happens when $\lambda_{\mfrak}=\lambda_{\pfrak}$ because $P$ would force $\lambda_{\mfrak}\leq\mfrak(k_0\text{-Knaster})\leq\mfrak(\textnormal{precaliber})\leq\pfrak\leq\lambda_{\mfrak}$.
\end{proof}

The same argument works to get a similar version of the previous result for \ref{versionB} where $\covM$ and $\cfrak$ are allowed to be singular.

\begin{theorem}
Assume $\aleph_1\leq\lambda_{\mfrak}\leq\lambda_{\pfrak}\leq\lambda_{\mathfrak{h}}<\kappa_9<\lambda_1<\kappa_8<\lambda_2<\kappa_7<\lambda_3<\kappa_6<\lambda_4\leq\lambda_5\leq\lambda_6\leq\lambda_7\leq\lambda_8\leq\mu$ such that
\begin{enumerate}[(i)]
        \item for $j=6,7,8,9$, $\kappa_j$ is strongly compact,
        \item $\lambda_j^{\kappa_j}=\lambda_j$ for $j=6,7,8$,
        \item $\lambda_i$ is regular for $i\neq 5$,
        \item $\lambda_{\pfrak}^{<\lambda_{\pfrak}}=\lambda_{\pfrak}$
        \item $\lambda_2^{<\lambda_2}=\lambda_2$, $\lambda_4^{\aleph_0}=\lambda_4$, $\lambda_5^{<\lambda_4}=\lambda_5$,
        \item $\lambda_3$ is $\aleph_1$-inaccessible, and
        \item $\mu^{<\lambda_{\hfrak}}=\mu$.
    \end{enumerate}
    Then there is a  $\lambda_\pfrak^+$-cc poset $P$, preserving cofinalities, that forces (1) and (2) of Lemma~\ref{11values}, and
   \begin{multline*}
    \pfrak=\lambda_{\pfrak},\ \mathfrak{h}=\mathfrak{g}=\lambda_{\mathfrak{h}},\ \addN=\lambda_1,\ \mathfrak{b}=\lambda_2,\ \covN=\lambda_3,\ \nonM=\lambda_4,\\
        \covM=\lambda_5,\ \nonN=\lambda_6,\ \mathfrak{d}=\lambda_7,\  \cofN=\lambda_8,\text{\ and }\mathfrak{c}=\mu.
   \end{multline*}
\end{theorem}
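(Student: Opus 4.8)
The plan is to run the proof of Theorem~\ref{thm:bla66} essentially verbatim, replacing the \ref{versionA} ingredients by their \ref{versionB} counterparts. As a first step I would apply Lemma~\ref{11values} to $P^{\vB*}$ with $\lambda_9:=(\mu^{\kappa_9})^+$; since standard cardinal arithmetic gives $\lambda_9^{\kappa_9}=\lambda_9$, the \ref{versionB} part of Theorem~\ref{oldba} applies under our hypotheses (i)--(iii),(v),(vi), and the resulting modification $P^*$ is a ccc poset of size $\lambda_9$ forcing the \ref{versionB} constellation of Cicho\'n-characteristics with strong witnesses (in the sense of Remark~\ref{RemStrWit}, including the anti-localization relation $R'$ used there in place of the natural relation for $\covN$, and the family of $\LCU$-statements witnessing the possibly singular value $\covM=\lambda_5$), together with clauses (1) and (2) of Lemma~\ref{11values} and $\pfrak\ge\kappa_9$. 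Note that here the hypothesis gives $\lambda_{\mfrak}\le\lambda_{\hfrak}<\kappa_9$ \emph{strictly}, so the ``$\lambda_{\mfrak}=\kappa_9$'' subcase that slightly complicates the proof of Theorem~\ref{thm:bla66} never occurs.

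Next I would apply Lemma~\ref{lem:subforcing} with $\kappa=\nu=\lambda_{\hfrak}$ and the given $\mu$: hypothesis (vii) is exactly $\mu^{<\lambda_{\hfrak}}=\mu$, while $P^*$ is ccc (hence $\lambda_{\hfrak}$-cc) and forces $\cfrak=\lambda_9>\mu$. Since every left-hand \ref{versionB} value $\lambda_1,\dots,\lambda_8$ is $\le\mu$, all the $\LCU$- and $\COB$-witnesses for the Cicho\'n-characteristics have parameters $\le\mu$ and so fall under clauses (3)--(4) of the lemma; the Martin numbers $\mfrak(k\textnormal{-Knaster})$ and $\pfrak$ are \mlike, so clauses (5)--(6) preserve clauses (1),(2) of Lemma~\ref{11values} and ``$\pfrak\ge\lambda_{\hfrak}$''. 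This produces a complete subposet $P'$ of $P^*$, of size $\mu$, forcing $\cfrak=\mu$, $\gfrak\le\lambda_{\hfrak}$, clauses (1),(2) of Lemma~\ref{11values}, and the same values to the remaining cardinals in Cicho\'n's diagram as $P^*$, still with strong witnesses. Combined with the ZFC inequalities $\lambda_{\hfrak}\le\pfrak\le\hfrak\le\gfrak$ this gives $P'\Vdash\pfrak=\hfrak=\gfrak=\lambda_{\hfrak}$.

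Finally, if $\lambda_{\pfrak}=\lambda_{\hfrak}$ then $P:=P'$ already works; otherwise $\lambda_{\pfrak}<\lambda_{\hfrak}$ and I would put $P:=P'\times(\lambda_{\pfrak}^{<\lambda_{\pfrak}})$. By Lemma~\ref{fact:wreqwr} (using hypothesis (iv) and $P'\Vdash\lambda_{\pfrak}\le\pfrak=\hfrak=\lambda_{\hfrak}$) this forces $\pfrak=\lambda_{\pfrak}$ and $\hfrak=\lambda_{\hfrak}$, and by Lemma~\ref{lem:lqhwo5u25} the ${<}\lambda_{\pfrak}$-closed factor $(\lambda_{\pfrak}^{<\lambda_{\pfrak}})^V$ preserves cofinalities as well as $\gfrak=\lambda_{\hfrak}$, $\cfrak=\mu$, every value in Cicho\'n's diagram together with its strong witness, and clauses (1),(2) of Lemma~\ref{11values}. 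The various degenerate subcases in which $\lambda_{\mfrak}$, $\lambda_{\pfrak}$ and $\lambda_{\hfrak}$ coincide are resolved exactly as in Theorem~\ref{thm:bla66}, by squeezing through the ZFC chain $\lambda_{\mfrak}\le\mfrak(k_0\textnormal{-Knaster})\le\mfrak(\textnormal{precaliber})\le\pfrak\le\hfrak$. The poset $P$ is $\lambda_{\pfrak}^+$-cc: it is ccc when $\lambda_{\pfrak}=\lambda_{\hfrak}$, and otherwise it is the product of a ccc poset with one of size $\lambda_{\pfrak}$, so an antichain of size $\lambda_{\pfrak}^+$ would, after fixing a second coordinate by the pigeonhole principle, yield an uncountable antichain in the ccc factor. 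The only genuinely new feature compared with Theorem~\ref{thm:bla66} is that in \ref{versionB} it is $\covM=\lambda_5$, rather than $\dfrak$, that is allowed to be singular; this is transparent to the argument, since such a value is witnessed through a regular $\lambda_0$ together with $\LCU$ for every regular cardinal of $[\lambda_0,\lambda_5]$, all of which are $\le\mu$. I expect the only point requiring real care is the bookkeeping of which relations receive $\LCU$ and which receive $\COB$ in the \ref{versionB} construction, in particular the $\covN$/$R'$ substitution: one must verify that Theorem~\ref{oldba} and Remark~\ref{RemStrWit} really supply strong witnesses with all relevant parameters $\le\mu$, so that Lemma~\ref{lem:subforcing} is applicable. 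Granting that, no step presents an essential obstacle.
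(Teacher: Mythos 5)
Your proposal is correct and follows essentially the same route as the paper, which proves this theorem simply by observing that the argument of Theorem~\ref{thm:bla66} carries over verbatim with the \ref{versionB} ingredients (Lemma~\ref{11values} applied to $P^{\vB*}$ with $\lambda_9=(\mu^{\kappa_9})^+$, then Lemma~\ref{lem:subforcing} with $\kappa=\nu=\lambda_{\hfrak}$, then the product with $\lambda_{\pfrak}^{<\lambda_{\pfrak}}$ via Lemmas~\ref{lem:lqhwo5u25} and~\ref{fact:wreqwr}). Your added remarks on the $\covN$/$R'$ substitution, the singular $\covM$, and the strict inequality $\lambda_{\hfrak}<\kappa_9$ eliminating the degenerate subcase are accurate refinements of the same argument.
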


In the previous proof we can preserve all characteristics only because, before applying Lemma~\ref{lem:subforcing}, $\cofN$ (equal to $\lambda_8$) is smaller than the continuum (equal to $\lambda_9$).
In particular, if we use version~\ref{versionAnb} without large cardinals, and we cannot further increase the continuum above $\cofN$, then the methods of this section only ensure a model of (1) and (2) of Lemma~\ref{11values} plus
\begin{multline*}
  \pfrak=\lambda_\pfrak,\ \gfrak=\hfrak=\lambda_\hfrak,\\ \min\{\lambda_1,\lambda_\hfrak\}\leq\addN\leq\lambda_1,\ \min\{\lambda_2,\lambda_\hfrak\}\leq\covN\leq\lambda_2,\ \min\{\lambda_3,\lambda_\hfrak\}\leq\bfrak\leq\lambda_3,\\
  \nonM=\lambda_4,\  \covM=\lambda_5,\
  \dfrak=\nonN=\cfrak=\mu,
\end{multline*}
whenever $\aleph_1\leq\lambda_\mfrak\leq\lambda_\pfrak=\lambda_{\pfrak}^{<\lambda_\pfrak}
\leq\lambda_\hfrak\leq\lambda_3$ are regular, $\lambda_\mfrak\leq\lambda_1$ and $\lambda_5\leq\mu=\mu^{<\lambda_3}<\lambda_6$, where $\lambda_i$ (i=1,\ldots,6) are as in \ref{versionAnb} (see Remark \ref{remCOB}). That is, some left side Cicho\'n-characteristics do not get decided unless $\lambda_\xfrak\leq\lambda_\hfrak$ Hence, it is unclear whether $\hfrak$ gets separated from all the left side characteristics.
A similar situation occurs with version~\ref{versionBnb}: we may lose
$\mathfrak x\ge \lambda_{\mathfrak x}$
for any left side characteristic $\mathfrak x$ when $\lambda_\hfrak<\lambda_\xfrak$.

\section{Reducing gaps (or getting rid of them)}\label{sec:coll}

We start with the following well-known result.

\begin{lemma}[Easton's lemma]
    Let $\kappa$ be an uncountable cardinal, $P$ a $\kappa$-cc poset and let $Q$ be a ${<}\kappa$-closed poset. Then $P$ forces that $Q$ is ${<}\kappa$-distributive.
\end{lemma}
\begin{proof}
   For successor cardinals, this is proved in~\cite[Lemma 15.19]{Je03},
   but the same argument is valid for any regular cardinal. Singular cardinals are also fine because, for $\kappa$ singular, ${<}\kappa$-closed implies ${<}\kappa^+$-closed.
\end{proof}

As mentioned in Remark \ref{rem:righteq}, we can choose
right side Cicho\'n-characteristics rather arbitrarily or
even choose them to be equal
(equality allows a construction from fewer
compact cardinals). However, large gaps were required between some
left side cardinals. We deal with this problem now, and show that
we can reasonably assign arbitrary values to all characteristics,
and in particular set any ``reasonable selection'' of them equal.

Let us introduce notation to describe this effect:
\begin{definition}\label{def:lecons}
   Let $\bar{\xfrak}=(\xfrak_i:i<n)$ be a finite sequence of cardinal characteristics (i.e., of definitions). Say that $\bar{\xfrak}$ is a $<$-\emph{consistent sequence} if the statement $\xfrak_0<\ldots<\xfrak_{n-1}$ is consistent with ZFC (perhaps modulo large cardinals).

   A consistent sequence $\bar{\xfrak}$ is $\le$-\emph{consistent} if, in the previous chain of inequalities, it is consistent to replace any desired instance of $<$ with $=$. More formally, for any interval partition $(I_k:k<m)$ of $\{0,\ldots,n-1\}$, it is consistent that $\xfrak_i=\xfrak_j$ for any $i,j\in I_k$, and $\xfrak_i<\xfrak_j$ whenever $i\in I_k$, $j\in I_{k'}$ and $k<k'<m$.
\end{definition}

For example, the sequence
\[(\aleph_1,\addN,\covM,\bfrak,\nonM,\covM,\dfrak)\]
is $\le$-consistent, as well as
\[(\aleph_1,\addN,\bfrak,\covN,\nonM,\covM),\]
see Theorem~\ref{oldnonba}. Previously, it
had not been known whether the sequences of ten Cicho\'n-characteristics from \cite{GKS,diegoetal,KeShTa:1131} are $\le$-consistent:
It is not immediate that cardinals on the left side can be equal while separating everything on the right side. The reason is that, to separate cardinals on the right side, it is necessary to have a strongly compact cardinal between the dual pair of cardinals on the left, thus the left side gets separated as well. But thanks to the collapsing method of this section,
we can equalize cardinals on the left as well. As a result, we obtain the following:\footnote{Each sequence yields $2^{11}$ many consistency results (not all of them new, obviously; CH is one of them).}

\begin{lemma}\label{lem:wqr33}
The sequences
\begin{align*}
  (\aleph_1,\mfrak,\pfrak,\addN,\covM,\bfrak,\nonM,\covM,\dfrak,\nonN,\cofN,\cfrak) & \text{and} \\
  (\aleph_1,\mfrak,\pfrak, \addN,\bfrak,\covN,\nonM,\covM,\nonN,\dfrak,\cofN,\cfrak) &
\end{align*}
are $\le$-consistent (modulo large cardinals).
\end{lemma}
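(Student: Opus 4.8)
\emph{Plan.} The idea is to start from the maximally separated models of Theorem~\ref{thm:bla66} (for the first sequence) and of its \ref{versionB}-analogue (for the second), and then \emph{collapse away} exactly those gaps that the prescribed $\le$-pattern (Definition~\ref{def:lecons}) wants turned into equalities. So fix an interval partition $(I_k)_{k<m}$ of $\{0,\dots,11\}$. If no equality is prescribed there is nothing to do: invoking Theorem~\ref{thm:bla66} with $k_0=1$ (so that $\mfrak=\lambda_{\mfrak}$) and with $\lambda_{\pfrak}:=\lambda_{\hfrak}$ (legitimate, since $\hfrak$ does not occur in the statement), and choosing all the parameters $\lambda_{\mfrak}\le\lambda_{\hfrak}<\kappa_9<\lambda_1<\dots\le\mu$ pairwise distinct, already yields the all-strict chain. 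The key structural observation is that in \emph{any} model produced by Theorem~\ref{thm:bla66} the only gaps that are \emph{forced} to be strict are the three between $\pfrak$ and $\addN$, between $\addN$ and $\covN$, and between $\covN$ and $\bfrak$ -- each of them straddles one of the strongly compact cardinals $\kappa_9,\kappa_8,\kappa_7$ -- while every other prescribed equality is obtained directly by choosing the relevant parameters equal: $\lambda_{\mfrak}=\lambda_{\hfrak}$ (equal to whatever common value is wanted, possibly $\aleph_1$) for equalities among $\aleph_1,\mfrak,\pfrak$, and $\lambda_3=\lambda_4=\dots=\mu$ as desired for equalities among $\bfrak,\nonM,\covM,\dfrak,\nonN,\cofN,\cfrak$, there being no compact cardinal interposed on the right.

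It remains to close, for the given pattern, some of the three rigid gaps. Since the blocks are intervals, collapsed rigid gaps that are consecutive merge, so at most two L\'evy collapses are needed, acting on disjoint intervals of cardinals. Concretely: if the block of $\bfrak$ reaches leftward at least to $\addN$ (possibly further to $\pfrak$ or $\mfrak$), take its intended common value to be a regular $\delta$ with $\lambda_{\hfrak}\le\delta<\lambda_1$ -- with $\delta=\lambda_{\hfrak}$ when $\pfrak$ lies in this block, and $\lambda_{\mfrak}=\lambda_{\hfrak}=\aleph_1=\delta$ when $\mfrak$ does -- and force in addition with $\col(\delta,\lambda_3)$, which sends $\lambda_1,\lambda_2,\lambda_3$ (and, for the characteristics that actually belong to the block, the equal $\lambda_4,\dots$) to cardinality $\delta$ while fixing $\delta$ and all cardinals above $\lambda_3$. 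If instead the pattern prescribes, say, $\pfrak=\addN<\covN=\bfrak$, use the two collapses $\col(\lambda_{\hfrak},\lambda_1)$ and $\col(\lambda_1^+,\lambda_3)$, whose acting intervals $(\lambda_{\hfrak},\lambda_1]$ and $(\lambda_1^+,\lambda_3]$ are disjoint (and $\lambda_1^+$ stays a regular cardinal throughout); the remaining sub-cases are entirely analogous, and the boundary case $\lambda=\kappa=\delta$ is common to the ``preserved'' and the ``collapsed'' description. All the cardinal-arithmetic side conditions -- the hypotheses of Theorem~\ref{thm:bla66}, $\lambda_3^{<\delta}=\lambda_3$ for the collapses, and the like -- are secured by assuming GCH in the ground model above $\kappa_7$, which is consistent with the stated large-cardinal hypotheses, and by the fact that there is room for the targets $\delta$ strictly between the fixed compacts.

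The one thing that needs checking is that a single collapse $Q=\col(\delta,\theta)$ has the advertised effect, and this is essentially Lemma~\ref{lem:blassdistr} together with Corollary~\ref{cor:trivial}. The poset $P^*$ coming from Theorem~\ref{thm:bla66} is ccc throughout the collapse phase (with the convention $\lambda_{\pfrak}=\lambda_{\hfrak}$ the $\lambda_{\pfrak}^{<\lambda_{\pfrak}}$-factor is not needed at all), so $P^*$ is $\delta$-cc, $Q$ is ${<}\delta$-closed, and by Easton's lemma $Q$ is ${<}\delta$-distributive over $P^*$ and adds no reals. Consequently: every \mlike\ characteristic with $P^*$-value below $\delta$ (in particular $\mfrak$, $\pfrak$, $\hfrak$, and the Knaster numbers) is preserved by Corollary~\ref{cor:trivial}; a Blass-uniform (hence \hlike) characteristic $\mathfrak x$ with $P^*$-value $\lambda\in(\delta,\theta]$ is forced to satisfy $\mathfrak x\le\delta$, since $Q$ makes $\lambda$ have cardinality $\delta$, while its strong witness $\COB_{R_{\mathfrak x}}(\lambda,\vartheta)$ yields, by Lemma~\ref{lem:blassdistr}, $\COB_{R_{\mathfrak x}}(\delta,|\vartheta|)$ and hence $\mathfrak x\ge\delta$ -- here $\vartheta\le\mu$, and either $\mu>\theta$ is untouched or, when the whole right side is to be merged with $\bfrak$, the continuum is itself brought down to $\delta$, which is harmless; and every characteristic and every $\LCU$- or $\COB$-witness living strictly above $\theta$ is left intact, since $Q$ is $\theta^+$-cc and adds no reals. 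Thus each chosen characteristic lands exactly on $\delta$ and all other prescribed values survive; composing the at most two collapses -- doing the lower interval first, which keeps the iteration $\lambda_1^+$-cc so that Easton's lemma reapplies to the upper collapse (taken with critical cardinal $\lambda_1^+$) -- and running the same argument with the \ref{versionB}-analogue of Theorem~\ref{thm:bla66} for the second sequence, completes the proof. I expect the real work to be organisational rather than conceptual: packaging this case analysis into one clean ``collapsing lemma'' and keeping all the GCH-type side conditions mutually consistent across the choices of the $\lambda_i$ and of the targets $\delta$ -- the forcing content being carried entirely by the already-established Lemma~\ref{lem:blassdistr} and Corollary~\ref{cor:trivial}.
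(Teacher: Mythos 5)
Your proposal is correct in substance and rests on exactly the machinery the paper uses: take the product of the forcing from Theorem~\ref{thm:bla66} with L\'evy collapses, and preserve everything via Easton's lemma, Lemma~\ref{lem:blassdistr} and Corollary~\ref{cor:trivial} (this is precisely the content of the paper's Lemma~\ref{lem:collapse}). The difference is in the decomposition. The paper starts from a model in which \emph{all} values are pairwise distinct (with the desired pattern only among $\aleph_1,\mfrak,\pfrak$ arranged in the initial forcing), partitions the sequence $(\pfrak,\addN,\dots,\cfrak)$ into the prescribed blocks, and then collapses \emph{every} non-singleton block to its minimum --- a uniform, case-free recipe with up to one collapse per block, which also feeds directly into the sharper Theorems~\ref{nogapsAsimple} and~\ref{nogapsA}. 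You instead exploit that Theorem~\ref{thm:bla66} already tolerates equalities among $\lambda_3\le\lambda_4\le\dots\le\mu$ and among $\lambda_\mfrak\le\lambda_\pfrak\le\lambda_\hfrak$, so only the gaps straddling $\kappa_9,\kappa_8,\kappa_7$ need collapsing, and after merging consecutive ones at most two collapses remain; the price is a case analysis on the block pattern. Your convention $\lambda_\pfrak=\lambda_\hfrak$ (legitimate, since $\hfrak$ is lost anyway) is a nice touch: it keeps the initial poset ccc and thereby sidesteps the chain-condition obstruction caused by the $\lambda_\pfrak^{<\lambda_\pfrak}$-factor, which the paper has to handle separately (see the footnote in the proof of Theorem~\ref{nogapsA}, where the collapse $\col(\kappa_1,\lambda_1\times\kappa_1)$ is used to push $\pfrak$ down instead).

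One sub-case of your recipe is stated incorrectly: when $\mfrak$ (but not $\aleph_1$) lies in the block of $\addN$, you set $\lambda_\mfrak=\lambda_\hfrak=\aleph_1=\delta$, which would force $\mfrak=\aleph_1$ and violate the prescribed strict inequality $\aleph_1<\mfrak$. The fix stays inside your framework: take $\delta=\lambda_\mfrak=\lambda_\pfrak=\lambda_\hfrak>\aleph_1$ regular (and $\delta=\aleph_1$ only when $\aleph_1$ itself belongs to the block); after $\col(\delta,\cdot)$ one still gets $\mfrak=\delta$, since $\mfrak$ is \mlike\ with value $\ge\kappa=\delta$ (Corollary~\ref{cor:trivial}(a)) and $\mfrak\le\pfrak=\delta$, the latter preserved because $\pfrak$ is \tlike\ with $\lambda=\kappa$ (Corollary~\ref{cor:trivial}(d)). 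With that repair, your case analysis goes through, and the remaining bookkeeping (chain conditions of the successive products, GCH-type hypotheses for the collapses) is handled the same way as in the paper.
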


(Note that we lose $\hfrak$ in the process.)

To prove this claim, we use the following:


\begin{assumption}\label{asm:prelim}
\begin{enumerate}
    \item $\kappa$ is regular uncountable.
    \item $\theta\ge\kappa$, $\theta=\theta^{<\kappa}$.
    \item $P$ is $\kappa$-cc
        and forces that $\mathfrak x=\lambda$
        for some characteristic $\mathfrak x$ (so in particular
        $\lambda$ is a cardinal in the $P$-extension).
    \item $Q$ is $\mathord<{\kappa}$-closed.
    \item\label{item:ccwr} $P\Vdash Q\text{ is }\theta^+\textnormal{-cc}$.\footnote{I.e., $P$ forces that all antichains of $Q$ have size ${\le}\theta$.}
    \item We set $P^+ := P\times Q=P*Q$.
    We call the $P^+$-extension $V''$ and the intermediate $P$-extension $V'$.
\end{enumerate}
\end{assumption}
(We will actually have $|Q|=\theta$, which implies~(\ref{item:ccwr})).


Let us list a few simple facts:
\begin{enumerate}[({P}1)]
    \item In $V'$, all $V$-cardinals $ {\ge}\kappa$ are still cardinals, and
        $Q$ is a $\mathord<\kappa$-distributive forcing
        (due to Easton's lemma).
        So we can apply Lemma~\ref{lem:blassdistr} and Corollary~\ref{cor:trivial}.
    \item
        Let $\mu$ be the successor (in $V$ or equivalently in $V'$)
        of $\theta$.
        So in $V'$, $Q$ is $\mu$-cc and
        preserves all cardinals $\le \kappa $ as well as all cardinals
        $\ge \mu$.
   \item So if $V\models$``$\kappa\le \nu\le \theta$'',
   then in $V''$, $\kappa\le |\nu|<\mu$. The $V''$ successor of $\kappa$
   is $\le \mu$.
\end{enumerate}


We now apply it to a collapse:

\begin{lemma}\label{lem:collapse}
Let $R$ be a Borel relation, $\kappa$  be regular,
$\theta>\kappa$, $\theta^{<\kappa}=\theta$,
$P$ $\kappa$-cc, and set
$Q:=\col(\kappa,\theta)$,
i.e., the set of partial functions $f:\kappa \to \theta$
of size $<\kappa$. Then:
\begin{enumerate}[(a)]
    \item $P\times Q$ forces $|\theta|=\kappa$.

    \item If $P$ forces that  $\lambda$  is a cardinal then
\[P\times Q \Vdash|\lambda|=
       \begin{cases}
         \kappa  &\text{if (in $V$)  $\kappa\le \lambda\le \theta$}\\
         \lambda &\text{otherwise.}
       \end{cases}
\]

    \item If $\mathfrak{x}$ is \mlike, $\lambda<\kappa$ and $P\Vdash \mathfrak{x}=\lambda$, then $P\times Q\Vdash\mathfrak x=\lambda$.
    \item If $\mathfrak{x}$ is \mlike\ and $P\Vdash \mathfrak{x}\geq\kappa$, then $P\times Q\Vdash\mathfrak x\geq\kappa$.
    \item If $R$ is a Borel relation 
    then
       \begin{enumerate}[(i)]
           \item $P\Vdash$``$\lambda$ regular and $\LCU_R(\lambda)$'' implies $P\times Q\Vdash\LCU_R(|\lambda|)$.
           \item $P\Vdash$``$\lambda$ is regular and $\COB_R(\lambda,\mu)$'' implies $P\times Q\Vdash\COB_R(|\lambda|,|\mu|)$.
       \end{enumerate}
\end{enumerate}
\end{lemma}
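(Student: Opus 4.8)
The plan is to verify each item by combining the generic facts (P1)--(P3) listed after Assumption~\ref{asm:prelim} with the preservation lemmas from Section~\ref{sec:nnr} (Lemma~\ref{lem:blassdistr} and Corollary~\ref{cor:trivial}), exactly as in Corollary~\ref{cor:lqhwo5u25}. First I would check that the hypotheses of Assumption~\ref{asm:prelim} are met with $\theta=\theta^{<\kappa}$ as given: $Q=\col(\kappa,\theta)$ is ${<}\kappa$-closed, has size $\theta^{<\kappa}=\theta$, hence is $\theta^+$-cc, and $P$ is $\kappa$-cc by hypothesis. So (P1)--(P3) all apply, and in particular, in the $P$-extension $V'$, $Q$ is ${<}\kappa$-distributive by Easton's lemma.

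For (a) and (b): these are pure cardinal-arithmetic bookkeeping. In $V'$, $Q$ is still ${<}\kappa$-closed (no new ${<}\kappa$-sequences were added since $P$ is $\kappa$-cc) and collapses $\theta$ to $\kappa$; I would invoke the standard fact that $\col(\kappa,\theta)$ makes $|\theta|=\kappa$ and preserves cardinals $\le\kappa$ and $\ge\theta^+$, which is exactly (P2)--(P3) spelled out. Since any $\lambda$ that $P$ forces to be a cardinal lies in $V$ (as $P$ is $\kappa$-cc, a cardinal $\ge\kappa$ in the extension was a cardinal in $V$; and cardinals $<\kappa$ are absolute), the case split in (b) follows: $\lambda$ with $\kappa\le\lambda\le\theta$ gets collapsed to $\kappa$, and all other $\lambda$ are preserved.

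For (c), (d), (e): these are immediate applications of the Section~\ref{sec:nnr} machinery via (P1). Since $Q$ is ${<}\kappa$-distributive in $V'$, item (i) of Corollary~\ref{cor:trivial} applies with $P^+=P*Q$: if $\mathfrak x$ is \mlike\ and $\lambda<\kappa$ then $P\times Q\Vdash\mathfrak x=\lambda$ (giving (c)), and if $P\Vdash\mathfrak x\ge\kappa$ then $P\times Q\Vdash\mathfrak x\ge\kappa$ (giving (d)). For (e), I would apply Lemma~\ref{lem:blassdistr}: since $Q$ is $\theta^+$-cc and ${<}\kappa$-distributive in $V'$, part (1) of that lemma gives $Q\Vdash\LCU_R(\cof(\lambda))=\LCU_R(|\lambda|)$ whenever $P\Vdash\LCU_R(\lambda)$ with $\lambda$ regular — here $\cof(\lambda)$ in $V'$ equals $|\lambda|$ in $V''$ because either $\lambda<\kappa$ (preserved) or $\lambda$ is collapsed to $\kappa$ (in which case $|\lambda|=\kappa$, and $\cof(\lambda)=\kappa$ after forcing with $\col(\kappa,\theta)$ since $\kappa\le\lambda\le\theta$) or $\lambda>\theta$ (preserved). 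Similarly part (2) of Lemma~\ref{lem:blassdistr} handles $\COB_R(\lambda,\mu)$: it yields $Q\Vdash\COB_R(\min(|\lambda|,\kappa),|\mu|)$, and since $\lambda$ is forced regular and either $\le\kappa$ or collapsed to $\kappa$ or $>\theta$, we get $\min(|\lambda|,\kappa)=|\lambda|$ in $V''$, so $P\times Q\Vdash\COB_R(|\lambda|,|\mu|)$.

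The main obstacle is the careful cofinality/regularity tracking in (e): one must be sure that after the collapse the index set of the $\LCU$ witness (resp.\ the directed order for $\COB$) has the right cofinality, namely $|\lambda|$ computed in $V''$, and that this matches what Lemma~\ref{lem:blassdistr} delivers. The subtlety is the case $\kappa\le\lambda\le\theta$: here $\lambda$ loses its regularity, $|\lambda|=\kappa$ in $V''$, and one needs $\cof^{V''}(\lambda)=\kappa$ — which holds because $\col(\kappa,\theta)$ is ${<}\kappa$-closed so adds no short cofinal sequences, forcing the collapsed $\lambda$ to have cofinality exactly $\kappa$. Everything else is routine once (P1)--(P3) are in hand.
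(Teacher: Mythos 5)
Your proposal follows essentially the same route as the paper: verify Assumption~\ref{asm:prelim}, get ${<}\kappa$-distributivity of $Q$ over the $P$-extension from Easton's lemma, and then quote Lemma~\ref{lem:blassdistr} and Corollary~\ref{cor:trivial}, together with the observation that a regular $\lambda$ with $\kappa\le\lambda\le\theta$ acquires cofinality and cardinality $\kappa$ after the collapse. Two small repairs are needed, though both are fixable with tools you already cite. First, your parenthetical claim that $Q$ remains ${<}\kappa$-closed in $V'$ ``since $P$ is $\kappa$-cc'' is false: a $\kappa$-cc forcing certainly adds new ${<}\kappa$-sequences (e.g.\ reals), so $Q^V$ need not stay closed in $V'$; what you actually need, and already have from Easton's lemma, is ${<}\kappa$-distributivity of $Q$ in $V'$, both for cardinal preservation below $\kappa$ and for the statement $\cof^{V''}(\lambda)=\kappa$ when $\kappa<\lambda\le\theta$. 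Second, in item (e)(ii) your case analysis via the ``min'' form of Lemma~\ref{lem:blassdistr}(2) breaks down when $\lambda>\theta$: there $\min(|\lambda|,\kappa)=\kappa<|\lambda|$, so that clause only yields the weaker $\COB_R(\kappa,|\mu|)$; instead use the first clause of Lemma~\ref{lem:blassdistr}(2), which applies because $Q$ is $\theta^+$-cc in $V'$ and $\lambda\ge\theta^+$, giving $\COB_R(\lambda,|\mu|)$ directly. With these corrections your argument coincides with the paper's proof.
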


\begin{proof}
    As mentioned, Assumption~\ref{asm:prelim} is met; in particular,
    $P$ forces that $\check Q$ is ${<}\kappa$-distributive (by \ref{asm:prelim}(P2)), so we can use
    Lemma~\ref{lem:blassdistr} and Corollary~\ref{cor:trivial}. Also note that, whenever $\kappa<\lambda\leq\theta$ and $P\Vdash$``$\lambda$ is regular'', $P\times Q$ forces $\cof(\lambda)=\kappa=|\lambda|$.
\end{proof}


So we can start, e.g., with a
forcing $P_0$ as in  Theorem~\ref{thm:bla66}:
$P_0$ is $\lambda_\pfrak^+$-cc, and forces
strictly increasing values to the characteristics
in the first, say, sequence of Lemma~\ref{lem:wqr33}.

We now pick some $\kappa_0<\theta_0$,
satisfying
$\lambda_\pfrak< \kappa_0$ and the assumptions of the previous Lemma, i.e.,
$\kappa_0$ is regular and $\theta_0^{{<}\kappa_0}=\theta_0$.
Let $Q_0$ be the collapse of $\theta_0$ to $\kappa_0$, a forcing of size $\theta_0$. So $P_1:=P_0\times Q_0$ is $\theta^+_0$-cc and,
according to the previous Lemma,
still forces the ``same'' values (and in fact strong witnesses)
to the Cicho\'n-characteristics
(including the case that any value $\lambda_i$ with
$\kappa_0<\lambda_i\le\theta_0$ is collapsed to $|\lambda_i|=\kappa_0$).
The \mlike\ invariants below $\kappa_0$,
i.e., $\mfrak$ and $\pfrak$, are also unchanged.

We now pick another pair $\theta_0<\kappa_1<\theta_1$
(with the same requirements) and take the product of $P_1$
with the collapse $Q_1$ of $\theta_1$ to $\kappa_1$, etc.

In the end, we get $P_0\times Q_0\times \cdots\times Q_n$.
Each characteristic which by $P$ was forced to have value $\lambda$
now is forced to have value $|\lambda|$, which is
$\kappa_m$ if $\kappa_m\le \lambda\le \theta_m$ for some $m$,
and $\lambda$ otherwise.
This immediately gives the

\begin{proof}[Proof of Lemma~\ref{lem:wqr33}]
We start with GCH, and construct the inital forcing to already result in
the desired (in)equalities between $\aleph_1,\mfrak,\pfrak$
and to result in pairwise different regular  Cicho\'n values $\lambda_i$ and $\pfrak<\addN$.

Let $(I_m)_{m\in M}$ be the interval partition of the sequence
$(\pfrak,\addN,\dots,\cfrak)$ indicating which characteristics we want
to identify.
For each non-singleton $I_m$, let $\kappa_m$ be the value
of the smallest characteristic in $I_m$, and $\theta_m$ the largest.
Note that $\theta_m<\kappa_{m+1}<\theta_{m+1}$.
Then $P_0\times Q_0\times \cdots\times Q_{M-1}$ forces
that all characteristics in $I_m$ have value $\kappa_m$, as desired.
\end{proof}

Similarly and easily we get the following:
\begin{lemma}\label{lem:uihwqer5}
We can assign the values $\aleph_1,
\aleph_2,\dots,\aleph_{12}$
to the first sequence of Lemma~\ref{lem:wqr33}
(as in Figure~\ref{fig:increasing}).

We can do the same for the second sequence.
\end{lemma}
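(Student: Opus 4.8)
The plan is to rerun the collapsing construction described just before the proof of Lemma~\ref{lem:wqr33}, but to start from an especially ``compressed'' instance of Theorem~\ref{thm:bla66} and then choose the collapses so that the (necessarily large) intervals created by the strongly compact cardinals are collapsed away completely, leaving exactly $\aleph_4,\dots,\aleph_{12}$ for the Cicho\'n values. We do the first sequence in detail; the second is the same with one extra collapse.

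We will work over a model of GCH carrying strongly compact cardinals $\kappa_9<\kappa_8<\kappa_7$, and apply the Version~A case of Theorem~\ref{thm:bla66} with $\lambda_{\mfrak}=\aleph_2$, $\lambda_{\pfrak}=\lambda_{\hfrak}=\aleph_3$, $k_0=1$, and the Cicho\'n targets chosen as tight as possible above the compacts:
\[
\lambda_1=\kappa_9^+,\qquad \lambda_2=\kappa_8^+,\qquad \lambda_3=\kappa_7^+,\qquad \lambda_{i+1}=\lambda_i^+\ (3\le i\le 7),\qquad \mu=\lambda_8^+,
\]
with $\lambda_6$ chosen regular. Since $\kappa_7,\kappa_8,\kappa_9$ are inaccessible, all the cardinal-arithmetic hypotheses of Theorem~\ref{thm:bla66} follow from GCH (e.g.\ $\aleph_3^{<\aleph_3}=\aleph_3$, $\lambda_j^{\kappa_j}=\lambda_j$, $\lambda_6^{<\lambda_3}=\lambda_6$, $\mu^{<\lambda_{\hfrak}}=\mu$). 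Let $P_0$ be the resulting $\aleph_4$-cc, cofinality-preserving poset. With $k_0=1$ it forces $\mfrak=\mfrak(k\textrm{-Knaster})=\aleph_2$ for all $k\ge1$, $\pfrak=\aleph_3$, and the Cicho\'n values $\addN=\lambda_1$, $\covN=\lambda_2$, $\bfrak=\lambda_3$, $\nonM=\lambda_4$, $\covM=\lambda_5$, $\dfrak=\lambda_6$, $\nonN=\lambda_7$, $\cofN=\lambda_8$, $\cfrak=\mu$, each Cicho\'n value with strong witnesses all of whose $\LCU$/$\COB$-parameters are $\le\mu$ (as arranged in the proof of Theorem~\ref{thm:bla66} via Lemma~\ref{lem:subforcing}).

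Now put $Q_0:=\col(\aleph_4,\kappa_8)$, $Q_1:=\col(\kappa_8^+,\kappa_7)$, and $P:=P_0\times Q_0\times Q_1$. Since $|Q_0|=\kappa_8^{<\aleph_4}=\kappa_8$ and $P_0$ is $\aleph_4$-cc, a counting argument (any antichain of $P_0\times Q_0$ splits, over the $\le\kappa_8$ second coordinates, into antichains of $P_0$, each of size $<\aleph_4$) shows $P_0\times Q_0$ is $\kappa_8^+$-cc; likewise $|Q_1|=\kappa_7^{<\kappa_8^+}=\kappa_7$ makes $P$ $\kappa_7^+$-cc. Hence Lemma~\ref{lem:collapse} applies twice, first with $(\kappa,\theta)=(\aleph_4,\kappa_8)$ over $P_0$ and then with $(\kappa,\theta)=(\kappa_8^+,\kappa_7)$ over $P_0\times Q_0$. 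Its parts (a)--(b) give $P\Vdash|\kappa_8|=\aleph_4$ and $P\Vdash|\kappa_7|=|\kappa_8^+|=\aleph_5$; since $\lambda_1=\kappa_9^+$ lies in $[\aleph_4,\kappa_8]$ and is therefore collapsed, while $\lambda_2=\kappa_8^+$, $\lambda_3=\kappa_7^+$ and the successive cardinals $\lambda_4,\dots,\lambda_8,\mu$ above $\kappa_7$ are untouched, we obtain
\[
|\lambda_1|=\aleph_4,\qquad |\lambda_2|=\aleph_5,\qquad |\lambda_3|=\aleph_6,\qquad |\lambda_{3+m}|=\aleph_{6+m}\ (1\le m\le5),\qquad |\mu|=\aleph_{12}.
\]
By Lemma~\ref{lem:collapse}(e) the strong $\LCU$/$\COB$-witnesses for the Cicho\'n characteristics are transported under $\lambda\mapsto|\lambda|$, so by Fact~\ref{fact:bla23424} $P$ forces $\addN=\aleph_4$, $\covN=\aleph_5$, $\bfrak=\aleph_6$, $\nonM=\aleph_7$, $\covM=\aleph_8$, $\dfrak=\aleph_9$, $\nonN=\aleph_{10}$, $\cofN=\aleph_{11}$, $\cfrak=\aleph_{12}$; and by Lemma~\ref{lem:collapse}(c) the \mlike\ values $\mfrak=\aleph_2$, $\pfrak=\aleph_3$ (and each $\mfrak(k\textrm{-Knaster})$), being $<\aleph_4$, stay fixed. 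Together with $\aleph_1=\aleph_1$ this is exactly the assignment of $\aleph_1,\dots,\aleph_{12}$ to the first sequence of Lemma~\ref{lem:wqr33}, as in Figure~\ref{fig:increasing}. (As anticipated $\hfrak$ and $\gfrak$ are lost; only $\aleph_3=\pfrak\le\hfrak\le\gfrak\le\cfrak$ survives.)

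For the second sequence one argues identically from the Version~B analogue of Theorem~\ref{thm:bla66}, which provides a fourth strongly compact $\kappa_6$ with $\kappa_7<\kappa_6<\lambda_4$: take $\lambda_4=\kappa_6^+$, $\lambda_{i+1}=\lambda_i^+$ for $4\le i\le7$, and insert one further collapse $Q_2:=\col(\kappa_7^+,\kappa_6)$. The only points requiring attention---and neither is a genuine obstacle---are the ``off-by-one'' bookkeeping (one collapses each compact $\kappa_j$ itself, with $\lambda_j$ chosen to be $\kappa_{10-j}^+$, so that the surviving cardinals line up as consecutive alephs) and the verification that the iterated products remain $\kappa_j^+$-cc at each stage, which follows as above from $|Q_m|=\kappa_{8-m}$ together with the chain condition reached so far.
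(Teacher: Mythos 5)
Your proposal is correct and takes essentially the same route as the paper: start under GCH from the forcing of Theorem~\ref{thm:bla66} with suitably chosen regular Cicho\'n values and $\mfrak=\aleph_2$, $\pfrak=\aleph_3$, then multiply with ground-model collapses, using Lemma~\ref{lem:collapse} (plus the chain-condition bookkeeping of Assumption~\ref{asm:prelim}) to transport the strong witnesses and the \mlike\ values. The only difference is bookkeeping: the paper collapses in three steps $\col(\aleph_4,\lambda_1)$, $\col(\lambda_1^+,\lambda_2)$, $\col(\lambda_2^+,\lambda_3)$, whereas you choose $\lambda_j=\kappa_{10-j}^+$ and manage with two collapses ending at the compact cardinals.
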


\begin{proof}
Again, start with GCH and $P_0$ forcing the desired values for
$\mfrak$ and $\pfrak$ (now $\aleph_2$ and $\aleph_3$)
and pairwise distinct
regular Cicho\'n values $\lambda_i$.
Then pick $\kappa_0=\lambda_\pfrak^+=\aleph_4$
and $\theta_0=\lambda_1$ (which then
becomes $\aleph_4$ after the collapse).
Then set $\kappa_1=\lambda_1^+$ (which would be $\aleph_5$ after the
first collapse), and $\theta_1=\lambda_2$, etc.
\end{proof}


We can of course just as well assign the values $(\aleph_{\omega\cdot m+1})_{1\le m\le 12}$ instead of $(\aleph_m)_{1\le m\le 12}$. It is a bit awkward to
make precise the (not entirely correct) claim ``we can assign whatever we want''; nevertheless we will do just that in the rest of this section.


\begin{theorem}\label{nogapsAsimple}
Assume GCH. Let $1\leq k_0\leq \omega$,
let $1\leq\alpha_{\mfrak}\leq\alpha_{\pfrak}\leq\alpha_1\le\cdots
\le\alpha_9$
be a sequence of successor ordinals, and $\kappa_9<\kappa_8<\kappa_7$ compact cardinals with $\kappa_9>\alpha_9$.
Then there is a poset $P$ which forces (1) and (2) of Lemma~\ref{11values} for $\lambda_\mfrak=\aleph_{\alpha_\mfrak}$
and, in addition,
        \begin{multline*}
    \pfrak=\aleph_{\alpha_{\pfrak}},\ 
    \addN=\aleph_{\alpha_1},\ \covN=\aleph_{\alpha_2},\ \mathfrak{b}=\aleph_{\alpha_3},\ \nonM=\aleph_{\alpha_4},\\
        \covM=\aleph_{\alpha_5},\ \mathfrak{d}=\aleph_{\alpha_6},\ \nonN=\aleph_{\alpha_7},\ \cofN=\aleph_{\alpha_8},\text{\ and }\mathfrak{c}=\aleph_{\alpha_9}.
   \end{multline*}
\end{theorem}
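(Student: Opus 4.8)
The plan is to realise the prescribed pattern first with widely separated ``scaffold'' cardinals, and then squeeze everything onto the desired $\aleph$'s by a finite product of ground-model L\'evy collapses, carrying the witnesses along via Lemma~\ref{lem:collapse}. First I would work over a model of GCH and fix in $V$ regular scaffolds $\lambda_1<\dots<\lambda_8<\mu$ with
\[ \aleph_{\alpha_{\pfrak}}\le\kappa_9<\lambda_1<\kappa_8<\lambda_2<\kappa_7<\lambda_3<\lambda_4<\dots<\lambda_8<\mu; \]
this is possible since $\aleph_{\alpha_9}<\kappa_9$, and under GCH the arithmetic side conditions ($\lambda_j^{\kappa_j}=\lambda_j$ for $j=7,8$, $\lambda_6^{<\lambda_3}=\lambda_6$, $(\aleph_{\alpha_{\pfrak}})^{<\aleph_{\alpha_{\pfrak}}}=\aleph_{\alpha_{\pfrak}}$, $\mu^{<\aleph_{\alpha_{\pfrak}}}=\mu$) hold automatically. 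Applying Theorem~\ref{thm:bla66} with $\lambda_{\mfrak}:=\aleph_{\alpha_{\mfrak}}$, $\lambda_{\pfrak}:=\lambda_{\hfrak}:=\aleph_{\alpha_{\pfrak}}$, the scaffolds $\lambda_1,\dots,\lambda_8$ and continuum $\mu$: because $\lambda_{\pfrak}=\lambda_{\hfrak}$, its proof does not pass to a product with $(\aleph_{\alpha_{\pfrak}})^{<\aleph_{\alpha_{\pfrak}}}$, so the poset $P_0$ it yields is in fact \emph{ccc} (being the complete subforcing $P'$ of the ccc poset of Lemma~\ref{11values}). It forces (1)--(2) of Lemma~\ref{11values} for $\lambda_{\mfrak}=\aleph_{\alpha_{\mfrak}}$, $\pfrak=\hfrak=\gfrak=\aleph_{\alpha_{\pfrak}}$, and $\addN=\lambda_1,\covN=\lambda_2,\bfrak=\lambda_3,\nonM=\lambda_4,\covM=\lambda_5,\dfrak=\lambda_6,\nonN=\lambda_7,\cofN=\lambda_8,\cfrak=\mu$, each with strong witnesses; crucially, every Cicho\'n pair $(\mathfrak x,\mathfrak y)=(\mathfrak b_{R_{\mathfrak x}},\mathfrak d_{R_{\mathfrak x}})$ is witnessed by $\LCU_{R_{\mathfrak x}}(\lambda')$ for all regular $\lambda'\in[\lambda_{\mathfrak x},\lambda_{\mathfrak y}]$ and by $\COB_{R_{\mathfrak x}}(\lambda_{\mathfrak x},\lambda_{\mathfrak y})$ with both indices among $\lambda_1,\dots,\lambda_8<\mu$ ($\neq^*$ playing the role of $R_{\nonM}$).

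Next, write $\rho_1,\dots,\rho_9$ for $\lambda_1,\dots,\lambda_8,\mu$ (the scaffolds of $\addN,\dots,\cofN,\cfrak$) and $\gamma_i:=\alpha_i$, and split $\{1,\dots,9\}$ into maximal blocks on which the $\gamma_i$ are constant. Processing the blocks from bottom to top, I would choose in $V$ collapses $Q_m=\col(\hat\kappa_m,\hat\theta_m)^V$ ($m<N$): for the block $\{i,\dots,j\}$, destined for $\aleph_{\gamma_i}$, set $\hat\theta_m:=\rho_j$ and let $\hat\kappa_m$ be the ground-model cardinal that, after $Q_0\times\dots\times Q_{m-1}$, is the $\aleph_{\gamma_i}$ of the corresponding intermediate extension, always arranging $\hat\theta_{m-1}<\hat\kappa_m$ (so $\hat\kappa_0=\aleph_{\alpha_1}$, which coincides with $\aleph_{\alpha_{\pfrak}}$ exactly when $\alpha_1=\alpha_{\pfrak}$). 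Under GCH, $\hat\theta_m^{<\hat\kappa_m}=\hat\theta_m$ in $V$, and $P_0\times Q_0\times\dots\times Q_{m-1}$ is ccc times a forcing of size $\le\hat\theta_{m-1}$, hence $\hat\kappa_m$-cc, while $Q_m$ is ${<}\hat\kappa_m$-closed in $V$; so Lemma~\ref{lem:collapse} applies at each step. Put $P:=P_0\times Q_0\times\dots\times Q_{N-1}$; each factor preserves cofinalities, hence so does $P$.

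Iterating clause (e) of Lemma~\ref{lem:collapse}, in $V^P$ each $\LCU_{R_{\mathfrak x}}(\lambda')$ and each $\COB_{R_{\mathfrak x}}(\lambda_{\mathfrak x},\lambda_{\mathfrak y})$ survives with its parameters replaced by their $V^P$-cardinalities, which by the choice of the $Q_m$'s are exactly $\aleph_{\alpha_{\mathfrak x}}$ and $\aleph_{\alpha_{\mathfrak y}}$, so Fact~\ref{fact:bla23424} pins $\mathfrak x=\aleph_{\alpha_{\mathfrak x}}$ for every Cicho\'n characteristic; and $\cfrak=|\mu|^{V^P}=\aleph_{\alpha_9}$ since $P_0$ already fixed the reals and the $Q_m$ add none. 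By clauses (c),(d) of Lemma~\ref{lem:collapse}, the \mlike\ invariants $\mfrak(k\text{-Knaster})$ ($k<\omega$) and $\pfrak$ keep their values: the ones strictly below $\hat\kappa_0$ are preserved outright, and one equal to $\hat\kappa_0$ (possible only when $\alpha_1=\alpha_{\pfrak}$, and for $\mfrak(k\text{-Knaster})$ only when also $\alpha_{\mfrak}=\alpha_{\pfrak}$) stays put because the \mlike\ bound ``$\ge\hat\kappa_0$'' meets $\mfrak(k\text{-Knaster})\le\pfrak\le\aleph_{\alpha_{\pfrak}}=\hat\kappa_0$. Hence $P$ is as required.

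The real work is the bookkeeping of the second step: choosing the ground-model bottoms $\hat\kappa_m$ so that, despite the shift of the $\aleph$-indexing caused by each collapse, the scaffold $\rho_i$ comes to rest exactly at $\aleph_{\alpha_i}$, while keeping $\hat\kappa_m>\hat\theta_{m-1}$ so that the ${<}\hat\kappa_m$-closure and $\hat\kappa_m$-cc demands of Lemma~\ref{lem:collapse} remain available all the way up. That last requirement is exactly why one needs $P_0$ genuinely ccc rather than merely $\aleph_{\alpha_{\pfrak}}^+$-cc: when $\pfrak$ is to be identified with $\addN$ the first collapse has bottom $\aleph_{\alpha_{\pfrak}}$ and so needs a ccc ground (this is also why I would route $\pfrak$ through the subforcing inside Theorem~\ref{thm:bla66} rather than through a product $P\times(\xi^{<\xi})$). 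The remaining points --- that the strong witnesses really persist through the whole cascade, and that the various coincidences among these values cause no clash --- are routine given Theorem~\ref{thm:bla66} and Lemma~\ref{lem:collapse}.
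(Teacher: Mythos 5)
Your proposal is correct in outline, but it takes a genuinely different route from the paper's. The paper proves the more general Theorem~\ref{nogapsA} (of which this statement is the special case, with Remark~\ref{rem:and-or-equal} covering equalities among $\alpha_1,\alpha_2,\alpha_3$): there the right-hand scaffolds are \emph{pre-positioned} via the indices $\beta_i=\max\{\alpha_i,\kappa_{10-i}+1\}$ and $\beta_i=\beta_3+(\alpha_i-\alpha_3)$ for $i\ge 4$, so that after at most \emph{three} collapses $\col(\kappa_i,\lambda_i)$ (one per compact-cardinal gap on the left) every value automatically lands on the prescribed $\aleph_{\alpha_i}$; and the case $\alpha_\pfrak=\alpha_1$ is handled by starting from the ccc poset of Lemma~\ref{11values} (which only forces $\pfrak\ge\kappa_9$) and using $\col(\kappa_1,\lambda_1\times\kappa_1)$, whose $\kappa_1^{<\kappa_1}$-generic pushes $\pfrak$ down to $\kappa_1$ as in Lemma~\ref{fact:wreqwr}. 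You instead keep all nine scaffolds widely separated and run a cascade of up to nine collapses, and you handle $\pfrak$ uniformly by taking $\lambda_\pfrak=\lambda_\hfrak$ in Theorem~\ref{thm:bla66}, so the base stays ccc and no case split (and no $\col(\kappa_1,\lambda_1\times\kappa_1)$ trick) is needed. Both work; the paper's version buys a much shorter cascade and no separation requirements on $\lambda_4,\dots,\lambda_8,\mu$, while yours buys uniformity in the treatment of $\pfrak$ at the price of the deferred bookkeeping: you must actually choose the right-hand scaffolds at least $\alpha_9$-many cardinals apart so that each block's bottom $\hat\kappa_m$ (the intermediate $\aleph_{\gamma_i}$, a successor cardinal since the $\alpha$'s are successor ordinals) satisfies $\hat\theta_{m-1}<\hat\kappa_m\le\hat\theta_m$; between $\lambda_1,\lambda_2,\lambda_3$ the compact cardinals provide this room for free, above $\lambda_3$ it is your choice of scaffolds that must provide it. This is routine but it is exactly the content you postpone.

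One step is under-justified as written: the preservation of the \emph{exact} values of $\pfrak$ and $\mfrak(k\text{-Knaster})$ when they equal the first collapse bottom $\hat\kappa_0$ (the case $\alpha_\pfrak=\alpha_1$). Clauses (c),(d) of Lemma~\ref{lem:collapse} only yield the lower bound $\ge\hat\kappa_0$; your chain ``$\mfrak(k\text{-Knaster})\le\pfrak\le\aleph_{\alpha_\pfrak}$'' presupposes that the upper bound on $\pfrak$ survives the collapses, which is not among the clauses you cite. It does survive, because $\pfrak$ is \plike\ and the tail $Q_0\times\cdots\times Q_{N-1}$ is ${<}\hat\kappa_0$-distributive over the $P_0$-extension (Easton), so Corollary~\ref{cor:trivial}(c),(d) for case (i) (equivalently Lemma~\ref{lem:trivial}(a),(d)) gives $P_0\times Q_0\times\cdots\Vdash\pfrak=\hat\kappa_0$, after which the later stages fall under Lemma~\ref{lem:collapse}(c). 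So the gap is a citation gap rather than a mathematical one, but it should be filled explicitly, since it is precisely the point where the paper resorts to its alternative device.
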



\begin{figure}
  \centering
\[
\xymatrix@=3.5ex{
&&&&            \aleph_5\ar[rdd] & \aleph_7 \ar[rdd]      &  \mye\ar@{=}[d]\ar[ddr]      & \aleph_{11}\ar[r] &\aleph_{12} \\
&&&&                               & \aleph_6\ar[u]  &  \aleph_9 &              \\
\aleph_1\ar[r] & \aleph_2\ar[r] & \aleph_3\ar@/^5mm/[rr]&\mye
& \aleph_4\ar[uu] & \mye\ar@{=}[u] &  \aleph_8\ar[u]& \aleph_{10}\ar[uu]
}
\]
    \caption{\label{fig:increasing}A possible assignment for
     Figure~\ref{fig:result} (note that we lose control of $\hfrak$): $\mfrak=\aleph_2$, $\pfrak=\aleph_3$,
     $\lambda_i=\aleph_{3+i}$ for $i=1,\dots,9$.}
\end{figure}

Actually, we will prove something more general:
We first formulate this more general result for the case $\alpha_1<\alpha_2<\alpha_3$; as explained in Remark~\ref{rem:and-or-equal}, there are variants of the theorem which allow $\alpha_1=\alpha_2$ and/or $\alpha_2=\alpha_3$.

\begin{theorem}\label{nogapsA}
   Assume GCH and $1\leq k_0\leq\omega$.
   Let $1\leq\alpha_{\mfrak}\leq\alpha_{\pfrak}\leq\alpha_1<\alpha_2<\alpha_3\leq\alpha_4\leq\ldots\leq\alpha_9$ be ordinals and assume that there are strongly compact cardinals $\kappa_9<\kappa_8<\kappa_7$ such that
   \begin{enumerate}[(i)]
       \item $\alpha_{\pfrak}\leq\kappa_9$, $\alpha_1<\kappa_8$ and $\alpha_2<\kappa_7$;
       \item for $i=1,2,3$, $\aleph_{\beta_{i-1}+(\alpha_i-\alpha_{i-1})}$ is regular,\footnote{This is equivalent to say that $\alpha_i$ is either a successor ordinal or a weakly inaccessible larger than $\beta_{i-1}$.} where $\beta_i:=\max\{\alpha_i,\kappa_{10-i}+1\}$ and $\alpha_0=\beta_0=0$;
       \item for $i\geq4$, $i\neq 6$, $\aleph_{\beta_3+(\alpha_i-\alpha_3)}$ is regular;
       \item $\cof(\aleph_{\beta_3+(\alpha_6-\alpha_3)})\geq\aleph_{\beta_3}$; and
       \item $\aleph_{\alpha_{\mfrak}}$ and $\aleph_{\alpha_{\pfrak}}$ are regular.
   \end{enumerate}
   Then we get a poset $P$ as in the previous Theorem.
\end{theorem}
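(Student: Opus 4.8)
The plan is to build $P$ in three layers, parallelling the strategy of Sections~\ref{sec:appl} and~\ref{sec:coll}. \emph{Layer~1 (the spread-out model).} Work under GCH. Following the proof of Lemma~\ref{11values} for \ref{versionA}, with $\lambda_{\mfrak}:=\aleph_{\alpha_{\mfrak}}$, choose the parameters of the initial finite-support iteration $P^{\vA}$ so that, after applying the three Boolean ultrapowers attached to the strongly compact cardinals $\kappa_9<\kappa_8<\kappa_7$, the resulting ccc poset $P^*$ forces: items~(1) and~(2) of Lemma~\ref{11values} (with the usual proviso that if $\alpha_{\mfrak}=\kappa_9$ only $\mfrak(k_0\text{-Knaster})\ge\kappa_9$ is obtained), the bound $\pfrak\ge\kappa_9$, and strong witnesses for the Cicho\'n-characteristics --- $\LCU_{R_{\mathfrak x}}$ at all relevant regular cardinals (in the generalized sense explained after Fact~\ref{fact:bla23424}, which lets $\dfrak$ be singular) together with the matching $\COB_{R_{\mathfrak x}}$ --- at the positions $\addN=\aleph_{\beta_1}$, $\covN=\aleph_{\beta_2}$, $\bfrak=\aleph_{\beta_3}$, and $\nonM,\covM,\dfrak,\nonN,\cofN,\mathfrak c$ at $\aleph_{\beta_3+(\alpha_i-\alpha_3)}$ for $i=4,\dots,9$ respectively. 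The position $\aleph_{\beta_i}$ for the $i$-th left-side characteristic ($i=1,2,3$) is forced on us because the compact $\kappa_{10-i}$ stays weakly inaccessible and strictly below it, so $\aleph_{\beta_i}$ with $\beta_i=\max\{\alpha_i,\kappa_{10-i}+1\}$ is the least admissible value; hypotheses~(ii) and~(iii) say precisely that the ``pre-ultrapower'' positions $\aleph_{\beta_{i-1}+(\alpha_i-\alpha_{i-1})}$ ($i\le 3$) and $\aleph_{\beta_3+(\alpha_i-\alpha_3)}$ ($i\ge 4$, $i\ne 6$) are regular, which is what Theorem~\ref{oldnonba} requires of them; hypothesis~(iv) is exactly the cofinality of $\dfrak$ imposed by its strong witness; hypothesis~(v) feeds Layer~3.

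\emph{Layer~2 (collapsing the gaps).} For each $i\in\{1,2,3\}$ with $\beta_i>\alpha_i$ (equivalently $\alpha_i\le\kappa_{10-i}$) we compose with a L\'evy collapse $\col(\kappa',\theta')$ whose critical point and top are chosen so that, after the ensuing reindexing of the $\aleph$-scale, the cardinal carrying $\addN$ (respectively $\covN$, $\bfrak$) becomes exactly $\aleph_{\alpha_i}$. This is possible because $\aleph_{\alpha_i}$ is regular (hypothesis~(ii)) and because the compacts $\kappa_{10-i}$ are large enough that such a collapse swallows the segment of the $\aleph$-scale below $\aleph_{\beta_i}$ without disturbing the values below $\aleph_{\alpha_i}$ or the positions $\aleph_{\beta_3+(\alpha_j-\alpha_3)}$ of the right-hand characteristics. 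By Lemma~\ref{lem:collapse}, each such collapse: keeps all $\LCU$ and $\COB$ witnesses up to replacing each index $\lambda$ by $|\lambda|$ (clause~(e)) --- which is exactly what we want, since the chain $\alpha_3\le\dots\le\alpha_9$ lets equalities appear among the right-hand values; keeps the $\mlike$-values strictly below its critical point (clause~(c)); and keeps $\mlike$-lower bounds of the form ``$\ge$ critical point'' (clause~(d)). Composing the at-most-three collapses yields a poset $P^{**}$ forcing items~(1) and~(2) of Lemma~\ref{11values}, the Cicho\'n-characteristics at the prescribed $\aleph_{\alpha_i}$ with strong witnesses, and $\pfrak\ge\aleph_{\alpha_{\pfrak}}$.

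\emph{Layer~3 (pinning $\pfrak$) and conclusion.} If $\alpha_{\pfrak}<\alpha_1$, put $\xi:=\aleph_{\alpha_{\pfrak}}$ (so $\xi^{<\xi}=\xi$ by GCH and regularity of $\aleph_{\alpha_{\pfrak}}$) and set $P:=P^{**}\times(\xi^{<\xi})^V$; since $P^{**}\Vdash\xi\le\pfrak$, Lemma~\ref{fact:wreqwr}(a) gives $\pfrak=\xi$, and Lemma~\ref{lem:lqhwo5u25} shows that the $\mlike$-values below $\xi$, the $\mlike$-bounds $\ge\xi$, and all the $\LCU/\COB$ witnesses survive, so $P$ forces everything asserted. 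If $\alpha_{\pfrak}=\alpha_1$ one instead interleaves the factor $(\aleph_{\alpha_1}^{<\aleph_{\alpha_1}})^V$ with the position-$1$ collapse, exactly as in the proof of Theorem~\ref{thm:bla66}, to force $\pfrak=\aleph_{\alpha_1}=\addN$; and in either boundary case $\alpha_{\mfrak}=\alpha_{\pfrak}$ or $\alpha_{\mfrak}=\alpha_1$ one reads off $\mfrak(k_0\text{-Knaster})=\mfrak(\textnormal{precaliber})=\aleph_{\alpha_{\mfrak}}$ from $\mfrak\le\mfrak(\textnormal{precaliber})\le\pfrak$. The resulting $P$ is then as in Theorem~\ref{nogapsAsimple}.

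\emph{Main obstacle.} The real content is the bookkeeping inside Layers~2--3: showing that the at-most-three L\'evy collapses, together with their interleaving with the $\xi^{<\xi}$ factor, can be composed so that $\mfrak$-related invariants, $\pfrak$, and all the Cicho\'n-characteristics land \emph{simultaneously} at their prescribed $\aleph$-values with the correct cofinalities, while respecting the coincidences allowed by $\alpha_{\mfrak}\le\alpha_{\pfrak}\le\alpha_1$ and the weak inequalities $\alpha_3\le\dots\le\alpha_9$ --- the point the paper calls ``awkward to make precise'' after Lemma~\ref{lem:uihwqer5}. It goes through because collapse preserves strong witnesses and $\mlike$-values in the sharp form of Lemma~\ref{lem:collapse}, and because the collapse critical points can be taken right around the compacts $\kappa_{10-i}$, so that the reindexing of the $\aleph$-scale is transparent. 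Finally, the variants permitting $\alpha_1=\alpha_2$ and/or $\alpha_2=\alpha_3$ (with correspondingly fewer strongly compact cardinals) promised in Remark~\ref{rem:and-or-equal} follow by the identical three-layer argument, with the Boolean ultrapower --- hence the collapse --- at the coalesced index simply omitted.
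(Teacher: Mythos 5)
Your three layers line up with the paper's own proof: the paper likewise starts from the Boolean-ultrapower model with the spread-out regular values $\lambda_i=\aleph_{\beta_i}$, then applies the (at most three) collapses $\col(\kappa_i,\lambda_i)$ with exactly your bookkeeping for the shifting $\aleph$-indices, and handles the boundary case $\alpha_{\pfrak}=\alpha_1$ by folding a $\kappa_1^{<\kappa_1}$-factor into the first collapse (the paper uses $\col(\kappa_1,\lambda_1\times\kappa_1)$; your ``interleaving'' is the same device, though attributing it to the proof of Theorem~\ref{thm:bla66} is off, since that proof contains no collapse). The one substantive difference is the order of operations, and there your write-up has a genuine misstep: in Layer~3 you apply Lemma~\ref{fact:wreqwr}(a) (and implicitly Lemma~\ref{lem:lqhwo5u25}) to $P^{**}=P^*\times\col(\kappa_1,\lambda_1)\times\cdots$, but both lemmas require the first factor to be $\xi$-cc for $\xi=\aleph_{\alpha_{\pfrak}}$, and $P^{**}$ is not: already $\col(\kappa_1,\lambda_1)$ contains antichains of size $\lambda_1\geq\kappa_1>\xi$ (e.g.\ the conditions $\{(0,\gamma)\}$, $\gamma<\lambda_1$). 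Without that chain condition you cannot invoke Easton's lemma to see that $(\xi^{<\xi})^V$ stays $<\xi$-distributive over the $P^{**}$-extension, which is the engine of both lemmas; so the step as stated fails. This is precisely the kind of ordering issue the paper flags in the footnote to its proof (``the forcing from Theorem~\ref{thm:bla66} is not $\lambda_\pfrak$-cc, so we would not be able to apply Lemma~\ref{lem:collapse}\dots''), and it is why the paper pins $\pfrak$ \emph{before} collapsing: in the case $\alpha_{\pfrak}<\alpha_1$ it starts from the $\lambda_{\pfrak}^+$-cc poset of Theorem~\ref{thm:bla66} (which already forces $\pfrak=\lambda_{\pfrak}$) and only then collapses, each collapse having critical point above $\lambda_{\pfrak}^+$ so that Lemma~\ref{lem:collapse} applies.

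The damage is repairable because your final poset is a product: regroup it as $(P^*\times(\xi^{<\xi})^V)\times\col(\kappa_1,\lambda_1)\times\cdots$, apply Lemmas~\ref{lem:lqhwo5u25} and~\ref{fact:wreqwr} to the ccc poset $P^*$ (which does force $\xi\le\kappa_9\le\pfrak$) to get $\pfrak=\xi$ together with all $\LCU$/$\COB$ witnesses and the Knaster values in the $P^*\times Q$-extension, and then note that $P^*\times Q$ is $\xi^+$-cc with $\xi^+\le\kappa_1$, so Lemma~\ref{lem:collapse}(c),(d),(e) preserves everything through the collapses. After this regrouping your argument coincides with the paper's (modulo the cosmetic point that the paper routes the case $\alpha_{\pfrak}<\alpha_1$ through Theorem~\ref{thm:bla66}, which additionally controls $\hfrak$ and $\gfrak$ -- not needed for the statement here). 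As written, however, Layer~3 quotes the lemmas with a hypothesis that is false for $P^{**}$, so you should either reorder as above or adopt the paper's ordering.
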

\begin{proof}
    For $4\leq i\leq 9$ put $\beta_i:=\beta_3+(\alpha_i-\alpha_3)$. Also set $\lambda_{\mfrak}:=\aleph_{\alpha_{\mfrak}}$, $\lambda_{\pfrak}:=\aleph_{\alpha_{\pfrak}}$ and $\lambda_i:=\aleph_{\beta_i}$ for $1\leq i\leq 9$. Note that $\lambda_i$ is regular for $i\neq 6$, $\cof(\lambda_6)\geq\lambda_3$ and $\lambda_{\mfrak}\leq\lambda_{\pfrak}\leq\kappa_9<\lambda_1<\kappa_8<\lambda_2<\kappa_7<\lambda_3\leq\lambda_4\leq\ldots\leq\lambda_9$. In the case $\alpha_{\pfrak}<\alpha_1$ let $P$ be the $\lambda_{\pfrak}^+$-cc poset corresponding to Theorem~\ref{thm:bla66} (the modification of $P^{\vA*}$), otherwise let $P$ be the ccc poset corresponding to Lemma~\ref{11values} and forcing $\pfrak\geq\kappa_9$ and $\mfrak(k_0\text{-Knaster})=\mfrak(\textnormal{precaliber})=\lambda_{\mfrak}$ (or just $\mfrak(k_0\text{-Knaster})\geq\kappa_9$ when $\alpha_{\mfrak}=\kappa_9$).\footnote{This distinction is necessary: the forcing $P$ from Theorem~\ref{thm:bla66} is not $\lambda_\pfrak$-cc, so we would not be able to apply Lemma~\ref{lem:collapse} to $P\times\col(\lambda_\pfrak,\kappa_9^+)$.}

    \emph{Step 1.} We first assume $\alpha_{\pfrak}<\alpha_1$. In the case $\kappa_9<\alpha_1$ we have $\beta_1=\alpha_1$, so let $P_1:=P$; in the case $\alpha_1\leq\kappa_9$, we have $\beta_1=\kappa_9+1$ and $\lambda_1=\kappa_9^+$. Put $\kappa_1:=\aleph_{\alpha_1}$ and $P_1:=P\times\col(\kappa_1,\lambda_1)$. It is clear that $\kappa_1$ is regular and $\kappa_1\leq\lambda_1$ so, by Lemma~\ref{lem:collapse}, $P_1$ forces $\addN=\aleph_{\alpha_1}$ and that the values of the other cardinals are the same as in the $P$-extension. Even more, for any $\xi\geq\kappa_8$, $P_1$ forces $\aleph_\xi=\aleph^V_\xi$ because, in the ground model, $\kappa_8$ is an $\aleph$-fixed point between $\aleph_{\beta_1}$ and $\aleph_{\beta_2}$ (and thus between $\beta_1$ and $\beta_2$).

    Now assume $\alpha_{\pfrak}=\alpha_1$ (so $P$ is ccc) and let $\kappa_1:=\lambda_{\pfrak}=\aleph_{\alpha_1}$. Since $\alpha_1=\alpha_{\pfrak}\leq\kappa_9$, we have $\lambda_1=\kappa_9^+$, so we set $P_1:=P\times\col(\kappa_1,\lambda_1\times\kappa_1)$. This poset forces the same as the above, but for $\pfrak$ we just now $\pfrak\geq\kappa_1$ (or just $\mfrak(k_0\text{-Knaster})\geq\kappa_9$ when $\alpha_p=\kappa_9$), but $\pfrak\leq\kappa_1$ also holds because $\col(\kappa_1,\lambda_1\times\kappa_1)$ adds a $\kappa_1^{<\kappa_1}$-generic function (see the proof of Lemma~\ref{fact:wreqwr}).

    \emph{Step 2.} In the case $\kappa_8<\alpha_2$ put $P_2:=P_1$; otherwise, we have $\beta_2=\kappa_8+1$ and $\lambda_2=\kappa_8^+$. Set $\kappa_2:=\aleph_{\beta_1+(\alpha_2-\alpha_1)}$ and $P_2:=P_1\times\col(\kappa_2,\lambda_2)$. It is clear that $\kappa_2<\lambda_2$, so
    Lemma~\ref{lem:collapse} applies, i.e., $P_2$ forces $\covN=\kappa_2$ and that the values of the other cardinals are the same as in the $P_1$-extension. Also note that $P_1$ forces $\kappa_2=\aleph_{\alpha_2}$, and this value remains unaltered in the $P_2$-extension. Furthermore $P_2$ forces $\aleph_\xi=\aleph^V_\xi$ for any $\xi\geq\kappa_7$.

    \emph{Step 3.} In the case $\kappa_7<\alpha_3$ put $P_3:=P_2$; otherwise, set $\kappa_3:=\aleph_{\beta_2+(\alpha_3-\alpha_2)}$ and $P_3:=P_2\times\col(\kappa_3,\lambda_3)$. Note that $P_3$ forces $\bfrak=\kappa_3=\aleph_{\alpha_3}$ and that the other values are the same as forced by $P_2$. Hence, $P_3$ is as desired, e.g., $\nonM=\lambda_4=\aleph_{\beta_4}^V=\aleph_{\alpha_4}$.
\end{proof}

\begin{remark}\label{rem:and-or-equal}
   Theorem~\ref{nogapsA} also holds when $\alpha_1\leq\alpha_2\leq\alpha_3$, but depending on the equalities the hypothesis may change. For example, in the case $\alpha_1=\alpha_2<\alpha_3$, hypothesis (ii) is modified by: $\beta_1=\kappa_9+1$, $\beta_2=\kappa_8+1$, $\beta_3=\max\{\alpha_3,\kappa_7+1\}$ and both $\aleph_{\alpha_1}$ and $\kappa_3:=\aleph_{\beta_2+(\alpha_3-\alpha_2)}$ are regular. For the proof, the idea is first collapse $\lambda_2:=\aleph_{\beta_2}$ to $\kappa_1:=\aleph_{\alpha_1}$ (as in step 1 of the proof, considering similar cases for $\alpha_{\pfrak}$), and then (possibly) collapse $\lambda_3:=\aleph_{\beta_3}$ to $\kappa_3$ (as in step 3).
   This guarantees that the sequence of cardinals in the previous theorem is $\le$-consistent.
\end{remark}

A similar result (and remark about $\le$-consistency) applies to \ref{versionB}.

\begin{theorem}\label{nogapsB}
   Assume GCH and $1\leq k_0\leq\omega$.
   Let $1\leq\alpha_{\mfrak}\leq\alpha_{\pfrak}\leq\alpha_1<\alpha_2<\alpha_3\leq\alpha_4\leq\ldots\leq\alpha_9$ be ordinals and assume that there are strongly compact cardinals $\kappa_9<\kappa_8<\kappa_7<\kappa_6$ such that
   \begin{enumerate}[(i)]
       \item $\alpha_{\pfrak}\leq\kappa_9$, $\alpha_1<\kappa_8$, $\alpha_2<\kappa_7$, and $\alpha_3<\kappa_6$;
       \item for $i=1,2,3,4$, $\aleph_{\beta_{i-1}+(\alpha_i-\alpha_{i-1})}$ is regular, where $\beta_i:=\max\{\alpha_i,\kappa_{10-i}+1\}$ and $\alpha_0=\beta_0=0$;
       \item for $i\geq6$, $\aleph_{\beta_4+(\alpha_i-\alpha_4)}$ is regular;
       \item $\cof(\aleph_{\beta_4+(\alpha_5-\alpha_4)})\geq\aleph_{\beta_4}$;
       \item $\beta_3$ is \underline{not} the successor of a cardinal with countable cofinality; and
       \item $\aleph_{\alpha_{\mfrak}}$ and $\aleph_{\alpha_{\pfrak}}$ are regular.
   \end{enumerate}
   Then there is a poset that forces (1) and (2) of Lemma~\ref{11values} for $\lambda_\mfrak=\aleph_{\alpha_\mfrak}$ and
   \begin{multline*}
    \pfrak=\aleph_{\alpha_{\pfrak}},\ 
    \addN=\aleph_{\alpha_1},\ \bfrak=\aleph_{\alpha_2},\ \covN=\aleph_{\alpha_3},\ \nonM=\aleph_{\alpha_4},\\
        \covM=\aleph_{\alpha_5},\ \nonN=\aleph_{\alpha_6},\ \dfrak=\aleph_{\alpha_7},\ \cofN=\aleph_{\alpha_8},\text{\ and }\mathfrak{c}=\aleph_{\alpha_9}.
   \end{multline*}
\end{theorem}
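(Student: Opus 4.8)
The plan is to mirror the proof of Theorem~\ref{nogapsA} almost verbatim, substituting the \ref{versionB} construction (Theorem~\ref{thm:bla66}'s analogue for \ref{versionB}, i.e.\ the second theorem of that pair) for the \ref{versionA} one, and adjusting the bookkeeping of which cardinal sits where. First I would set $\beta_i:=\max\{\alpha_i,\kappa_{10-i}+1\}$ for $i=1,2,3,4$ and $\beta_i:=\beta_4+(\alpha_i-\alpha_4)$ for $i\ge 5$, and put $\lambda_{\mfrak}:=\aleph_{\alpha_{\mfrak}}$, $\lambda_{\pfrak}:=\aleph_{\alpha_{\pfrak}}$, $\lambda_i:=\aleph_{\beta_i}$ for $1\le i\le 9$. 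Hypotheses (ii)--(vi) translate exactly into: $\lambda_i$ regular for $i\neq 5$; $\cof(\lambda_5)\ge\lambda_4$; the extra $\aleph_1$-inaccessibility-type requirement on $\lambda_3$ coming from (v) (the successor-of-countable-cofinality restriction guarantees $\lambda_3=\lambda_3^{<\lambda_3}$, or more precisely that $\lambda_3$ is $\aleph_1$-inaccessible, which is what the \ref{versionB} construction needs); and $\lambda_{\mfrak},\lambda_{\pfrak}$ regular. Also $\lambda_{\mfrak}\le\lambda_{\pfrak}\le\kappa_9<\lambda_1<\kappa_8<\lambda_2<\kappa_7<\lambda_3<\kappa_6<\lambda_4\le\cdots\le\lambda_9$. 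One must double-check that the remaining cardinal-arithmetic clauses in the \ref{versionB} hypothesis of the ``Theorem~\ref{thm:bla66} analogue'' ($\lambda_2^{<\lambda_2}=\lambda_2$, $\lambda_4^{\aleph_0}=\lambda_4$, $\lambda_5^{<\lambda_4}=\lambda_5$, $\mu^{<\lambda_{\hfrak}}=\mu$, etc.) are automatic: since we start from GCH and $\kappa_j$ are $\aleph$-fixed points, each $\lambda_i$ is a successor of a cardinal of cofinality $\ge\kappa_{10-i}$ (hence large cofinality), so GCH-style computations give all these equalities.

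Next, as in step~1 of the proof of Theorem~\ref{nogapsA}, I would let $P$ be the $\lambda_{\pfrak}^+$-cc poset from the \ref{versionB}-analogue of Theorem~\ref{thm:bla66} when $\alpha_{\pfrak}<\alpha_1$, and the ccc poset from Lemma~\ref{11values} forcing $\pfrak\ge\kappa_9$ and $\mfrak(k_0\text{-Knaster})=\mfrak(\textnormal{precaliber})=\lambda_{\mfrak}$ (or just $\mfrak(k_0\text{-Knaster})\ge\kappa_9$ when $\alpha_{\mfrak}=\kappa_9$) when $\alpha_{\pfrak}=\alpha_1$. Then I would perform \emph{four} successive collapses rather than three: $\col(\kappa_1,\lambda_1)$ with $\kappa_1:=\aleph_{\alpha_1}$ (handling the $\alpha_{\pfrak}=\alpha_1$ subcase by collapsing $\lambda_1\times\kappa_1$ instead, exactly as in step~1), then $\col(\kappa_2,\lambda_2)$ with $\kappa_2:=\aleph_{\beta_1+(\alpha_2-\alpha_1)}$, then $\col(\kappa_3,\lambda_3)$ with $\kappa_3:=\aleph_{\beta_2+(\alpha_3-\alpha_2)}$, then $\col(\kappa_4,\lambda_4)$ with $\kappa_4:=\aleph_{\beta_3+(\alpha_4-\alpha_3)}$ — the last one being the new feature, needed because in \ref{versionB} there is one extra strongly compact ($\kappa_6$) and hence one extra left-side gap (between $\covN=\lambda_3$ and $\nonM=\lambda_4$) to close. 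At each step, $\kappa_i\le\lambda_i$ and $\theta_i^{<\kappa_i}=\theta_i$ hold (from GCH and the regularity hypotheses), so Lemma~\ref{lem:collapse} applies: $\kappa_i$-cc of the partial product (after step $i-1$ the product is $\theta_{i-1}^+$-cc $\le\kappa_i$-cc) plus the collapse forces $|\lambda_i|=\kappa_i$ while preserving the strong witnesses $\LCU$/$\COB$ for all Cicho\'n-characteristics (Lemma~\ref{lem:collapse}(e)), the \mlike\ values below $\kappa_i$ (parts (c),(d): so $\mfrak$, the Knaster numbers, $\pfrak$ survive), and leaving larger $\aleph$'s untouched since $\kappa_{i-1}$ is an $\aleph$-fixed point between $\lambda_i$ and $\lambda_{i+1}$.

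Finally I would read off the conclusion: after all four collapses the poset $P':=P\times\col(\kappa_1,\lambda_1)\times\col(\kappa_2,\lambda_2)\times\col(\kappa_3,\lambda_3)\times\col(\kappa_4,\lambda_4)$ (with the obvious modification in the $\alpha_{\pfrak}=\alpha_1$ case) forces $\addN=\aleph_{\alpha_1}$, $\bfrak=\aleph_{\alpha_2}$, $\covN=\aleph_{\alpha_3}$, $\nonM=\aleph_{\alpha_4}$, and since $\lambda_5,\dots,\lambda_9$ were already $\aleph_{\alpha_5},\dots,\aleph_{\alpha_9}$ (as $\beta_i=\beta_4+(\alpha_i-\alpha_4)$ and $\beta_4=\alpha_4$ after the collapse) we get $\covM=\aleph_{\alpha_5}$, $\nonN=\aleph_{\alpha_6}$, $\dfrak=\aleph_{\alpha_7}$, $\cofN=\aleph_{\alpha_8}$, $\cfrak=\aleph_{\alpha_9}$, and (1),(2) of Lemma~\ref{11values} plus $\pfrak=\aleph_{\alpha_{\pfrak}}$. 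I expect the main obstacle — or rather the only place demanding genuine care — to be verifying hypothesis (v): the clause ``$\beta_3$ is not the successor of a cardinal of countable cofinality'' must be shown to deliver exactly the $\aleph_1$-inaccessibility of $\lambda_3$ required by the \ref{versionB} construction, and one must check this property of $\lambda_3$ is not destroyed by the collapse $\col(\kappa_3,\lambda_3)$ that comes \emph{after} $P$ has already done its work (it is not, because the relevant $\LCU$/$\COB$ witnesses are already in place and Lemma~\ref{lem:collapse}(e) preserves them regardless); a secondary routine check is that the four collapses can be arranged so that their critical cardinals $\kappa_1<\theta_1=\lambda_2\le\kappa_2<\cdots$ interleave correctly, which follows from $\theta_m=\lambda_{m+1}$, $\kappa_{m+1}=\lambda_{m+1}$ in the ``gap'' cases and from the $\aleph$-fixed-point property of the $\kappa_j$'s otherwise.
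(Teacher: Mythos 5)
Your overall route is exactly the one the paper intends (the paper itself only says ``a similar result applies'' and leaves the \ref{versionB} case to the reader): take the \ref{versionB} analogue of Theorem~\ref{thm:bla66} (or the ccc poset of Lemma~\ref{11values} when $\alpha_{\pfrak}=\alpha_1$), check under GCH that the extra cardinal-arithmetic hypotheses hold -- with hypothesis (v) supplying precisely the $\aleph_1$-inaccessibility of $\lambda_3$ -- and then close the left-side gaps by products with collapses, using Lemma~\ref{lem:collapse} at each step; the fourth collapse, for the gap at $\kappa_6$ between $\covN$ and $\nonM$, is indeed the only new feature. (Your justification ``each $\lambda_i$ is a successor of a cardinal of cofinality $\ge\kappa_{10-i}$'' is not literally true -- $\lambda_3$, say, can be $\aleph_{\alpha_3}$ with $\alpha_3$ a successor of an ordinal of small cofinality, or weakly inaccessible -- but the equalities $\lambda_2^{<\lambda_2}=\lambda_2$, $\lambda_4^{\aleph_0}=\lambda_4$, $\lambda_5^{<\lambda_4}=\lambda_5$, $\mu^{<\lambda_\hfrak}=\mu$ do follow from GCH together with the stated regularity/cofinality clauses, so this is only an imprecision.)

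There is, however, one genuine gap: the theorem allows $\alpha_3=\alpha_4$, and in that case your fourth step breaks down. Indeed, then $\kappa_4=\aleph_{\beta_3+(\alpha_4-\alpha_3)}=\aleph_{\beta_3}=\lambda_3$, while after step~3 the partial product $P_3=P_2\times\col(\kappa_3,\lambda_3)$ is only $\lambda_3^+$-cc (the collapse factor has size $\lambda_3$), so your blanket claim ``after step $i-1$ the product is $\theta_{i-1}^+$-cc $\le\kappa_i$-cc'' fails for $i=4$, and Lemma~\ref{lem:collapse} (whose hypotheses need the first factor to be $\kappa$-cc, here $\lambda_3$-cc, in order to run Easton's lemma) cannot be applied to $P_3\times\col(\lambda_3,\lambda_4)$; moreover $\lambda_3$ is no longer a cardinal at that point, having just been collapsed to $\kappa_3$. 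The repair is the one described in Remark~\ref{rem:and-or-equal} for equal $\alpha$'s: when $\alpha_3=\alpha_4$ (so $\beta_4=\kappa_6+1$ and $\lambda_4=\kappa_6^+$), drop the separate third collapse and instead use the single collapse $\col(\kappa_3,\lambda_4)$ over the $\kappa_3$-cc poset $P_2$ (alternatively, group $\col(\kappa_3,\lambda_3)\times\col(\lambda_3,\lambda_4)$ into one ${<}\kappa_3$-closed, $\lambda_4^+$-cc forcing and apply the machinery of Assumption~\ref{asm:prelim} once); this collapses both $\lambda_3$ and $\lambda_4$ to $\kappa_3=\aleph_{\alpha_3}=\aleph_{\alpha_4}$ while preserving the strong witnesses and the \mlike\ values below $\kappa_3$. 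With this case added, your argument is complete and coincides with the intended proof.
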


\bibliography{morebib}
\bibliographystyle{amsalpha}
\end{document}